\documentclass[a4paper,11pt,reqno]{amsart}
\usepackage[normalem]{ulem}
\usepackage{cancel}
\usepackage{amsmath,amssymb,amsthm}
\usepackage{latexsym}
\usepackage{color}
\usepackage{graphicx}
\usepackage{mathrsfs}
\usepackage{enumerate}
\usepackage{bm}
\usepackage[abbrev]{amsrefs}
\usepackage[T1]{fontenc}
\usepackage{mathtools}
\mathtoolsset{showonlyrefs=true}
\usepackage[colorlinks,
            linkcolor=blue,      
            anchorcolor=blue,  
            citecolor=red,       
            ]{hyperref}

\allowdisplaybreaks[4]

\theoremstyle{plain}
\newtheorem{thm}{Theorem}[section]
\newtheorem{lemm}[thm]{Lemma}
\newtheorem{prop}[thm]{Proposition}

\theoremstyle{definition}
\newtheorem{df}[thm]{Definition}
\newtheorem{rem}[thm]{Remark}

\makeatletter

\@addtoreset{equation}{section}
\makeatother

\newcommand{\sgn}{\operatorname{sgn}}
\renewcommand{\div}{\operatorname{div}}
\newcommand{\dB}{\dot{B}}
\newcommand{\dH}{\dot{H}}

\newcommand{\supp}{\operatorname{supp}}

\renewcommand{\leq}{\leqslant}
\renewcommand{\geq}{\geqslant}

\newcommand{\xih}{\xi_{\rm h}}
\newcommand{\xh}{x_{\rm h}}
\newcommand{\yh}{y_{\rm h}}

\newcommand{\R}{\mathbb{R}}

\newcommand{\N}{\mathbb{N}}
\newcommand{\Z}{\mathbb{Z}}

\newcommand{\n}[1]{{\left\|#1\right\|}}

\newcommand{\abso}[1]{{\left|#1\right|}}
\newcommand{\lp}[1]{\left[#1\right]}
\newcommand{\Mp}[1]{\left\{#1\right\}}
\renewcommand{\sp}[1]{\left(#1\right)}
\newcommand{\f}{\frac}
\newcommand{\Om}{\Omega}

\begin{document}
\title[The rotating Navier--Stokes equations]
{Sharp decay estimates for global solutions to the incompressible rotating Navier--Stokes equations}
\author[M.~Fujii]{Mikihiro Fujii}
\address[M.~Fujii]{Graduate School of Science, Nagoya City University}
\email[M.~Fujii]{fujii.mikihiro@nsc.nagoya-cu.ac.jp}
\author[Y.~Li]{Yang Li}
\address[Y.~Li]{School of Mathematical Sciences, Anhui University, Hefei, 230601, People's Republic of China}
\email[Y.~Li]{lynjum@163.com}
\author[J.~Xu]{Jiang Xu}
\address[J.~Xu]{School of Mathematics, Nanjing University of Aeronautics and Astronautics, Nanjing 211106, People's Republic of China}
\email[J.~Xu]{jiangxu\underline{~}79@nuaa.edu.cn}
\keywords{rotating Navier--Stokes equations, anisotropy, dispersive estimates}
\subjclass[2020]{76D05, 35Q86, 76E07}
\begin{abstract}
In this paper, we consider the three-dimensional incompressible rotating Navier--Stokes equations and establish the sharp $L^p$ decay estimates of global solutions. We reveal that the optimal $L^p$ decay rates for $2<p<\infty$ are strictly faster than those obtained in existing results by interpolation between the $L^2$ unitary identity and $L^\infty$ dispersive estimates, although the  endpoint cases were known to be sharp. Moreover, the optimality of decay rates is also proved by the lower bound estimate for a specific initial datum.
The underlying mechanism lies in the anisotropic degeneracy of the oscillatory integrals arising from the Coriolis force.
\end{abstract}
\maketitle

\tableofcontents

\section{Introduction}
In this paper, we consider the initial value problem for the three-dimensional incompressible rotating Navier--Stokes equations: 
\begin{align}\label{eq:nonlin}
    \begin{cases}
        \partial_t u - \Delta u + \Omega e_3 \times u + (u \cdot \nabla)u + \nabla P = 0, \qquad & t>0,x \in \R^3, \\
        \div u = 0, & t \geq 0, x \in \R^3, \\
        u(0,x)=u_0(x), & x \in \R^3.
    \end{cases}
\end{align}
Here, $u=u(t,x):[0,\infty) \times \R^3 \to \R^3$ and $P=P(t,x):(0,\infty) \times \R^3 \to \R$ stand for the unknown velocity and pressure of the fluid, respectively, whereas $u_0=u_0(x):\R^3 \to \R^3$ is the given initial velocity satisfying the compatibility condition $\div u_0=0$.
For the Coriolis term $\Omega e_3 \times u$, 
the constant $\Om \in \R$ represents the angular speed of the rotation around the vertical direction $e_3=(0,0,1)$. 
The mathematical analysis of \eqref{eq:nonlin} was initiated by Babin--Mahalov--Nicolaenko \cites{Bab-Mah-Nic-97,Bab-Mah-Nic-99,Bab-Mah-Nic-01}, and has subsequently been developed in a large body of work. For the global well-posedness, we refer the readers to \cites{Iwa-Tak-13,Iwa-Tak-14,Koh-Lee-Tak-14,CDGG,CDGG2,CDGG3,Fuj-24}; for the fast rotation limit, to \cites{Ohy-Tak-21,Fuj-26,CDGG,CDGG2,CDGG3}; and for the large time behavior, to \cites{Ahn-Kim-Lee-22,Ega-Tak-23,Yos-26,Kim-22}.

Applying the Helmholtz projection $\mathbb{P}=I+\nabla (-\Delta)^{-1}\div $ to the first equation of \eqref{eq:nonlin} and using the Duhamel principle, we may rewrite \eqref{eq:nonlin} as the integral equation
\begin{align}
    u(t)
    = 
    \mathcal{T}_\Om(t)u_0
    -
    \int_0^t
    \mathcal{T}_\Om(t-\tau)\mathbb{P}\div (u(\tau) \otimes u(\tau))\, d\tau,
\end{align}
where $\{\mathcal{T}_\Om(t)\}_{t>0}$ is the semigroup generated by the Stokes--Coriolis operator $-\Delta+\Om \mathbb P e_3 \times \mathbb{P}$.
From the work of Hieber--Shibata \cite{Hie-Shi-10}, it is known that this semigroup is represented explicitly as 
\begin{align}
    \mathcal{T}_\Omega(t)u_0
    :={}&
    e^{t(\Delta - \Om \mathbb Pe_3 \times \mathbb{P})}u_0
    \\
    ={}&
    e^{t\Delta}e^{it\Omega \frac{D_3}{|D|}}P_+(D) u_0
    +
    e^{t\Delta}e^{-it\Omega \frac{D_3}{|D|}}P_-(D) u_0.
\end{align}
where the Fourier multiplier $P_\pm(D)=\mathscr{F}^{-1} P_\pm(\xi)\mathscr{F}$ is defined via 
\begin{align}\label{def-of-PR}
    P_\pm(\xi) = \frac{1}{2}\sp{I \pm iR(\xi)},
    \qquad
    R(\xi)
    :=
    \frac{1}{|\xi|}
    \begin{pmatrix}
        0 &  -  \xi_3 & \xi_2 \\
        \xi_3 & 0 & -  \xi_1 \\
        - \xi_2 & \xi_1 & 0
    \end{pmatrix}. 
\end{align}
One of the key features distinguishing the rotating Navier--Stokes equations from the classical Navier--Stokes equations is that their linearized semigroup includes the dispersive evolution group $\{e^{i\tau \frac{D_3}{|D|}}\}_{\tau \in \R}$ generated by the Coriolis force.
To see the dispersive nature, let us focus on the Fourier integral
\begin{align}
    e^{i\tau \frac{D_3}{|D|}}\varphi(x)
    =
    \frac{1}{(2\pi)^3}
    \int_{\R^3}
    e^{ix\cdot \xi}
    e^{i\tau \frac{\xi_3}{|\xi|}}
    \widehat \varphi(\xi)\, d\xi,
\end{align}
where $\varphi \in \mathscr{S}(\R^3)$ is a given frequency amplitude satisfying $\supp \widehat{\varphi} \subset \{ \xi \in \R^3\ ;\ 1/4 \leq |\xi| \leq 4\}$.
In \cites{Koh-Lee-Tak-14,Koh-Lee-Tak-14-JDE}, the following dispersive decay estimate was shown:
\begin{align}\label{L^2:disp}
    \n{e^{i\tau \frac{D_3}{|D|}}\varphi}_{L^\infty}
    \leq 
    C(1+|\tau|)^{-1},
    \qquad \tau \in \R,
\end{align}
with some positive constant $C$ depending only on $\varphi$.
We emphasize that, due to the classical Littman lemma, the decay rate $-1$ comes from the following degeneracy of the phase function
\begin{align}
    \operatorname{rank}
    \nabla^2 \sp{\frac{\xi_3}{|\xi|}} \geq 2
    \qquad \text{on } \supp \widehat{\varphi}
    .
\end{align}
We also remark that Koh--Lee--Takada \cite{Koh-Lee-Tak-14-JDE} proved the polynomial decay order $-1$ is sharp.
On the other hand, the Plancherel theorem immediately implies 
\begin{align}\label{L^infty:disp}
    \n{e^{i\tau \frac{D_3}{|D|}}\varphi}_{L^2}=\n{\varphi}_{L^2},
    \qquad \tau \in \R.
\end{align}
Then, by employing the interpolation between \eqref{L^2:disp} and \eqref{L^infty:disp}, the $L^p$ decay rates of the group $e^{i\tau \frac{D_3}{|D|}}\varphi$ is given by
\begin{align}\label{decay-inter}
    \n{e^{i\tau \frac{D_3}{|D|}}\varphi}_{L^p}
    \leq 
    C(1+|\tau|)^{-(1-\frac{2}{p})},
    \qquad \tau \in \R,\ 2 \leq p \leq \infty.
\end{align}
Taking the dispersive effect into account of the large time analysis for rotating Navier--Stokes flow,  the linear $L^1$-$L^p$ type decay estimate was deduced by \cite{Ega-Tak-23} as follows:
\begin{align}\label{decay:previ}
    \n{\nabla^m e^{t\Delta}e^{i\tau \frac{D_3}{|D|}}f}_{L^p}
    \leq 
    Ct^{-\frac{3}{2}(1-\frac{1}{p})-\frac{m}{2}}(1+|\tau|)^{-(1-\frac{2}{p})}
    \n{f}_{L^1}, \qquad t>0,\ \tau \in \R
\end{align}
for $2 \leq p \leq \infty$ and $m \in \N \cup \{0\}$.
Furthermore, it was proved by \cites{Ega-Tak-23,Yos-26} that the nonlinear solution $u$ with the initial datum in the class $u_0 \in L^1(\R^3) \cap \dH^s(\R^3)$ ($1/2<s<9/10$) 
satisfies for every $2 \leq p \leq \infty$ and $m \in \N \cup \{0\}$ that
\begin{align}
    \n{\nabla^m u(t)}_{L^p}
    =
    o\sp{t^{-\frac{3}{2}(1-\frac{1}{p})-\frac{m}{2}}(1+|\Om|t)^{-(1-\frac{2}{p})}}
\end{align}
as $t \to \infty$, and $1/2$-enhanced decay estimates
\begin{align}
    \n{\nabla^m u(t)}_{L^p}
    \leq 
    C
    t^{-\frac{3}{2}(1-\frac{1}{p})-\frac{m+1}{2}}
    (1+|\Om|t)^{-(1-\frac{2}{p})}, \qquad t>0,
\end{align}
provided that the additional condition $|x|u_0(x) \in L^1(\R^3)$ is imposed. Observe that the asymptotic expansion of Fujigaki--Miyakawa \cite{Fuj-Miya-01} type for some $2<p<3$ was established in  \cite{Ega-Tak-23}. We refer to \cites{Ahn-Kim-Lee-22,Kim-22} for the temporal decay estimates of solutions to the rotating fractional Navier--Stokes equations and rotating MHD equations.

However, it remains an open problem whether the additional decay rate $(1+|\Om|t)^{-(1-\frac{2}{p})}$ induced by the dispersive effects of the Coriolis force is optimal except for the endpoint cases $p=2,\infty$. Let us stress that the optimality when $p=2$ is classical in incompressible Navier--Stokes equations and the optimality when $p=\infty$ was verified in Koh--Lee--Takada \cite{Koh-Lee-Tak-14-JDE}. 
\emph{In this paper, we reveal that the dispersive decay rates $(1+|\Om|t)^{-(1-\frac{2}{p})}$ obtained by interpolation are in fact not optimal, and that the true optimal decay rates are strictly faster for $2<p<\infty$}.
More precisely, we prove in Section \ref{sec:lin} that for $m \in \N \cup \{0\}$, 
\begin{align}
    \n{\nabla^m
    e^{t\Delta}e^{i\tau \frac{D_3}{|D|}}f}_{L^p}
    &\leq 
    \begin{cases}
    Ct^{-\frac{3}{2}(1-\frac{1}{p})-\frac{m}{2}}
    (1+|\tau|)^{-\frac{3}{2}(1-\frac{2}{p})}\n{f}_{L^1} & (2 \leq p < 4),
    \\
    Ct^{-\frac{3}{2}(1-\frac{1}{p})-\frac{m}{2}}
    (1+|\tau|)^{-(1-\frac{1}{p})}\n{f}_{L^1} & (4 < p \leq \infty)
    \end{cases}
\end{align}
for $f \in L^1(\R^3)$, which is better than \eqref{decay:previ}.
Moreover, for the case of $p=4$, we show that
\begin{align}
    \n{\nabla^m
    e^{t\Delta}e^{i\tau \frac{D_3}{|D|}}f}_{L^4}
    \leq 
    Ct^{-\frac{3}{2}(1-\frac{1}{4})-\frac{m}{2}}
    (1+|\tau|)^{-\frac{3}{4}}(\log(e+|\tau|))^{\frac{1}{4}}\n{f}_{L^1},
\end{align}
and the logarithmic correction may be removed by using the weak Lebesgue space, that is, 
\begin{align}
    \n{\nabla^m
    e^{t\Delta}e^{i\tau \frac{D_3}{|D|}}f}_{L^{4,\infty}}
    \leq 
    Ct^{-\frac{3}{2}(1-\frac{1}{4})-\frac{m}{2}}
    (1+|\tau|)^{-\frac{3}{4}}\n{f}_{L^1}.
\end{align}
\emph{Observe that our faster decay rates are shown to be sharp since it is possible to establish the lower bound estimates}; see Theorem \ref{Thm:sharp} below for the related topic. 
This is another novelty of the current work, since as far as we know there seems to be no previous results on the lower decay estimates for incompressible rotating Navier--Stokes equations.

Applying the above linear analysis to the nonlinear solutions with critical Sobolev regularity, we obtain the first main result of this paper.
\begin{thm}\label{main-thm-1}
    Let $2\leq p\leq \infty$ and $m \in \N \cup \{0\}$.
    Let $u_0 \in \dot H^{\frac{1}{2}}(\R^3)\cap L^1(\R^3)$ satisfy $\div u_0 = 0$.
    Then, there exists a positive constant $\Omega_0=\Omega_0(u_0)$ such that
    for any $\Omega \in \R$ with $|\Omega| \geq \Omega_0$, \eqref{eq:nonlin} possesses a unique mild solution $u \in C([0,\infty);H^{\frac{1}{2}}(\R^3)) \cap C((0,\infty);L^\infty(\R^3))$ and it holds 
    \begin{align}\label{non-dec-L^p}
        \n{\nabla^m u(t)}_{L^p} = 
        \begin{cases}
            o\sp{t^{-\frac{3}{2}(1-\frac{1}{p})-\frac{m}{2}}(1+|\Omega|t)^{-\frac{3}{2}(1-\frac{2}{p})}} & (2 \leq p <4),\\
            o\sp{t^{-\frac{3}{2}(1-\frac{1}{4})-\frac{m}{2}}(1+|\Om|t)^{-\frac{3}{4}}\sp{\log(e+|\Omega|t)}^{\frac{1}{4}}} & (p = 4),\\
            o\sp{t^{-\frac{3}{2}(1-\frac{1}{p})-\frac{m}{2}}(1+|\Omega|t)^{-(1-\frac{1}{p})}} & (4<p\leq \infty)
        \end{cases}
    \end{align}
    as $t \to \infty$.
\end{thm}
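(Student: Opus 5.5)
We would split the proof into three parts: global existence for large $|\Om|$, an auxiliary decay/smallness bound, and then a Duhamel argument that upgrades to the sharp $o(\cdot)$ rates using the linear estimates announced above (to be proved in Section \ref{sec:lin}). To streamline notation, write
\begin{align}
    D_p(t):=
    \begin{cases}
    (1+|\Om|t)^{-\frac32(1-\frac2p)} & (2\le p<4),\\
    (1+|\Om|t)^{-\frac34}(\log(e+|\Om|t))^{\frac14} & (p=4),\\
    (1+|\Om|t)^{-(1-\frac1p)} & (4<p\le\infty).
    \end{cases}
\end{align}

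\textbf{Step 1: global existence.} For existence and uniqueness we invoke the known large-rotation global well-posedness theory in $\dH^{\frac12}$ (Iwabuchi--Takada, Koh--Lee--Takada). The mechanism is that Strichartz-type estimates for $\mathcal T_\Om$ make $\n{\mathcal T_\Om(t)u_0}_{L^r_tL^q_x}$ small once $|\Om|\ge\Om_0(u_0)$. A contraction argument in a Chemin--Lerner type space then gives the solution. Since $L^1\cap\dH^{1/2}\subset L^2$ by interpolation through $\dH^{-3/2}$-type bounds, and energy estimates are available, we get $u\in C([0,\infty);H^{1/2})$. Parabolic smoothing gives $C((0,\infty);L^\infty)$ and bounds on $\n{\nabla^m u(t)}_{L^\infty}$ for $t>0$.

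\textbf{Step 2: auxiliary decay and smallness.} We next establish
\begin{align}
    \n{u(t)}_{L^2}\le C(1+t)^{-\frac34},\qquad \n{\nabla^m u(t)}_{L^p}\le Ct^{-\frac32(1-\frac1p)-\frac m2}
\end{align}
for large $t$, together with $\n{u(t)}_{L^2}=o(t^{-3/4})$ in the sense of the $L^1$-type smallness used below.
\begin{itemize}
\item The $L^2$ rate comes from a Fourier splitting argument using $|\widehat{u}(t,\xi)|\le C(1+t|\xi|)$. The rotation term is skew, so it disappears from the energy identity, and $L^1$ control of $u\otimes u$ suffices.
\item The $o$-version comes from approximating $u_0$ in $L^1$ by Schwartz data with $\int\widehat{\ }=0$-type moment conditions. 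Alternatively, in the Duhamel formula we split $f=\mathbb P\div(u\otimes u)$ in time as $\int_0^{t/2}+\int_{t/2}^t$ and use density.
\end{itemize}

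\textbf{Step 3: the Duhamel estimate.} Apply the linear estimate with $\tau=\pm\Om t$ to $\nabla^m\mathcal T_\Om(t)u_0$; since $P_\pm(D)$ is a zeroth-order multiplier, we handle it by writing $e^{t\Delta}=e^{\frac t2\Delta}e^{\frac t2\Delta}$ and absorbing the multiplier into the $L^1$ heat kernel $\nabla^m P_\pm e^{\frac t2\Delta}$. Density then converts the bound into $o(\cdot)$: approximate $u_0$ in $L^1$ by $\phi$ with $\widehat\phi$ supported away from $0$ and $\infty$, where a gradient gains an extra $t^{-1/2}$.

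For the nonlinear term we split the time integral.
\begin{itemize}
\item On $[0,t/2]$, place the derivative on the kernel and use the linear estimate with $\n{u\otimes u}_{L^1}\le\n{u}_{L^2}^2$. This gives
\begin{align}
    \int_0^{t/2}(t-s)^{-\frac32(1-\frac1p)-\frac{m+1}2}D_p(t-s)\n{u(s)}_{L^2}^2\,ds\lesssim t^{-\frac32(1-\frac1p)-\frac m2}D_p(t)\cdot t^{-\frac12}\int_0^{t/2}\n{u}_{L^2}^2\,ds.
\end{align}
Since $\n{u}_{L^2}^2\lesssim(1+s)^{-3/2}$ is integrable, this contribution is $o$ of the target thanks to the factor $t^{-1/2}$.
\item On $[t/2,t]$, the dispersive factor $D_p(t-s)$ is not small when $t-s$ is small, which is the main obstacle. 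Here we instead use $L^1$--$L^p$ bounds without dispersion but with the nonlinear decay $\n{u\otimes u}_{L^1}\lesssim s^{-3/2}$ (or $L^q$ variants). We interpolate with Step 2 bounds in $L^\infty$, combined with $\n{u(s)}_{L^p}$ estimates of the target form through a bootstrap. That is, we define $M(t)=\sup_{s\le t}s^{\frac32(1-\frac1p)}D_p(s)^{-1}\n{u(s)}_{L^p}$ and show $M(t)\le C+\varepsilon M(t)$ using smallness from large $|\Om|$. The Strichartz smallness from Step 1 should provide this $\varepsilon$.
\end{itemize}

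Finally, higher $m$ follows by the same splitting with derivatives distributed via parabolic smoothing.
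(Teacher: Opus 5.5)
There is a genuine gap in your treatment of $\int_{t/2}^t$, which is the one place where the nonlinear argument needs an idea, and the obstacle you name there is not the real one.

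For $2\le p<\infty$, the lack of dispersion for $s$ near $t$ costs nothing. The paper keeps the factor $\mathcal{D}_p(|\Om|(t-s))$ and integrates it. Since $\mathcal{D}_p(\sigma)$ decays like $\sigma^{-\gamma_p}$ with $\gamma_p<1$ (up to a logarithm at $p=4$), one has $\int_0^{t/2}\mathcal{D}_p(|\Om|\sigma)\,d\sigma\le Ct\,\mathcal{D}_p(|\Om|t)$ by Lemma~\ref{lemm:elem}. The singularity at $s=t$ is removed by replacing the $L^1$--$L^p$ semigroup bound with the frequency-localized estimate of Proposition~\ref{prop:dips-1}, with all derivatives placed on the nonlinearity: $\n{\nabla^m\mathcal{T}_\Om(t-s)\mathbb{P}\div F}_{L^p}\lesssim\mathcal{D}_p(|\Om|(t-s))\n{F}_{\dB^{m+1+3(1-\frac1p)}_{1,1}}$. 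For $F=u\otimes u$, Lemmas~\ref{lemm:prod-est} and \ref{lemm:interp} bound this by $\n{u}_{L^2}\n{\nabla^{m+1}u}_{L^2}^{1/p}\n{\nabla^{m+4}u}_{L^2}^{1-1/p}$, which decays fast by Lemma~\ref{lemm:GWP}. No bootstrap and no smallness are used.

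The real difficulty is $p=\infty$, which your plan does not single out. There $\gamma_\infty=1$ and the time integral is logarithmic. The paper instead works in $\dB^{m+3/2}_{2,1}$, where the dispersive group is unitary, and takes the dispersion from the nonlinearity via the already established $L^4$ rates: $\n{u}_{L^4}\n{u}_{\dB^{m+3/2}_{4,\infty}}\lesssim\tau^{-3-\frac m2}\mathcal{D}_4(|\Om|\tau)^2$, with $\mathcal{D}_4^2\lesssim\mathcal{D}_\infty$.

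The replacement you sketch does not close as written, for three reasons. First, the non-dispersive $L^1$--$L^p$ bound leads to $\int_{t/2}^t(t-s)^{-\frac32(1-\frac1p)-\frac{m+1}{2}}\n{u\otimes u}_{L^1}\,ds$. The exponent is at least $5/4$, so this diverges at $s=t$ for every $p\ge2$.

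Second, the ``$L^q$ variants'' are constrained, because $\mathcal{T}_\Om(t-s)$ is bounded uniformly in $\Om$ only from $L^q$ to $L^p$ with $q\le 2\le p$. For $p>2$, take $h=e^{i\tau\frac{D_3}{|D|}}\varphi$ and use $\n{h}_{L^2}^2\le\n{h}_{L^{p'}}\n{h}_{L^p}$, Lemma~\ref{lemm:dis-decay} and duality. This shows that $e^{i\tau\frac{D_3}{|D|}}$ has $L^p\to L^p$ norm $\gtrsim\mathcal{D}_p(\tau)^{-1}$ on every dyadic shell, since the phase is homogeneous of degree zero. So the natural step $\n{u\otimes u}_{L^p}\le\n{u}_{L^\infty}\n{u}_{L^p}$ that would feed your $M(t)$ forfeits the dispersive gain $M$ was meant to supply. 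The loss also grows with $|\Om|$, which is the opposite of what your $\varepsilon$ needs.

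Third, Strichartz smallness does not provide that $\varepsilon$. It controls a space--time Lebesgue norm, not a pointwise-in-time norm weighted by $\mathcal{D}_p(|\Om|s)^{-1}$. If you do want a bootstrap (it is unnecessary for $p<\infty$), the small parameter is time, not rotation. With $L^2$-based kernel bounds and $\n{u\otimes u}_{\dB^{\sigma}_{2,\infty}}\lesssim\n{u}_{L^\infty}\n{u}_{\dB^{\sigma}_{2,\infty}}$, the $[t/2,t]$ piece at $p=\infty$ is $O(t^{-1})$ relative to the target, times $\sup_{[t/2,t]}s^{3/2}\mathcal{D}_\infty(|\Om|s)^{-1}\n{u(s)}_{L^\infty}$.

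Two smaller points concern the linear term. Your density argument needs $\int u_0=0$, which follows from $\div u_0=0$ and $u_0\in L^1$; functions with $\widehat\phi$ vanishing near $0$ are dense only in mean-zero $L^1$. Also, for $m=0$ the kernel of $P_\pm(D)e^{\frac t2\Delta}$ is not integrable, since its symbol is discontinuous at $\xi=0$. Use the $\dB^0_{1,\infty}$ form of Proposition~\ref{prop:I-decay} instead.
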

As for the classical Navier--Stokes equations, the small order decay rate may be improved to the additional $t^{-\frac{1}{2}}$-faster decay estimate, provided that $|x|u_0(x)$ is integrable.
In the following theorem, we state not only this but also removal of the logarithmic correction for $p=4$.
\begin{thm}\label{thm:main-2}
    Let $u_0 \in \dot H^{\frac{1}{2}}(\R^3)\cap L^1(\R^3)$ satisfy $\div u_0 = 0$ and additionally assume $|x|u_0(x) \in L^1(\R^3_x)$.
    Then, for every $2 \leq p \leq \infty$ and $m \in \N \cup \{0\}$, 
    there exists a positive constant $C=C(m,p,u_0)$ such that the solution $u$ to \eqref{eq:nonlin} with $|\Omega| \geq \Omega_0$, constructed in Theorem \ref{main-thm-1}, satisfies the $1/2$-enhanced decay estimates
    \begin{align}\label{enh-non-dec-L^p} 
        &
        \n{\nabla^mu(t)}_{L^p}
        \leq
        \begin{cases}
            Ct^{-\frac{3}{2}(1-\frac{1}{p})-\frac{m+1}{2}}(1+|\Omega|t)^{-\frac{3}{2}(1-\frac{2}{p})} & (2 \leq p <4),\\
            Ct^{-\frac{3}{2}(1-\frac{1}{4})-\frac{m+1}{2}}(1+|\Om|t)^{-\frac{3}{4}}\sp{\log(e+|\Omega|t)}^{\frac{1}{4}} & (p = 4),\\
            Ct^{-\frac{3}{2}(1-\frac{1}{p})-\frac{m+1}{2}}(1+|\Omega|t)^{-(1-\frac{1}{p})} & (4<p\leq \infty)\\
        \end{cases}
    \end{align}
    for all $t \geq 1$. Furthermore, the logarithmic correction for $p=4$ may be removed in $L^{4,\infty}$ framework:
    \begin{align}\label{corrc-4}
        \n{\nabla^mu(t)}_{L^{4,\infty}}
        \leq 
        C
        t^{-\frac{3}{2}(1-\frac{1}{4})-\frac{m+1}{2}}
        (1+|\Om|t)^{-\frac{3}{4}}.
    \end{align}
\end{thm}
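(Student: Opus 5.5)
We follow the standard Duhamel argument for $1/2$-enhanced decay of Navier--Stokes flows (Fujigaki--Miyakawa type), feeding in the sharp linear estimates announced above and proved in Section \ref{sec:lin}. Write
\[
u(t)=\mathcal T_\Om(t)u_0-\int_0^t \mathcal T_\Om(t-\tau)\mathbb P\div(u\otimes u)(\tau)\,d\tau .
\]
Set $\omega_p(t)$ for the dispersive factor: $(1+|\Om|t)^{-\frac32(1-\frac2p)}$ if $p<4$, $(1+|\Om|t)^{-\frac34}(\log(e+|\Om|t))^{\frac14}$ if $p=4$, and $(1+|\Om|t)^{-(1-\frac1p)}$ if $p>4$. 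For $p=4$ in $L^{4,\infty}$ we use $(1+|\Om|t)^{-3/4}$.

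\textbf{Linear part.} Since $\div u_0=0$ and $u_0\in L^1$, we have $\widehat{u_0}(0)=0$. With $|x|u_0\in L^1$ this gives $u_0=\div G$ (componentwise), where $G=-\int_0^1 \ldots$ is the standard moment representation with $\|G\|_{L^1}\lesssim \||x|u_0\|_{L^1}$. Then $\nabla^m\mathcal T_\Om(t)u_0=\nabla^{m+1}e^{t\Delta}e^{\pm it\Om D_3/|D|}P_\pm(D)G$. The projections $P_\pm(D)$ are zeroth order. We handle them either by absorbing them into the Littlewood--Paley pieces used in the linear estimates, or by writing $e^{t\Delta}=e^{\frac t2\Delta}e^{\frac t2\Delta}$ and applying $P_\pm$ to the heat-smoothed $L^1$ function via a Hardy-space/Besov argument. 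The $L^1$--$L^p$ estimate of Section \ref{sec:lin} with $m+1$ derivatives then yields $Ct^{-\frac32(1-\frac1p)-\frac{m+1}2}\omega_p(t)$. In $L^{4,\infty}$ the weak-type linear estimate is used instead.

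\textbf{Nonlinear part.} From Theorem \ref{main-thm-1} and the underlying construction we have the a priori bounds $\|u(t)\|_{L^2}\lesssim t^{-3/4}$ and $\|u(t)\|_{L^\infty}\lesssim t^{-3/2}(1+|\Om|t)^{-1}$, and hence $\|u\otimes u(\tau)\|_{L^1}\lesssim (1+\tau)^{-3/2}$. We first bootstrap these to the enhanced rate $\|u(\tau)\|_{L^2}\lesssim \tau^{-5/4}$, which is the classical step using the $|x|u_0$ moment. We split the integral at $t/2$.

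On $[0,t/2]$ we put all derivatives on the kernel and apply the linear estimate with $m+1$ derivatives in $L^1$:
\[
\int_0^{t/2}(t-\tau)^{-\frac32(1-\frac1p)-\frac{m+1}2}\,\omega_p(t-\tau)\,\|u\otimes u(\tau)\|_{L^1}\,d\tau .
\]
Since $t-\tau\sim t$ and $\int_0^\infty\|u\otimes u\|_{L^1}\,d\tau<\infty$ (by the $\tau^{-3/2}$ decay for $\tau\ge1$ together with the $H^{1/2}$ bound near $0$), this gives the desired rate.

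On $[t/2,t]$ we keep a single derivative on the kernel, or use the heat smoothing alone with $e^{(t-\tau)\Delta}$ bounded from $L^q$ to $L^p$. We put $m$ derivatives on $u\otimes u$ and use the already-established decay of $\nabla^k u$ at time $\tau\sim t$. Here $\|\nabla^m(u\otimes u)\|_{L^p}$ decays like $t^{-\frac32(1-\frac1p)-\frac m2-\frac32}(1+|\Om| t)^{-1}\omega_p$ or better, and the gain of $t^{-3/2}$ comfortably exceeds the needed $t^{-1/2}$ after integrating $(t-\tau)^{-1/2}$ over an interval of length $t/2$. To make this rigorous we run the estimate inductively in $m$, or use a continuity argument on $\sup_{t\ge1} t^{\frac32(1-\frac1p)+\frac{m+1}2}\omega_p(t)^{-1}\|\nabla^m u(t)\|_{L^p}$, with $t\ge1$ so that no local singularities arise.

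\textbf{Main obstacle.} The delicate point is the near-time region $[t/2,t]$ when $m$ is large and $p=\infty$. There the semigroup factor $(t-\tau)^{-1/2}$ must be integrable, while the dispersive factor of the kernel is not available, so the full $\omega_p$ decay has to come from the solution itself. We try first to establish higher-derivative $L^\infty$ bounds for $u$ with the full dispersive factor by parabolic smoothing on unit time intervals. If that fails, we use the $L^q$--$L^p$ version of the linear estimate, with $\omega_p$ replaced by the corresponding factor, for some intermediate $q$. The $L^{4,\infty}$ statement follows the same way: the linear weak-type bound handles the first two pieces, and for the near-time piece we bound $L^4\subset L^{4,\infty}$ directly, since there the quadratic term already supplies an extra $(1+|\Om|t)^{-1}$.
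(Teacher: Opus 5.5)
Your linear part and your far-time piece $\int_0^{t/2}$ are fine and coincide with the paper. (Writing $u_0=\div G$ with $\|G\|_{L^1}\le\||x|u_0\|_{L^1}$ is equivalent to the paper's mean-value argument based on $\int u_0=0$, and $P_\pm(D)$ is harmless in $\dot B^0_{1,\infty}$.) The gap is the near-time piece $\int_{t/2}^t$, where you use
\[
\|\mathcal T_\Om(t-\tau)\mathbb P\div\nabla^m(u\otimes u)(\tau)\|_{L^p}\lesssim (t-\tau)^{-\frac12}\|\nabla^m(u\otimes u)(\tau)\|_{L^p},
\]
i.e.\ you treat $\mathcal T_\Om$ like the heat semigroup on $L^p$. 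For $p\neq2$ this fails uniformly in $|\Om|(t-\tau)$, which ranges up to $|\Om|t/2$. Take $k_S:=\mathscr F^{-1}[e^{-|\xi|^2}e^{iS\xi_3/|\xi|}]$. Then $\|k_S\|_{L^2}=c>0$ and $\|k_S\|_{L^\infty}\lesssim(1+S)^{-1}$, hence $\|k_S\|_{L^1}\ge\|k_S\|_{L^2}^2/\|k_S\|_{L^\infty}\gtrsim 1+S$. By scaling, the $L^\infty\to L^\infty$ norm of $e^{s\Delta}e^{i\Om sD_3/|D|}$ is therefore $\gtrsim 1+|\Om|s$, and the same interpolation argument shows the $L^q$ operator norms are unbounded for every $q\neq 2$. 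Moreover, $\mathbb P$ and $P_\pm(D)$ are not bounded on $L^\infty$. Your ``main obstacle'' paragraph senses this at $p=\infty$ but leaves it open (``try first \ldots; if that fails \ldots''). The same defect affects every $2<p<\infty$, including the $L^4$ bound you invoke for the near-time part of the $L^{4,\infty}$ estimate.

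The missing idea is that the rotating group may act only on an $L^1$-type input, where Proposition \ref{prop:dips-1} turns it into decay, or on an $L^2$-based space, where it is unitary. The paper takes the first option for $2\le p<\infty$, even on $[t/2,t]$:
\[
\|\nabla^m\overline{\mathcal N}_\Om[u](t)\|_{L^p}\lesssim\int_{t/2}^t\mathcal D_p(|\Om|(t-\tau))\,\|u(\tau)\|_{L^2}\|u(\tau)\|_{\dot B^{m+1+3(1-\frac1p)}_{2,1}}\,d\tau .
\]
Here only the energy decay of $u$ is needed, so no bootstrap is required. The dispersive factor comes from the kernel via Lemma \ref{lemm:elem}, $\int_0^{t/2}\mathcal D_p(|\Om|s)\,ds\lesssim t\,\mathcal D_p(|\Om|t)$, which holds exactly for $p<\infty$. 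The $L^{4,\infty}$ bound is obtained the same way from the weak-type part of Proposition \ref{prop:dips-1}. For $p=\infty$ that integral only gives a logarithm, so the paper instead measures the Duhamel term in $\dot B^{m+3/2}_{2,1}$. It uses $\|u\otimes u\|_{\dot B^{m+3/2}_{2,\infty}}\lesssim\|u\|_{L^4}\|u\|_{\dot B^{m+3/2}_{4,\infty}}$ together with the $L^4$ bounds just proved, since $\mathcal D_4(|\Om|\tau)^2\lesssim\mathcal D_\infty(|\Om|\tau)$.

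Within your own setup, a workable repair is the $L^2$ route for all $p$. Bound the near-time term in $\dot B^{m+3(\frac12-\frac1p)}_{2,1}$ and use $\|u\otimes u\|_{\dot B^s_{2,\infty}}\lesssim\|u\|_{L^\infty}\|u\|_{\dot B^s_{2,\infty}}$. Then apply the bound $\|u(\tau)\|_{L^\infty}\lesssim\tau^{-3/2}(1+|\Om|\tau)^{-1}$ that you quote, with constants depending on the solution, and note that $(1+|\Om|t)^{-1}\le\mathcal D_p(|\Om|t)$. As written, however, your near-time estimate does not hold.
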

\begin{rem}
    For the threshold case $p=4$, the strength of logarithmic correction is actually controlled by the interpolation in Lorentz spaces:
    \begin{align}
        \n{u(t)}_{L^{4,q}}
        \leq 
        Ct^{-\frac{3}{2}(1-\frac{1}{4})-\frac{m+1}{2}}
        (1+|\Om|t)^{-\frac{3}{4}}
        \sp{\log(e+|\Omega|t)}^{\frac{1}{q}},
        \qquad
        1 \leq q \leq \infty,
    \end{align}
    which can be easily obtained by slightly modifying the linear analysis.
\end{rem}
Next, we claim that the decay estimates \eqref{enh-non-dec-L^p} and \eqref{corrc-4} are optimal.
\begin{thm}\label{Thm:sharp}
    Let $\Omega = 1$ and $2\leq p \leq \infty$.
    Then, there exist $u_0 \in \mathscr{S}(\R^3)$ with $\div u_0=0$ and positive constants $c=c(p)$, $T=T(p)$ such that the associated mild solution $u$ to \eqref{eq:nonlin} satisfies 
    \begin{align}
        &
        \n{u(t)}_{L^p}
        \geq 
        \begin{cases}
            ct^{-\frac{3}{2}(1-\frac{1}{p})-\frac{1}{2}}(1+t)^{-\frac{3}{2}(1-\frac{2}{p})} & (2 \leq p <4),\\
            ct^{-\frac{3}{2}(1-\frac{1}{4})-\frac{1}{2}}(1+t)^{-\frac{3}{4}}\sp{\log(e+t)}^{\frac{1}{4}} & (p = 4),\\
            ct^{-\frac{3}{2}(1-\frac{1}{p})-\frac{1}{2}}(1+t)^{-(1-\frac{1}{p})} & (4<p\leq \infty),\\
        \end{cases}
        \\
        &
        \n{u(t)}_{L^{4,\infty}}
        \geq 
        c
        t^{-\frac{3}{2}(1-\frac{1}{4})-\frac{1}{2}}
        (1+t)^{-\frac{3}{4}}
    \end{align}
    for all $t \geq T$. 
\end{thm}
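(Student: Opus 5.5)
We want lower bounds for a specific datum with the extra $t^{-1/2}$ rate. The plan is to choose $u_0 = \operatorname{curl}\psi$ or $u_0=\partial_j\phi$-type data with $\int u_0\,dx=0$ but nonzero first moment, so that the leading term is a derivative of the heat--Coriolis kernel. Then the linear part dominates the Duhamel term at large time, by a quantitative margin. Concretely, take $u_0 = \varepsilon\, \operatorname{curl}(0,0,g)$ or similar with $g$ a Gaussian, scaled small so that the solution exists globally for $\Omega=1$ (small data in $\dot H^{1/2}$, no need for large rotation).

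\textbf{Step 1 (asymptotic profile).} Write $\mathcal{T}_1(t)u_0 = \nabla\cdot(\text{kernel})\ast(\text{moment}) + $ remainder. Taylor expanding $\widehat{u_0}(\xi)=i\xi\cdot M+O(|\xi|^2)$ with $M=-\int x\otimes u_0$, the main term is $\sum_\pm e^{t\Delta}e^{\pm it D_3/|D|}P_\pm(D)(\nabla\cdot M)\delta$. The remainder has decay faster by $t^{-1/2}$ via the linear upper estimates of Section~\ref{sec:lin}, applied with an extra derivative and a second-moment (Schwartz) datum.

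\textbf{Step 2 (nonlinear term).} Estimate the Duhamel term using the upper bounds of Theorem~\ref{thm:main-2} for $u$ together with the linear $L^1$--$L^p$ bounds. Since $\mathbb{P}\div(u\otimes u)$ has one derivative and $\|u\otimes u\|_{L^1}\lesssim \|u\|_{L^2}^2 \lesssim t^{-3/2-1}$, the contribution decays like $t^{-\frac32(1-\frac1p)-\frac12}(1+t)^{-\alpha_p}$ times an extra $t^{-\delta}$ (or $\log$-loss at worst) after splitting $\int_0^{t/2}+\int_{t/2}^t$. Alternatively, and more safely, use the factor $\varepsilon^2$ versus $\varepsilon$ to make it smaller than the linear main term for all large $t$.

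\textbf{Step 3 (lower bound of the profile), the main obstacle.} After rescaling $\xi\mapsto\xi/\sqrt t$, the main term at time $t$ is $t^{-\frac32(1-\frac1p)-\frac12}$ times $\|\nabla e^{\Delta}e^{i\tau D_3/|D|}P_\pm\cdot M\|_{L^p}$ with $\tau=t$. So it suffices to prove a sharp lower bound on the $L^p$ norm of the dispersive oscillatory integral $I(\tau,x)=\int e^{ix\xi+i\tau\xi_3/|\xi|}a(\xi)\,d\xi$ with Gaussian amplitude. I would use stationary phase. Stationary points satisfy $x=-\tau\nabla(\xi_3/|\xi|)$. The degenerate set is near the equator $\xi_3=0$, and rotational symmetry in $\xi_h$ gives a torus-like caustic. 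There one expects $|I|\sim \tau^{-1}\cdot(\text{distance factor})^{-1/2}$. Specifically, for $|x_3|\approx\tau/|\xi|$ and $|x_h|\lesssim\tau$, one expects $|I(\tau,x)|\gtrsim \tau^{-3/2}(1+\tau^{-1}\,\operatorname{dist})^{\ldots}$ with a region where $|I|\sim \tau^{-1}\langle |x_h|\rangle^{-1/2}$. Integrating $|I|^p$ over $|x_h|\le\tau$ and $|x_3|\sim\tau$ gives $\tau\int_1^\tau r^{1-p/2}\tau^{-p}\,dr$. This produces exactly $\tau^{-\frac32(p-2)}$ for $p<4$, $\tau^{-p+1}\log\tau$ for $p=4$, and $\tau^{1-p}$ for $p>4$, matching the three regimes. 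The $L^{4,\infty}$ bound follows by measuring the level set $\{|I|>\lambda\}$ with $\lambda=\tau^{-3/2}$ on the dyadic shell $r\sim\tau$.

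Carrying this out rigorously requires asymptotics uniform in $x$ on an open set, and checking that the amplitude $\xi\cdot M$ times $P_\pm$ does not vanish on the relevant stationary points. I would fix $M$ so that this amplitude is nonzero near the equatorial circle, and avoid cancellation between the $\pm$ branches by working in a region of $x_3$ of fixed sign, where only one branch is stationary. Finally, for $t\ge T$, combine: lower bound of the main term minus the (smaller) remainder and nonlinear errors.
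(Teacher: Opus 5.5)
The architecture of your plan matches the paper's. Your datum $\varepsilon\operatorname{curl}(0,0,e^{-|x|^2/4})=-\tfrac{\varepsilon}{2}e^{-|x|^2/4}x^\perp$ is, up to a constant, exactly the paper's. After rescaling, everything reduces to lower bounds for an oscillatory integral with phase $t(y\cdot\xi\pm\Psi(\xi))$, $y=x/\tau$, and Gaussian amplitude. Your assignment of regions to rates is also right: $|x_{\rm h}|\sim\tau$ for $p<4$ and $L^{4,\infty}$, $|x_{\rm h}|\lesssim1$ for $p>4$, all intermediate scales for $p=4$.

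The genuine gap is Step~3 for $L^p$ with $p\geq4$. Near the $x_3$-axis (the caustic, i.e.\ the image of the plane $\xi_3=0$ under $\xi\mapsto\mp\tau\nabla\Psi(\xi)$), the two stationary points merge into a circle on which $\det\nabla^2\Psi=0$. Indeed $|\det\nabla^2\Psi(\Xi(y))|=|y_{\rm h}||y|^9/y_3^4\to0$, so Lemma~\ref{lemm:asy} does not apply uniformly, and ``one expects $|I|\sim\tau^{-1}\langle|x_{\rm h}|\rangle^{-1/2}$'' is not an argument. Your computation $\tau\int_1^\tau r^{1-p/2}\tau^{-p}\,dr$ uses this as a two-sided pointwise bound uniformly across the whole transition range $1\leq|x_{\rm h}|\leq\tau$. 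That would require a uniform caustic expansion you do not provide. Pointwise lower bounds are also the wrong target: the vertical component, the one the paper can compute, has leading term a multiple of $\tau^{-1}J_0(\tau|x_{\rm h}|/x_3)$, which has zeros.

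The missing idea is how to compute this regime. The paper uses the rotational symmetry: integrating out the horizontal angle gives $U_3=\frac{1}{2(2\pi)^2}(I^+-I^-)$, with $I^\pm=\int_0^\infty r^3e^{-r^2}H(tr|y_{\rm h}|,t(ry_3\pm1))\,dr$ and $H(\rho,\cdot)$ the Fourier transform of $(1-\zeta^2)J_0(\rho\sqrt{1-\zeta^2})\mathbf{1}_{[-1,1]}(\zeta)$.

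\begin{itemize}
\item Integration by parts gives $|H|\lesssim(1+\rho^2)/(1+\tau^2)$ and $|\partial_\rho H|\lesssim(|\rho|+\rho^2)/(1+\tau^2)$.
\item After substituting $\tau=t(ry_3-1)$, the Fourier inversion identity $\int_{\mathbb{R}}H(\rho,\tau)\,d\tau=2\pi J_0(\rho)$ yields, for $1\leq y_3\leq2$, $I^-=\frac{2\pi}{ty_3^4}e^{-1/y_3^2}J_0(t|y_{\rm h}|/y_3)+O\big((1+t^2|y_{\rm h}|^2)t^{-2}\log t+|y_{\rm h}|^2(1+t|y_{\rm h}|)\log t\big)$, while $I^+$ is negligible.
\item This is usable only for $|y_{\rm h}|\leq t^{-7/8}$, i.e.\ $|x_{\rm h}|\lesssim\tau^{1/8}$, but that suffices. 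For $p>4$, take $|y_{\rm h}|\leq\delta/t$, where $|J_0|\geq\frac12$.
\item For $p=4$, the Bessel asymptotics together with $\cos^4\theta=\frac{3+4\cos2\theta+\cos4\theta}{8}$ give $\int_1^{ct^{1/8}}|J_0(\rho)|^4\rho\,d\rho\gtrsim\log t$. So the logarithm comes from $t^{1/8}$ worth of scales and from averaging the oscillations, not from a pointwise bound.
\end{itemize}

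Two smaller points. First, in Step~2 only your second option works. The $\int_0^{t/2}$ part of the Duhamel term is bounded by $Ct^{-\frac32(1-\frac1p)-\frac12}\mathcal{D}_p(t)\int_0^\infty\|u\|_{L^2}^2\,d\tau$, the same order as the linear term. This cannot be improved in general: its leading part has the form of your linear main term, with the first moment of $u_0$ replaced by $\int_0^\infty\!\int u\otimes u\,dx\,d\tau$. So the $\varepsilon^2$ versus $\varepsilon$ comparison is necessary, not merely safer. Second, in the nondegenerate region, fixing the sign of $x_3$ selects one branch, but that branch still has two stationary points $\pm\Xi(y)$ (because $\nabla\Psi$ is even), and they interfere. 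The paper handles this by computing $V^++V^-$ explicitly: it is a positive factor times $\sin\Theta\,y^\perp/|y_{\rm h}|+\cos\Theta\,y/|y|$, a unit vector for every phase $\Theta$.
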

\begin{rem}
    In Theorem \ref{Thm:sharp}, the initial datum 
    is chosen explicitly by 
    \begin{align}
        u_0(x):=\varepsilon e^{-\frac{|x|^2}{4}}(x_2,-x_1,0),
    \end{align}
    where $\varepsilon>0$ is chosen sufficiently small; see Theorem \ref{Thm:sharp-lin}.
\end{rem}

We briefly explain why the $L^p$ decay estimate \eqref{decay-inter} derived by the interpolation does not capture the true behavior for
$2<p<\infty$.  
As mentioned above, the oscillatory decay rate is controlled by the degeneracy of the phase $\xi_3/|\xi|$ and it is measured by the Hessian
\[
h(\xi):=\det\nabla_\xi^2\sp{\frac{\xi_3}{|\xi|}}
=\frac{|\xi_h|^2\xi_3}{|\xi|^9}.
\]
Along this function, we decompose the oscillatory kernel into
dyadic degeneracy layers
\begin{align}
    e^{i\tau \frac{D_3}{|D|}}\varphi=\sum_{k\leq k_0}I_k(\tau),
    \qquad
    I_k(\tau,x)
    =
    \frac{1}{(2\pi)^3}
    \int_{|h(\xi)|\simeq2^k}
    e^{ix \cdot \xi}
    e^{i\tau \frac{\xi_3}{|\xi|}}
    \widehat \varphi (\xi)\, d\xi.
\end{align}
By the estimate
\begin{align*}
\left|
\Mp{\xi \in \supp \widehat \varphi\ ;\ |h(\xi)|\simeq 2^k}
\right| 
\simeq
\left|
\Mp{\xi \in \supp \widehat \varphi\ ;\ |\xih|^2 |\xi_3|\simeq 2^k}
\right| 
\simeq 2^k,
\end{align*} 
and the standard stationary phase estimate, it is naturally expected that the following estimate holds:
\begin{align}
    \|I_k(\tau)\|_{L^\infty}
    \leq 
    C\min\Mp{2^k,\,|\tau|^{-\frac{3}{2}}2^{-\frac{k}{2}}},
    \qquad
    \|I_k(\tau)\|_{L^2}\leq C 2^{k/2}.
\end{align}
Interpolating these estimates yields
\begin{align}\label{I_k-L^p}
\|I_k(\tau)\|_{L^p}
\leq 
C
\min\left\{
2^{(1-\frac{1}{p})k},\,
|\tau|^{-\frac32(1-\frac2p)}
2^{(\frac2p-\frac12)k}
\right\},
\qquad 
2\leq p\leq\infty.
\end{align}
The two bounds balance at $2^k\simeq|\tau|^{-1}$.  The global
$L^\infty$ estimate is therefore governed by this transition layer and
has size $|\tau|^{-1}$.  Interpolating only this worst $L^\infty$ bound, which provides \eqref{L^2:disp},
with the conserved $L^2$ norm \eqref{L^infty:disp}, we treat the oscillatory integral as if it were as degenerate as the transition layer throughout the whole annulus $\supp \widehat \varphi$, and this is precisely why the known decay rate
$|\tau|^{-(1-\frac{2}{p})}$ in \eqref{decay-inter} is not sharp.
For $2\leq p<4$, the exponent $2/p-1/2$ in \eqref{I_k-L^p} is positive, so the sum over the
less degenerate layers is dominated by the nondegenerate part of the
annulus and gives
$|\tau|^{-\frac32(1-\frac2p)}$; the more degenerate layers are smaller
because of their shrinking Fourier volume.  For $4<p\leq \infty$, the
same sum is concentrated near the transition scale
$2^k\simeq|\tau|^{-1}$ and gives $|\tau|^{-(1-\frac{1}{p})}$.  
Thus, the sharp $L^p$ decay structure with $2<p<\infty$ includes not only the worst pointwise degeneracy but
also the small size of the frequency region where that degeneracy of the phase
occurs, and the latter effect contributes the faster decay. 
\emph{This type of phenomenon does not appear in the isotropic case (see for instance \cite{Guo-Peng-Wang-08}); rather, it is caused by the anisotropy of the phase function in the oscillatory integral. To the best of our knowledge, the present paper is the first one to identify and capture such a phenomenon.}

The rest of this paper is arranged as follows. 
In Section \ref{sec:pre}, we collect some preliminary material, including some basic facts about Besov spaces and some useful lemmas. In Section \ref{sec:lin}, we proceed beyond the above heuristic argument and develop a unified framework combining an improved analysis of oscillatory integrals with distribution function estimates, which may even provide the optimal logarithmic correction in $L^4$ estimate and also give $L^{4,\infty}$ estimate. In Section~\ref{sec:non}, we apply those sharp linear estimates to the nonlinear problem and prove the main results.

\section{Preliminaries}\label{sec:pre}
In this section, we explain some notations and correct lemmas that will be used throughout this paper.
\subsection{Littlewood--Paley theory and function spaces}
In this subsection, we recall some function spaces which will be used in the sequel. 
Let $\mathscr{S}(\R^3)$ be the space of all Schwartz functions on $\R^3$ and $\mathscr{S}'(\R^3)$ be the set of all tempered distributions on $\R^3$. For any $f\in \mathscr{S}(\R^3)$, we define the Fourier transform and inverse Fourier transform by
\begin{align}
\mathscr{F}[f](\xi)=\widehat{f}(\xi)=
\int_{\R^3} e^{- i x\cdot \xi} f(x)\,dx, \qquad
\mathscr{F}^{-1}[f](x)= (2\pi)^{-3} \int_{\R^3} e^{ i x\cdot \xi} f(\xi)\,d\xi.
\end{align} 
We recall the definition of the Littlewood--Paley decomposition and homogeneous Besov spaces.
Let $\phi \in C_c^{\infty}([0,\infty];[0,1])$  satisfy
$\supp \phi \subset [2^{-1},2]$ and $\sum_{j \in \Z} \phi(2^{-j}r) = 1$ for all $r>0$.
Let us define the dyadic localization operators $\{\Delta_j\}_{j \in \Z}$ by
\begin{align}
    \Delta_j f := \mathscr{F}^{-1}\lp{\widehat\varphi_j(\xi)\widehat{f}(\xi)},
    \qquad
    \widehat {\varphi_j}(\xi):=\phi(2^{-j}|\xi|).
\end{align}
For any $s\in \R,1\leq p,q\leq \infty$, we define the homogeneous Besov space $\dB^{s}_{p,q}(\R^3)$ by 
\begin{align}
 \dB^{s}_{p,q}(\R^3)&
 := \left\{  f \in \mathscr{S}'(\R^3)/\mathscr{P}(\R^3) ;\,
\n{f}_{  \dB^{s}_{p,q} } <\infty 
\right\}, \\
\n{f}_{  \dB^{s}_{p,q} } & :=
\n{
\Mp{ 2^{sj} \n{\Delta_j f}_{L^p} }_{j \in \Z}
}_{\ell^{q}(\Z)}, 
\end{align} 
where $\mathscr{P}(\R^3)$ denotes the set of all polynomials on $\R^3$. 
The following embeddings are well-known:
\begin{align}
    \dB_{p,1}^0(\R^3) \hookrightarrow L^p(\R^3) \hookrightarrow \dB_{p,\infty}^0(\R^3), \qquad 1 \leq p \leq \infty,
\end{align}
and 
\begin{align}
    \dB_{p_1,q}^{\frac{3}{p_1}} (\R^3) \hookrightarrow \dB_{p_2,q}^{\frac{3}{p_2}} (\R^3),
    \qquad 1 \leq p_1 \leq  p_2 \leq \infty,\ 1 \leq q \leq \infty.
\end{align}
See \cite{Bah-Che-Dan-11} for the details.
In the particular case of $p=q=2$, we may identify $\dB^s_{2,2}(\R^3)$ with the homogeneous Sobolev space $\dH^s(\R^3)$ in the sense of norm equivalence.

We summarize some basic results on Besov spaces. 
We begin with an estimate of heat flow on Besov spaces.
\begin{lemm}[\cite{Koz-Oga-Tan-03}*{Lemma 2.2}]\label{lemm:heat}
    Let $1\leq p \leq \infty$ and $s_1,s_2 \in \R$ with $s_1<s_2$.
    Then, there exists a positive constant $C=C(s_1,s_2)$ such that
    \begin{align}
        \n{e^{t\Delta}f}_{\dB_{p,1}^{s_2}} \leq C t^{-\frac{s_2-s_1}{2}}\n{f}_{\dB_{p,\infty}^{s_1}}
    \end{align}
    for all $t>0$ and $f \in \dB_{p,\infty}^{s_1}(\R^3)$.
\end{lemm}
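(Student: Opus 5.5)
The plan is to work dyadic block by dyadic block and then reassemble. By the Littlewood--Paley characterization it suffices to treat each frequency piece separately, so fix $j\in\Z$ and consider $\Delta_j e^{t\Delta}f = e^{t\Delta}\Delta_j f$. I would write this as a convolution of $\Delta_j f$ with the kernel $K_{j,t}:=\mathscr{F}^{-1}[\phi(2^{-j}|\xi|)e^{-t|\xi|^2}]$. To control it, I would introduce a fattened cutoff $\widetilde\phi$ that equals $1$ on $[2^{-1},2]$ and is supported in $[1/4,4]$, and write $\Delta_j e^{t\Delta}f = \widetilde K_{j,t} * \Delta_j f$ with $\widetilde K_{j,t}=\mathscr{F}^{-1}[\widetilde\phi(2^{-j}|\xi|)e^{-t|\xi|^2}]$. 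Young's inequality then gives
\begin{align}
\n{\Delta_j e^{t\Delta}f}_{L^p}\leq \n{\widetilde K_{j,t}}_{L^1}\n{\Delta_j f}_{L^p}.
\end{align}

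The key step is to show that $\n{\widetilde K_{j,t}}_{L^1}\leq C e^{-c t 2^{2j}}$ with constants $C,c$ independent of $j$ and $t$. By scaling, $\widetilde K_{j,t}(x)=2^{3j}\mathscr{F}^{-1}[\widetilde\phi(|\eta|)e^{-t2^{2j}|\eta|^2}](2^jx)$, so the $L^1$ norm equals that of $G_s:=\mathscr{F}^{-1}[\widetilde\phi(|\eta|)e^{-s|\eta|^2}]$ with $s=t2^{2j}$. To estimate $\n{G_s}_{L^1}$ I would use the bound $\n{g}_{L^1}\lesssim \n{\widehat g}_{L^2}+\n{\nabla_\eta^2\widehat g}_{L^2}$ (Cauchy--Schwarz with weight $(1+|x|^2)$; in $3$D this needs two derivatives, since $(1+|x|^2)^{-1}\in L^2$). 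On the annulus $1/4\leq|\eta|\leq 4$, each $\eta$-derivative of $e^{-s|\eta|^2}$ costs at most a factor $(1+s)$, and $e^{-s|\eta|^2}\leq e^{-s/16}$ there. Hence $\n{G_s}_{L^1}\lesssim (1+s)^2e^{-s/16}\lesssim e^{-s/32}$. This step is the only real computation, and it is where the restriction to an annulus is essential: the uniformity in $j$ comes from scale invariance, while the exponential factor comes from $|\eta|$ being bounded away from $0$.

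With the block estimate in hand, multiply by $2^{s_2 j}$ and sum over $j$:
\begin{align}
\n{e^{t\Delta}f}_{\dB^{s_2}_{p,1}}\leq C\sum_{j\in\Z}2^{(s_2-s_1)j}e^{-ct2^{2j}}\,2^{s_1j}\n{\Delta_jf}_{L^p}\leq C\n{f}_{\dB^{s_1}_{p,\infty}}\sum_{j\in\Z}2^{(s_2-s_1)j}e^{-ct2^{2j}}.
\end{align}
It remains to show that the last sum is $\leq C t^{-\frac{s_2-s_1}{2}}$. Set $\sigma=s_2-s_1>0$ and write $2^{\sigma j}=t^{-\sigma/2}(t2^{2j})^{\sigma/2}$. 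The sum then becomes $t^{-\sigma/2}\sum_j \lambda_j^{\sigma/2}e^{-c\lambda_j}$ with $\lambda_j=t2^{2j}$. Because the $\lambda_j$ form a geometric sequence with ratio $4$, this sum is bounded uniformly in $t$. For $\lambda_j\leq1$ the terms form a geometric series, which converges since $\sigma>0$; for $\lambda_j>1$ the exponential decay wins.

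The constant therefore depends only on $\sigma$ (together with the fixed cutoff), which proves the lemma. The strict inequality $s_1<s_2$ is used exactly in the low-frequency geometric sum, and that summation is the point to watch.
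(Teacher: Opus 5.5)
Your proof is correct and follows the standard argument: the paper gives no proof of its own and only cites Kozono--Ogawa--Taniuchi, where the lemma is obtained as you do. There, one proves the block bound $\|\Delta_j e^{t\Delta}f\|_{L^p}\leq Ce^{-ct2^{2j}}\|\Delta_j f\|_{L^p}$ and then sums using $\sum_{j}2^{(s_2-s_1)j}e^{-ct2^{2j}}\leq Ct^{-\frac{s_2-s_1}{2}}$, the same two ingredients the paper uses implicitly in the proof of Proposition \ref{prop:I-decay}.
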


Next, we recall a real interpolation inequality for Besov spaces.
\begin{lemm}[\cite{Bah-Che-Dan-11}*{Proposition 2.22}]\label{lemm:interp}
    Let $1\leq p \leq \infty$ and $s_0,s_1 \in \R$ with $s_0<s_1$. Let $0<\theta<1$. Then there exists a positive constant $C=C(s_1,s_2,\theta)$ such that
    \begin{align}
        \n{f}_{\dB_{p,1}^{\theta s_1 + (1-\theta)s_0}} 
        \leq 
        C
        \n{f}_{\dB_{p,\infty}^{s_1}}^{\theta}
        \n{f}_{\dB_{p,\infty}^{s_0}}^{1-\theta} 
    \end{align}
    for all $f \in \dB_{p,\infty}^{s_1}(\R^3) \cap \dB_{p,\infty}^{s_2}(\R^3)$. 
\end{lemm}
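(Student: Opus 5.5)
The plan is the standard splitting of the dyadic sum at a frequency threshold that is then optimized. Write $s:=\theta s_1+(1-\theta)s_0$. Set
\[
A_0:=\n{f}_{\dB^{s_0}_{p,\infty}}, \qquad A_1:=\n{f}_{\dB^{s_1}_{p,\infty}}.
\]
Both are assumed finite; the hypothesis "$f\in\dB^{s_1}_{p,\infty}\cap\dB^{s_2}_{p,\infty}$" should read $\dB^{s_0}_{p,\infty}$. If $A_0=0$ or $A_1=0$, then every $\Delta_j f$ vanishes, so $f=0$ in $\mathscr{S}'/\mathscr{P}$ and there is nothing to prove. Hence we may assume $0<A_0,A_1<\infty$.

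For any integer $N$, split
\[
\n{f}_{\dB^{s}_{p,1}}=\sum_{j\leq N}2^{sj}\n{\Delta_j f}_{L^p}+\sum_{j>N}2^{sj}\n{\Delta_j f}_{L^p}.
\]

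For the low frequencies, use $\n{\Delta_j f}_{L^p}\leq 2^{-s_0 j}A_0$. Since $s-s_0=\theta(s_1-s_0)>0$, the resulting series is geometric and dominated by its top term:
\[
\sum_{j\leq N}2^{(s-s_0)j}A_0\leq \frac{2^{\theta(s_1-s_0)N}}{1-2^{-\theta(s_1-s_0)}}A_0 .
\]

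For the high frequencies, use $\n{\Delta_j f}_{L^p}\leq 2^{-s_1 j}A_1$. Since $s-s_1=-(1-\theta)(s_1-s_0)<0$, this series is geometric and dominated by its first term:
\[
\sum_{j>N}2^{(s-s_1)j}A_1\leq \frac{2^{-(1-\theta)(s_1-s_0)N}}{1-2^{-(1-\theta)(s_1-s_0)}}A_1 .
\]

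Next, choose $N$ to balance the two bounds. The ideal choice makes $2^{(s_1-s_0)N}$ equal to $A_1/A_0$. Since $N$ must be an integer, take
\[
N:=\left\lfloor \frac{\log_2(A_1/A_0)}{s_1-s_0}\right\rfloor,
\qquad\text{so that}\qquad
\tfrac12\,(A_1/A_0)^{\frac{1}{s_1-s_0}}< 2^{N}\leq (A_1/A_0)^{\frac{1}{s_1-s_0}} .
\]
Substituting, the low-frequency bound becomes at most $C(A_1/A_0)^{\theta}A_0=CA_0^{1-\theta}A_1^{\theta}$. The high-frequency bound becomes at most $C\,2^{(1-\theta)(s_1-s_0)}(A_0/A_1)^{1-\theta}A_1=CA_0^{1-\theta}A_1^{\theta}$.

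Adding the two pieces gives the claim. The constant depends only on $\theta$ and $s_1-s_0$, and not on $p$.

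There is no real obstacle here. The only points needing care are the integer rounding of $N$, which costs a harmless factor $2^{(1-\theta)(s_1-s_0)}$, and the degenerate cases $A_0A_1=0$, both handled above.
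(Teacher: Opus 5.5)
Your proof is correct and is the standard argument behind Proposition 2.22 of Bahouri--Chemin--Danchin, which the paper cites without reproducing: split the dyadic sum at an integer threshold $N$, bound $j\leq N$ by $\|f\|_{\dot{B}^{s_0}_{p,\infty}}$ and $j>N$ by $\|f\|_{\dot{B}^{s_1}_{p,\infty}}$, and choose $2^{(s_1-s_0)N}\approx A_1/A_0$. You were right to read $s_2$ as $s_0$ in the hypothesis, and the rounding loss can even be avoided: the tail sum is bounded by $2^{-(1-\theta)(s_1-s_0)(N+1)}A_1/\bigl(1-2^{-(1-\theta)(s_1-s_0)}\bigr)$, and $2^{N+1}>(A_1/A_0)^{1/(s_1-s_0)}$.
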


Let us review the following basic para-product estimate in Besov spaces. 
\begin{lemm}[\cite{Cha-04}*{Lemma 2.2}]\label{lemm:prod-est}
    For $s>0$, $1\leq p,p_1,p_2,q\leq \infty$ with $1/p  = 1/p_1 + 1/p_2$, there exists a positive constant $C=C(s,p,p_1,p_2,q)$ such that 
    \begin{align}
        \n{fg}_{ \dB^{s}_{p,q}}
        \leq 
        C \sp{
        \n{f}_{L^{p_1}} 
        \n{g}_{\dB^{s}_{p_2,q}} 
        + 
        \n{f}_{\dB_{p_2,q}^s}\n{g}_{L^{p_1}} 
        }
    \end{align} 
    for all  $f,g\in L^{p_1}(\R^3) \cap \dB^{s}_{p_2,q}(\R^3)$.
\end{lemm}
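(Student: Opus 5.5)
The statement is the para-product estimate of Lemma \ref{lemm:prod-est}. I would prove it by Bony's decomposition,
\begin{align}
    fg = T_f g + T_g f + R(f,g),
\end{align}
where
\begin{align}
    T_f g := \sum_{j} S_{j-1} f\, \Delta_j g,
    \qquad
    S_{j-1}:=\sum_{k \leq j-2}\Delta_k,
    \qquad
    R(f,g):=\sum_{|j-j'|\leq 1}\Delta_j f\,\Delta_{j'} g.
\end{align}
Each of the three pieces is estimated separately in $\dB^s_{p,q}$ using only H\"older's inequality, the uniform bound $\n{S_{j-1}f}_{L^{p_1}}\leq C\n{f}_{L^{p_1}}$ (Young's inequality, since the kernels of $S_j$ are $L^1$-normalized), and $\n{\Delta_k f}_{L^{p_1}}\leq C\n{f}_{L^{p_1}}$.

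\textbf{Paraproduct terms.} The Fourier support of $S_{j-1}f\,\Delta_j g$ lies in an annulus $\{|\xi|\simeq 2^j\}$. Hence $\Delta_k(T_f g)$ only involves indices with $|j-k|\leq N_0$ for a fixed $N_0$. H\"older's inequality then gives
\begin{align}
    2^{sk}\n{\Delta_k T_f g}_{L^p}
    \leq C\sum_{|j-k|\leq N_0} 2^{sk}\n{f}_{L^{p_1}}\n{\Delta_j g}_{L^{p_2}} .
\end{align}
Taking the $\ell^q$ norm yields $C\n{f}_{L^{p_1}}\n{g}_{\dB^s_{p_2,q}}$. The term $T_g f$ is handled symmetrically and gives $C\n{g}_{L^{p_1}}\n{f}_{\dB^s_{p_2,q}}$. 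This symmetric step is where both terms on the right-hand side of the lemma arise.

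\textbf{Remainder.} This is the step needing the hypothesis $s>0$. The product $\Delta_j f\,\Delta_{j'} g$ with $|j-j'|\leq1$ has Fourier support in a ball $\{|\xi|\leq C2^j\}$, not an annulus. Thus $\Delta_k R(f,g)$ receives contributions from all $j\geq k-N_0$, and
\begin{align}
    2^{sk}\n{\Delta_k R(f,g)}_{L^p}
    \leq C\sum_{j\geq k-N_0} 2^{-s(j-k)}\,\n{f}_{L^{p_1}}\,2^{sj}\n{\widetilde\Delta_j g}_{L^{p_2}},
\end{align}
where $\widetilde\Delta_j=\Delta_{j-1}+\Delta_j+\Delta_{j+1}$. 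Because $s>0$, the sequence $2^{-s m}\mathbf 1_{m\geq -N_0}$ is summable. Young's inequality for convolutions on $\Z$ ($\ell^1*\ell^q\subset\ell^q$) then bounds the $\ell^q$ norm by $C\n{f}_{L^{p_1}}\n{g}_{\dB^s_{p_2,q}}$.

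Summing the three bounds proves the lemma. The only point requiring care is the remainder, where positivity of $s$ is essential for convergence of the high-to-low frequency sum. The rest is standard bookkeeping of Fourier supports, as in \cite{Bah-Che-Dan-11}.
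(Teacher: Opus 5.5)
Your Bony-decomposition proof is correct and follows essentially the route the paper indicates. The paper omits the proof and cites Chae's lemma together with the paraproduct and remainder estimates of Bahouri--Chemin--Danchin (Theorems 2.47 and 2.52), which are exactly your three bounds.

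One bookkeeping point: with the paper's normalization $\supp\phi\subset[2^{-1},2]$ and your choice $S_{j-1}=\sum_{k\leq j-2}\Delta_k$, the product $S_{j-1}f\,\Delta_j g$ is only supported in a ball $\{|\xi|\leq C2^j\}$, not an annulus. To fix this, either use $S_{j-N}$ with $N\geq 3$ in the paraproduct (moving the extra near-diagonal terms into the remainder), or estimate $T_fg$ exactly as you estimate $R(f,g)$, which the hypothesis $s>0$ permits.
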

We remark that the proof of Lemma \ref{lemm:prod-est} can also be obtained from \cite{Bah-Che-Dan-11}*{Theorem 2.47, 2.52}. The details are omitted.

Finally, we recall the weak Lebesgue spaces. For $1\leq p< \infty$, the weak Lebesgue space $L^{p,\infty}(\R^3)$ consists of all measurable functions $f$ on $\R^3$ such that
\begin{align}
    \n{f}_{L^{p,\infty}}
    :=
    \sup_{\lambda>0} 
    \lambda
    \abso{\{x \in \R^3;\, |f(x)|>\lambda  \}}^\frac{1}{p}
    <\infty, 
\end{align} 
It is well-known that $L^p(\R^3)\subsetneq L^{p,\infty}(\R^3)$ when $1\leq p<\infty$. For more details on the function spaces mentioned above, we refer to \cites{Bah-Che-Dan-11,Gra-14}.

\subsection{Stationary phase lemmas for oscillatory integrals}
Let us prepare the estimates for oscillatory integrals.
\begin{lemm}\label{lemm:sta}
    Let $K \subset \R^3$ be a non-empty compact set.
    Let $\{\psi_\alpha\}_{\alpha \in A} \subset C^6(K;\R)$ and $\{b_\alpha\}_{\alpha \in A} \subset C_c^4(\R^3;\R)$ be families of phase and amplitude functions satisfying
    \begin{align}
        \inf_{\alpha \in A}
        \inf_{\xi \in K}
        \abso{\det \nabla^2 \psi_\alpha(\xi)}>0,
        \qquad
        \bigcup_{\alpha \in A}
        \supp b_\alpha
        \subset K,
    \end{align}
    and
    \begin{align}
        \sum_{m=0}^6
        \sup_{\alpha \in A}
        \n{\nabla^m\psi_\alpha}_{L^\infty(K)}<\infty,
        \qquad
        \sum_{m=0}^4
        \sup_{\alpha \in A}
        \n{\nabla^m b_\alpha}_{L^\infty(K)}<\infty.
    \end{align}
    Then, there exists a positive constant $C$ such that 
    \begin{align}
        \sup_{\alpha \in A}
        \sup_{x \in \R^3}
        \abso{\int_{\R^3}e^{ix\cdot \xi}e^{i\tau \psi_{\alpha}(\xi)}b_{\alpha}(\xi)\, d\xi}
        \leq C|\tau|^{-\frac{3}{2}}
    \end{align}
    for all $\tau \in \R$ with $|\tau| \geq 1$.
\end{lemm}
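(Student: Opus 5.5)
The plan is to reduce to a uniform version of the classical stationary phase bound, treating $x$ as an extra parameter. For $|\tau|\geq 1$ set $y:=x/\tau$ and write the integral as $\int e^{i\tau\Phi_{\alpha,y}(\xi)}b_\alpha(\xi)\,d\xi$ with $\Phi_{\alpha,y}(\xi):=\psi_\alpha(\xi)+y\cdot\xi$. The Hessian $\nabla^2\Phi_{\alpha,y}=\nabla^2\psi_\alpha$ does not depend on $y$. Moreover, all derivatives of order $\geq 2$ of $\Phi_{\alpha,y}$ are bounded uniformly in $(\alpha,y)$. Put $M:=2\sup_\alpha\|\nabla\psi_\alpha\|_{L^\infty(K)}+1$.

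\emph{Large $|y|$.} If $|y|\geq M$, then $|\nabla\Phi_{\alpha,y}|\geq |y|/2\geq 1/2$ on $K$. I would integrate by parts twice with the operator $L=(i\tau|\nabla\Phi|^2)^{-1}\nabla\Phi\cdot\nabla$. This needs only $b_\alpha\in C^2$ and $\psi_\alpha\in C^3$. Every derivative of $\nabla\Phi/|\nabla\Phi|^2$ is bounded uniformly, because $\nabla^k\Phi=\nabla^k\psi_\alpha$ for $k\geq2$ and $|\nabla\Phi|\gtrsim 1$. This gives the bound $C|\tau|^{-2}\leq C|\tau|^{-3/2}$ uniformly.

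\emph{Bounded $|y|$: uniform local injectivity.} For $|y|\leq M$ the family $\{\Phi_{\alpha,y}\}$ has uniform $C^6$ bounds and a uniformly nondegenerate Hessian. Writing the inverse via the adjugate, $\sup_{\alpha,\xi}|(\nabla^2\psi_\alpha(\xi))^{-1}|\leq C_0$. Taylor's formula gives
\begin{align}
\nabla\Phi(\xi)-\nabla\Phi(\eta)=\nabla^2\psi_\alpha(\eta)(\xi-\eta)+O(|\xi-\eta|^2).
\end{align}
Hence there is $r>0$, independent of $(\alpha,y)$, with $|\nabla\Phi(\xi)-\nabla\Phi(\eta)|\geq c|\xi-\eta|$ whenever $|\xi-\eta|\leq 2r$. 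Here I use that $\psi_\alpha$ is $C^6$ on a neighborhood of $K$; otherwise I would shrink $K$ to a compact neighborhood of $\bigcup\supp b_\alpha$ inside it. Cover $K$ by finitely many balls $B_j$ of radius $r$ and take a fixed partition of unity $\{\chi_j\}$ subordinate to it, independent of $\alpha$ and $y$. On each $2B_j$, $\nabla\Phi$ has at most one zero. I then split into two cases.

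\emph{Case 1: $|\nabla\Phi|\geq cr/2$ on $\supp\chi_j$.} The same two-fold integration by parts as above gives $C|\tau|^{-2}$.

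\emph{Case 2: otherwise.} Let $\xi^*\in\overline{2B_j}$ minimize $|\nabla\Phi|$; it is either the critical point or a point where $|\nabla\Phi|$ is small. Injectivity gives $|\nabla\Phi(\xi)|\gtrsim|\xi-\xi^*|$ on $B_j$, possibly up to the small value $|\nabla\Phi(\xi^*)|$. The cleanest route is to use the critical point of the extended $\nabla\Phi$ near $\xi^*$, which the quantitative inverse function theorem provides in $2B_j$ when $\Phi$ is defined there. I would then decompose dyadically around $\xi^*$ with radii $\rho_l:=2^l|\tau|^{-1/2}$, using cutoffs $\theta_l$ with $|\nabla^k\theta_l|\lesssim\rho_l^{-k}$.
\begin{itemize}
\item The innermost piece $|\xi-\xi^*|\lesssim|\tau|^{-1/2}$ is bounded trivially by its volume, $C|\tau|^{-3/2}$.
\item On the shell of radius $\rho_l$, I integrate by parts twice with $L$ above. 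Each application gains a factor $(|\tau|\rho_l\cdot\rho_l)^{-1}$, since $|\nabla\Phi|\gtrsim\rho_l$ and each derivative falling on the cutoff or on $\nabla\Phi/|\nabla\Phi|^2$ costs at most $\rho_l^{-1}$. The shell contribution is therefore $\lesssim\rho_l^3(|\tau|\rho_l^2)^{-2}=|\tau|^{-3/2}2^{-l}$, which is summable in $l$.
\end{itemize}
Summing over the finitely many $j$ gives the claim, with constants depending only on the uniform bounds in the hypotheses.

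The main obstacle is making the lower bound $|\nabla\Phi_{\alpha,y}(\xi)|\gtrsim|\xi-\xi^*|$ uniform in $(\alpha,y)$, including near $\partial K$ where the critical point may fall outside the support. I would handle this by the quantitative inverse function argument on balls of fixed radius $r$. The derivative bookkeeping in the shell integrations by parts, which needs only $C^3$ phases and $C^2$ amplitudes, is routine by comparison, and is well within the $C^6$ and $C^4$ assumptions of the lemma.
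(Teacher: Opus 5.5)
Your proposal is correct and follows essentially the route the paper intends. The paper gives no proof of its own and defers to the standard stationary phase argument (H\"ormander, Stein--Shakarchi): integration by parts where $|\nabla\Phi|$ is bounded below, and near the nondegenerate critical point a cutoff at scale $|\tau|^{-1/2}$ plus $2>3/2$ integrations by parts using $|\nabla\Phi(\xi)|\gtrsim|\xi-\xi^*|$, which is exactly what you do while correctly making the constants uniform in $\alpha$ and in $y=x/\tau$. The one caveat is the one you flag yourself: the Taylor/injectivity step and the critical point $\xi^*$ of the extended gradient need $\psi_\alpha$ to be $C^3$ with uniform bounds on a fixed neighborhood of $K$, which holds for every phase the paper feeds into the lemma, whereas shrinking $K$ is not a valid alternative because $\bigcup_\alpha\supp b_\alpha$ may touch $\partial K$.
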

One may prove Lemma \ref{lemm:sta} along the same argument as the standard stationary phase estimate; see \cite{Hor-90}*{Theorem 7.7.1} or \cite{Ste-Sha-11}*{Proposition 2.5, page 329} 
for instance.
Next, we review the asymptotic expansion of oscillatory integrals.
\begin{lemm}[\cite{Hor-90}*{Theorem 7.7.6}]\label{lemm:asy}
    Let $K \subset \R^3$ be a non-empty compact set. 
    Let $\psi=\psi(y,\xi) \in C^{\infty}(K \times \R^3;\R)$ and $a=a(y,\xi) \in C_c^{\infty}(K \times \R^3)$.
    Assume that there exists a $\Xi \in C^{\infty}(K;\R^3)$ such that 
    \begin{itemize}
        \item 
        For $y \in K$, $\xi=\Xi(y)$ is a unique stationary point for $\R^3 \ni \xi \mapsto \psi(y,\xi)\in \R$ on $\supp a(y,\cdot)$.
        \item 
        It holds $\det \nabla_\xi^2 \psi(y,\Xi(y))\neq 0$ for all $y \in K$.
    \end{itemize}
    Then, there exists a positive constant $C=C(K)>0$ such that 
    \begin{align}
        \abso{
            \frac{1}{(2\pi)^3}
            \int_{\R^3}
            e^{i\tau \psi(y,\xi)}
            a(y,\xi)\, d\xi
            -
            \frac{e^{i\tau \psi (y,\Xi(y))}e^{\frac{\pi}{4}i \sgn \nabla^2_\xi \psi(y,\Xi(y))}}{(2\pi \tau)^{\frac{3}{2}}|\det \nabla_\xi^2 \psi (y,\Xi(y))|^{\frac{1}{2}}}
            a(y,\Xi(y))
        }
        \leq C\tau^{-\frac{5}{2}}
    \end{align}
    for all $y \in K$ and $\tau \geq 1$.
    Here, the signature of a real symmetric matrix is defined as the number of its positive eigenvalues minus the number of its negative eigenvalues.
\end{lemm}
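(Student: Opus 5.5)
Since this is the parametrized stationary phase lemma of \cite{Hor-90}*{Theorem 7.7.6}, I would prove it by the classical argument, checking at each step that the constants depend only on finitely many derivatives of $\psi$ and $a$ on the compact set $K\times \supp a$. This makes the bound uniform in $y\in K$.

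\emph{Step 1: localization.} Since $\Xi$ is continuous and $\det\nabla_\xi^2\psi(y,\Xi(y))\neq0$ on the compact set $K$, there is $\delta>0$ with $|\det\nabla_\xi^2\psi(y,\xi)|\geq c_0>0$ for all $|\xi-\Xi(y)|\leq 2\delta$ and $y\in K$. Shrinking $\delta$, Taylor's formula gives $|\nabla_\xi\psi(y,\xi)|\geq c|\xi-\Xi(y)|$ there. Take a cutoff $\chi(y,\xi)=\chi_0((\xi-\Xi(y))/\delta)$ and split $a=\chi a+(1-\chi)a$. On $\supp(1-\chi)a$, the point $\Xi(y)$ is the unique stationary point, so by continuity and compactness $|\nabla_\xi\psi|\geq c_1>0$ there, uniformly in $y$. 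Integrating by parts $N$ times with the operator $L=\frac{\nabla_\xi\psi\cdot\nabla_\xi}{i\tau|\nabla_\xi\psi|^2}$ then bounds this piece by $C_N\tau^{-N}$ uniformly in $y$.

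\emph{Step 2: reduction to a quadratic phase.} Near $\Xi(y)$, apply the Morse lemma with parameters. There is a smooth family of diffeomorphisms $\xi=\Xi(y)+\kappa(y,z)$ with $\kappa(y,0)=0$ and $\nabla_z\kappa(y,0)=I$, such that
\[
\psi(y,\Xi(y)+\kappa(y,z))=\psi(y,\Xi(y))+\tfrac12\langle H(y)z,z\rangle,\qquad H(y)=\nabla_\xi^2\psi(y,\Xi(y)).
\]
Its derivatives are bounded uniformly in $y$; this follows from the usual construction via the integral form of Taylor's remainder and the inverse function theorem with parameters. After this change of variables the localized integral becomes
\[
e^{i\tau\psi(y,\Xi(y))}\int e^{\frac{i\tau}{2}\langle H(y)z,z\rangle}u(y,z)\,dz,
\]
where $u=(\chi a)(y,\Xi+\kappa)\,|\det\nabla_z\kappa|$ is smooth and compactly supported in $z$, with $u(y,0)=a(y,\Xi(y))$.

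\emph{Step 3: the quadratic integral.} By Plancherel and the explicit Fourier transform of the Gaussian $e^{\frac{i\tau}{2}\langle Hz,z\rangle}$,
\[
\int e^{\frac{i\tau}{2}\langle Hz,z\rangle}u\,dz=\frac{(2\pi)^{3/2}e^{\frac{\pi i}{4}\sgn H}}{\tau^{3/2}|\det H|^{1/2}}\,\frac{1}{(2\pi)^3}\int e^{-\frac{i}{2\tau}\langle H^{-1}\eta,\eta\rangle}\widehat u(y,\eta)\,d\eta .
\]
Next use $|e^{-i s}-1|\leq|s|$ with $s=\langle H^{-1}\eta,\eta\rangle/(2\tau)$. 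The remainder is then bounded by $C\tau^{-1}\int|\eta|^2|\widehat u(y,\eta)|\,d\eta$, which is controlled uniformly in $y$ by finitely many $z$-derivatives of $u$ (say up to order $6$), while $\frac{1}{(2\pi)^3}\int\widehat u\,d\eta=u(y,0)$. Combined with the prefactor this gives the leading term of the lemma (after the $(2\pi)^{-3}$ normalization) plus an error $O(\tau^{-5/2})$, and Step 1 contributes $O(\tau^{-N})$.

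The main obstacle is the uniformity in $y$, namely the Morse lemma with parameters and the uniform lower bound on $|\nabla_\xi\psi|$ away from $\Xi(y)$. Both rely only on smooth dependence on $y$ and compactness of $K$. If the parametrized Morse lemma is inconvenient, I would fall back on H\"ormander's alternative: compare $\psi$ with its quadratic part via the homotopy $\psi_s=\psi(y,\Xi)+\frac12\langle Hz,z\rangle+s\,r(y,z)$ with cubic remainder $r$. One then estimates the derivative in $s$ using $|\nabla_z\psi_s|\gtrsim|z|$, which again yields an $O(\tau^{-5/2})$ error.
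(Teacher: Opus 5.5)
The paper gives no proof of this lemma; it defers entirely to H\"ormander. Your route (a parametrized Morse lemma, the exact Fourier transform of the Gaussian $e^{\frac{i\tau}{2}\langle Hz,z\rangle}$, and non-stationary phase for the rest) is the standard textbook argument. Its core is sound:
\begin{itemize}
\item The prefactor in Step 3 is correct.
\item The $(2\pi)^{-3}$ normalization reproduces $(2\pi\tau)^{-3/2}$.
\item Six $z$-derivatives of $u$ do suffice to bound $\int|\eta|^2|\widehat u(y,\eta)|\,d\eta$ in three dimensions.
\item $u(y,0)=a(y,\Xi(y))$ because $\chi_0(0)=1$ and $\nabla_z\kappa(y,0)=I$.
\end{itemize}
H\"ormander's own proof is essentially your fallback. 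He Taylor-expands $s\mapsto\int u\,e^{i\tau\psi_s}$ along the homotopy from the quadratic part, and bounds the terms $\int u\,(i\tau r)^j e^{i\tau\psi_s}$ using $|\nabla\psi_s|\gtrsim|z|$ and the vanishing of $r^j$ to order $3j$. That route needs no Morse chart of uniform size and works for complex phases with nonnegative imaginary part. Your route is more transparent for the leading term. In it, $\delta$ must also be taken below the uniform radius of the parametrized Morse lemma, which is harmless.

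There is one step that does not hold as you state it. You claim $|\nabla_\xi\psi|\ge c_1>0$ on $\supp(1-\chi)a$ uniformly in $y$ ``by continuity and compactness,'' and you say this relies only on smooth dependence on $y$ and compactness of $K$. With the hypothesis read literally, fiber by fiber on $\supp a(y,\cdot)$, this is false. The set $\{(y,\xi): \xi\in\supp a(y,\cdot),\ |\xi-\Xi(y)|\ge\delta\}$ need not be closed, because $\supp a(y,\cdot)$ can drop at isolated $y$. Here is a counterexample:
\begin{itemize}
\item Take $K=[-1,1]^3$ and $a(y,\xi)=b_0(\xi)+y_1b(\xi)$, with $b_0$ supported near $0$ and $b=\beta(\xi_1-2)\gamma(\xi_2,\xi_3)$ supported near $\xi^*=(2,0,0)$.
\item Take $\psi=\frac12|\xi|^2$ near $0$ and $\psi=\frac13(\xi_1-2)^3+y_1^2(\xi_1-2)+\frac12(\xi_2^2+\xi_3^2)$ near $\xi^*$, glued smoothly elsewhere.
\item For $y_1\neq0$ there is no stationary point on $\supp b$. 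For $y_1=0$ the set $\supp b$ is not in $\supp a(y,\cdot)$. So every fiberwise hypothesis holds with $\Xi\equiv0$ and Hessian $I$.
\item Yet $\min_{\supp b}|\nabla_\xi\psi(y,\cdot)|=y_1^2$. Taking $y_1=\tau^{-1/3}$, the $b$-piece equals $y_1\cdot\tau^{-1/3}\bigl(2\pi\,\mathrm{Ai}(1)\beta(0)+o(1)\bigr)\cdot O(\tau^{-1})$, which is of size $\tau^{-5/3}\gg\tau^{-5/2}$.
\end{itemize}
So the lemma itself needs the hypothesis on the closed support of $a$ in $K\times\R^3$. Equivalently, it needs a uniform bound on $|\xi-\Xi(y)|/|\nabla_\xi\psi(y,\xi)|$ over a fixed compact set, which is exactly the uniformity condition in H\"ormander's theorem. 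Under that reading your compactness argument is correct, and you should state the strengthened hypothesis explicitly. In the paper's application this is harmless: $\chi^\pm(\cdot,y)$ confines the amplitude to a fixed small ball around $\Xi^\pm(y)$, and $\Xi^+(y)$ and $\Xi^-(y)=-\Xi^+(y)$ stay a fixed distance apart for $y\in K$.
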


\section{Linear analysis}\label{sec:lin}
The aim of this section is to derive the sharp decay estimates for the Stokes--Coriolis semigroup:
\begin{align}
    \mathcal{T}_\Omega(t)u_0
    ={}
    e^{t(\Delta - \Om \mathbb Pe_3 \times \mathbb{P})}u_0
    =
    e^{t\Delta}e^{it\Omega \frac{D_3}{|D|}}P_+(D) u_0
    +
    e^{t\Delta}e^{-it\Omega \frac{D_3}{|D|}}P_-(D) u_0.
\end{align}
where the Fourier multiplier $P_\pm(D)=\mathscr{F}^{-1} P_\pm(\xi)\mathscr{F}$ is defined in \eqref{def-of-PR}.
\subsection{Refined decay estimates}
To simplify the presentation, we introduce the following notation.
\begin{df}
    For $2 \leq p \leq \infty$ and $\tau \in \R$, we define
    \begin{align}
    \mathcal{D}_p(\tau)
    :=
    \begin{cases}
        {(1+|\tau|)^{-\frac{3}{2}(1-\frac{2}{p})}}  & {\rm for}\quad 2 \leq p < 4,
        \\
        {(1+|\tau|)^{-\frac{3}{4}}(\log(e+|\tau|))^{\frac{1}{4}}}  & {\rm for}\quad p=4, 
        \\
        {(1+|\tau|)^{-(1-\frac{1}{p})}}  & {\rm for}\quad 4<p \leq \infty.
    \end{cases}
   \end{align}
\end{df}
The goal of this subsection is to establish the following theorem.
\begin{thm}\label{thm:lin-decay}
    For $2 \leq p \leq \infty$, $m \in \N \cup \{0\}$,
    the following statements hold true.
    \begin{itemize}
    \item [(1)]
    For any $u_0 \in L^1(\R^3)$ with $\div u_0=0$,
    it holds
    \begin{align}
        \n{\nabla^m\mathcal{T}_\Omega(t)u_0}_{L^p}
        =
        o\sp{t^{-\frac{3}{2}(1-\frac{1}{p})-\frac{m}{2}}
            \mathcal{D}_p(|\Omega| t)}
    \end{align}
    as $t \to \infty$.
    \item [(2)]
    If $|x|u_0(x) \in L^1(\R^3)$ with $\div u_0=0$, then there exists a positive constant $C=C(p,m)$ such that
    \begin{align}
        \n{\nabla^m\mathcal{T}_\Omega(t)u_0}_{L^p}
        \leq 
        Ct^{-\frac{3}{2}(1-\frac{1}{p})-\frac{m+1}{2}}
        \mathcal{D}_p(|\Omega| t)\n{|x|u_0(x)}_{L^1(\R^3_x)}
    \end{align}
    for all $t >0$.
    \end{itemize}
\end{thm}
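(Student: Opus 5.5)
The proof reduces both statements to one frequency-localized kernel estimate, obtained from the degeneracy-layer heuristic of the introduction. By scaling, $\nabla^m e^{t\Delta}e^{i\tau D_3/|D|}P_\pm(D)$ restricted to the dyadic block $\Delta_j$ has kernel $2^{3j}K_j(2^j x)$, where
\[
K_j(x)=\int e^{ix\cdot\xi}e^{i\tau\xi_3/|\xi|}(2^{j}i\xi)^{m}e^{-t2^{2j}|\xi|^2}P_\pm(\xi)\phi(|\xi|)\,d\xi .
\]
The phase is homogeneous of degree $0$, so $\tau$ is unchanged by the scaling. The key claim is: for a fixed annular amplitude $a$ with $C^4$-bounds $M$,
\begin{align}
\n{\mathscr{F}^{-1}[e^{i\tau\xi_3/|\xi|}a]}_{L^p}\le C M\,\mathcal{D}_p(\tau),\qquad
\n{\mathscr{F}^{-1}[e^{i\tau\xi_3/|\xi|}a]}_{L^{4,\infty}}\le CM(1+|\tau|)^{-3/4}.\label{key-plan}
\end{align}
Granting this, Young's inequality is not needed, since the kernel is itself in $L^p$. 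We get $\n{\nabla^m\Delta_j\mathcal{T}_\Om(t)u_0}_{L^p}\le C2^{j(3-3/p+m)}e^{-ct2^{2j}}\mathcal{D}_p(\Om t)\n{u_0}_{L^1}$. Summing over $j$ gives $t^{-\frac32(1-\frac1p)-\frac m2}\mathcal{D}_p(|\Om|t)\n{u_0}_{L^1}$.

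To prove \eqref{key-plan}, split the amplitude with smooth cutoffs $\chi_k(|\xih|^2\xi_3)$, $2^k\lesssim1$, into the layers $I_k$ of the introduction. The zero sets $\xih=0$ and $\xi_3=0$ are handled symmetrically, with a further split into the two degeneracies if needed. On each layer we need $\n{I_k}_{L^\infty}\lesssim\min\{2^k,|\tau|^{-3/2}2^{-k/2}\}$. The bound $2^k$ is the volume of the layer. For the other bound, rescale the layer to unit size in the degenerate directions (e.g.\ $\xi_3=2^{k}\eta_3$ near the equator, or $\xih=2^{k/2}\eta_h$ near the poles). The phase then becomes $\tau 2^{k}\cdot(\text{nondegenerate phase})$ plus linear terms that are absorbed into $x$. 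Lemma \ref{lemm:sta}, applied uniformly in the family indexed by $k$, then gives the $|\tau 2^{\alpha k}|^{-3/2}$-type decay times the Jacobian. The $L^2$ bound $2^{k/2}$ is Plancherel.

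Interpolation gives \eqref{I_k-L^p}. Summing in $k$ gives $|\tau|^{-\frac32(1-\frac2p)}$ when $p<4$, where the top layer dominates. It gives $|\tau|^{-(1-1/p)}$ when $p>4$, where the layer $2^k\simeq|\tau|^{-1}$ dominates. At $p=4$ all layers contribute equally, which produces $\log|\tau|$; the triangle inequality would give the power $1$, not the claimed $\frac14$. The step I expect to be the main obstacle is exactly this $p=4$ endpoint and the $L^{4,\infty}$ bound. Here I would instead estimate the distribution function directly. Fix $\lambda$, let $k_\lambda$ be defined by $2^{k_\lambda}\simeq\lambda$, and bound
\[
|\{|\sum I_k|>\lambda\}|\le|\{|\sum_{k<k_\lambda}I_k|>\lambda/2\}|+|\{|\sum_{k\ge k_\lambda}I_k|>\lambda/2\}| .
\]
The first set is empty by the $L^\infty$ sum. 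For the second, use Chebyshev with $L^2$ and $L^\infty$ of the tail. Orthogonality of the layers in $L^2$ (their Fourier supports are almost disjoint) gives $\n{\sum_{k\ge k_\lambda}I_k}_{L^2}^2\lesssim\sum 2^k$. This yields $\lambda|\cdot|^{1/4}\lesssim|\tau|^{-3/4}$ uniformly, i.e.\ the $L^{4,\infty}$ bound. Feeding the same orthogonality into $\n{f}_4^4=\int 4\lambda^3|\{|f|>\lambda\}|\,d\lambda$ over the range $\lambda\in[|\tau|^{-3/2},|\tau|^{-1}]$ gives exactly $\log^{1/4}$. For the $L^{4,\infty}$ version of Theorem \ref{thm:lin-decay}, the dyadic sum in $j$ must be done in the weak space. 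Since $L^{4,\infty}$ is normable, the geometric sum still converges.

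For (2), use $\div u_0=0$ componentwise only through the moment structure: write $e^{t\Delta}$-part kernel $G$ and
\[
\mathcal{T}u_0(x)=\int[G(x-y)-G(x)]u_0(y)\,dy+G(x)\int u_0 .
\]
We have $\int u_0=0$, since $u_0=\div(x\otimes u_0)$ is a divergence of an $L^1$ field. The difference is bounded by $|y|\sup|\nabla G|$ in $L^p$, and the estimate above with $m+1$ gives the extra $t^{-1/2}$. For (1), approximate $u_0$ in $L^1$ by $C_c^\infty$ divergence-free $v_0$; these have $|x|v_0\in L^1$. Then (2) gives $o(\cdot)$ for $v_0$, and the uniform $O$-bound with small $\n{u_0-v_0}_{L^1}$ handles the remainder. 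The density step requires divergence-free approximation in $L^1$, which I would do by mollification plus truncation of $\mathbb{P}$ at low frequency applied only through the semigroup; alternatively, apply the approximation directly to $\mathcal{T}$, which already contains $P_\pm$.
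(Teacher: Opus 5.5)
Your architecture matches the paper's: pole/equator degeneracy layers, anisotropic rescaling so that Lemma~\ref{lemm:sta} applies uniformly in $k$, Plancherel plus interpolation and summation in $k$, a distribution-function argument at $p=4$, and the zero-mean/mean-value argument for the extra $t^{-1/2}$. However, the step you flag as the main obstacle does not work as written.

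\textbf{The gap at $p=4$.} You cut the layers only from below, at $2^{k_\lambda}\simeq\lambda$, and then apply Chebyshev to the whole tail $\sum_{k\geq k_\lambda}I_k$. That tail contains the least degenerate layers $2^k\simeq 1$. So orthogonality gives $\n{\sum_{k\geq k_\lambda}I_k}_{L^2}^2\simeq\sum_{k_\lambda\leq k\leq 1}2^k\simeq 1$, and Chebyshev yields only $|\{|I|>\lambda\}|\lesssim\lambda^{-2}$, i.e.\ $\lambda|\{|I|>\lambda\}|^{1/4}\lesssim\lambda^{1/2}$. This is $\lesssim|\tau|^{-3/4}$ only for $\lambda\lesssim|\tau|^{-3/2}$. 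It fails on the whole range $|\tau|^{-3/2}\ll\lambda\lesssim|\tau|^{-1}$, which is exactly the range that produces the logarithm in the layer-cake integral. Hence neither the $L^{4,\infty}$ bound nor the factor $(\log|\tau|)^{1/4}$ in $\mathcal{D}_4$ follows from what you wrote, and the latter is needed for the theorem at $p=4$. The triangle-inequality fallback gives only the first power of the logarithm, as you note yourself.

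\textbf{The missing idea.} You need a second, upper cutoff supplied by the dispersive bound. Consider the layers with $2^{-k/2}|\tau|^{-3/2}\leq\delta\lambda$, i.e.\ $2^k\geq\delta^{-2}|\tau|^{-3}\lambda^{-2}$. Their sup norms form a geometric series with sum $\leq C\delta\lambda$, so for small $\delta$ they can be discarded together with the low layers. Only the layers with $\delta\lambda<2^k<\delta^{-2}|\tau|^{-3}\lambda^{-2}$ survive. Their squared $L^2$ norm is $\lesssim|\tau|^{-3}\lambda^{-2}$, and Chebyshev then gives $|\{|I|>\lambda\}|\lesssim|\tau|^{-3}\lambda^{-4}$. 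This is the paper's set $E(\tau,\lambda)=\{k\leq 1:\min\{2^k,2^{-k/2}|\tau|^{-3/2}\}>\delta\lambda\}$.

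In fact, for $\lambda\lesssim|\tau|^{-1}$ the lower cutoff you wrote is the inessential one: the low layers contribute squared $L^2$ norm $\simeq\lambda\lesssim|\tau|^{-3}\lambda^{-2}$. The upper cutoff is the indispensable one. With this correction your layer-cake computation goes through: use $\lambda^{-2}$ below $|\tau|^{-3/2}$, $|\tau|^{-3}\lambda^{-4}$ up to $C|\tau|^{-1}$, and the empty set above. This gives exactly $|\tau|^{-3}\log|\tau|$ for $\n{I}_{L^4}^4$.

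\textbf{A minor point on (1).} Your route to divergence-free $L^1$ approximants through a truncated $\mathbb{P}$ is unjustified, since $\mathbb{P}$ is not bounded on $L^1$. It is also unnecessary, because your proof of (2) uses only $\int v_0=0$ and $|x|v_0\in L^1$. Since $\int u_0=0$ (from $\xi\cdot\widehat{u_0}(\xi)=0$ and the continuity of $\widehat{u_0}$), you may take $w\in C_c^\infty$ close to $u_0$ in $L^1$ and set $v_0=w-(\int w)\theta$ with a fixed bump $\theta$, $\int\theta=1$. The paper avoids approximation altogether. It splits $\int(K(x-y)-K(x))u_0(y)\,dy$ into $|y|\leq R$ (mean value theorem, extra $t^{-1/2}$) and $|y|>R$ (small $L^1$ tail), and then lets $R\to\infty$.
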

We first focus on the dispersive effect of the evolution group $\{e^{i\tau \frac{D_3}{|D|}}\}_{\tau \in \R}$ in $L^p$ and show the following proposition.
\begin{prop}\label{prop:dips-1}
    There exists a positive constant $C$ such that 
    \begin{align}
        \n{\Delta_j 
        { e^{i\tau \frac{D_3}{|D|}}f }  }_{L^p}
        &
        \leq 
        C
        2^{3(1-\frac{1}{p})j}
        \mathcal{D}_p(\tau)
        \n{\Delta_j f}_{L^1},
        \\
        \n{\Delta_j 
        { e^{i\tau \frac{D_3}{|D|}}f}  }_{L^{4,\infty}}
        &
        \leq 
        C
        2^{3(1-\frac{1}{4})j}
        (1+|\tau|)^{-\frac{3}{4}}
        \n{\Delta_j f}_{L^1}
    \end{align}
    for all $\tau \in \R$, $j \in \Z$ and $f \in \mathscr{S}'(\R^3)$ with $\Delta_j f \in L^1(\R^3)$.
\end{prop}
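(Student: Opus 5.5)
By scaling we will reduce to $j=0$. The symbol $\xi_3/|\xi|$ is homogeneous of degree zero, so for $f_j(x)=f(2^{-j}x)$ we have $\Delta_j e^{i\tau D_3/|D|}f(x)=[\Delta_0 e^{i\tau D_3/|D|}f_j](2^jx)$. This produces exactly the factor $2^{3(1-\frac1p)j}$, and the same factor appears for $L^{4,\infty}$. Next pick $\widetilde\phi$ equal to one on $\supp\phi$ and write
\[
\Delta_0 e^{i\tau D_3/|D|}f = K_\tau * \Delta_0 f,\qquad
K_\tau=\mathscr F^{-1}\bigl[e^{i\tau\xi_3/|\xi|}\widetilde\phi(|\xi|)\bigr].
\]
By Young's inequality (which also holds for $L^{4,\infty}$, a normable space) it then suffices to prove $\|K_\tau\|_{L^p}\le C\mathcal D_p(\tau)$ and $\|K_\tau\|_{L^{4,\infty}}\le C(1+|\tau|)^{-3/4}$. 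For $|\tau|\le 1$ both bounds are trivial, since $K_\tau$ is a Schwartz function uniformly in $\tau$. So assume $|\tau|\ge1$.

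We will make the heuristic layer decomposition rigorous. Let $\chi$ be a smooth dyadic partition in the variable $s=|\xih|^2|\xi_3|/|\xi|^3\in(0,1]$ on the annulus. Set $K_\tau=\sum_{k\le 0}K_{\tau,k}$, where $K_{\tau,k}$ carries the cutoff $\chi(2^{-k}s)$. The terms with $2^k\le|\tau|^{-1}$ are lumped into a single piece $K_{\tau,\le}$, which is supported on a Fourier set of measure $\lesssim|\tau|^{-1}$. For each layer we need three bounds:
\[
\|K_{\tau,k}\|_{L^\infty}\lesssim \min\{2^k,|\tau|^{-3/2}2^{-k/2}\},\qquad \|K_{\tau,k}\|_{L^2}\lesssim 2^{k/2},\qquad |K_{\tau,k}(x)|\ \text{rapidly decaying off}\ \{|x|\lesssim|\tau|\}.
\]
The trivial bound $2^k$ is just the Fourier volume. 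For the stationary-phase bound we split each layer into the region near $\xi_3=0$ (where $|\xi_3|\sim2^k$) and the region near the vertical axis (where $|\xih|\sim2^{k/2}$). Then we rescale anisotropically, e.g.\ $\xi_3=2^k\eta_3$ in the first region, or $\xih=2^{k/2}\eta_{\rm h}$ in the second. After rescaling, the phase $\tau\xi_3/|\xi|$ becomes $\tau 2^{k}\psi_k(\eta)$ (respectively $\tau2^{k}\psi_k$) with a nondegenerate Hessian uniformly in $k$, and the amplitudes and their derivatives are uniformly bounded. Lemma \ref{lemm:sta} applied to the family $\{\psi_k\}$ then gives $|\tau 2^k|^{-3/2}$ times the Jacobian $2^k$, that is $|\tau|^{-3/2}2^{-k/2}$, valid when $|\tau|2^k\ge1$. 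Checking this rescaling carefully is the main technical obstacle. The delicate point is verifying that the rescaled phase family keeps its $C^6$ bounds and Hessian lower bound uniformly; the two degenerate directions degenerate at different rates, so the scaling must be chosen separately in each region. If one rescaling fails to normalize the Hessian, we would instead use a two-step approach: first stationary phase in the good directions, then van der Corput in the remaining one.

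Summing the layers in $L^p$ via the interpolated bound \eqref{I_k-L^p} gives the following. For $p<4$, the geometric sum over $|\tau|^{-1}\le2^k\le1$ is dominated by $k=0$ and yields $|\tau|^{-\frac32(1-\frac2p)}$. For $p>4$, it is dominated by $2^k\sim|\tau|^{-1}$ and yields $|\tau|^{-(1-\frac1p)}$; the lumped piece $K_{\tau,\le}$ obeys the same bound by $L^2$–$L^\infty$ interpolation. For $p=4$, every layer contributes $|\tau|^{-3/4}$, and there are $\log|\tau|$ of them. Summing naively in norm would give $\log$ rather than $\log^{1/4}$, so here we use the distribution-function argument instead. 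Fix $\lambda$ and split $K_\tau=\sum_{2^k\ge\mu}+\sum_{2^k<\mu}$, where $\mu=\mu(\lambda)$ is chosen so that the $L^\infty$ norm of the degenerate part, $\lesssim\mu$, is at most $\lambda/2$. The measure of $\{|\sum_{2^k\ge\mu}K_{\tau,k}|>\lambda/2\}$ is bounded by Chebyshev with a high exponent $q>4$, using the $L^q$ layer bounds, for which the sum over $2^k\ge\mu$ converges at $2^k=\mu$. Optimizing in $\mu$ gives $|\{|K_\tau|>\lambda\}|\lesssim\lambda^{-4}|\tau|^{-3}$ uniformly in $\lambda$, which is the $L^{4,\infty}$ bound. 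The strong $L^4$ bound with $(\log)^{1/4}$ then follows from the layer-cake formula $\|K\|_{L^4}^4=4\int\lambda^3|\{|K|>\lambda\}|\,d\lambda$. The integrand is $\lesssim|\tau|^{-3}\lambda^{-1}$ on the range $|\tau|^{-3/2}\lesssim\lambda\lesssim|\tau|^{-1}$, which has logarithmic length; outside this range we use the $L^2$ or $L^\infty$ bounds, together with the spatial localization $|\{K\ne\text{small}\}|\lesssim|\tau|^3$. This gives $\|K_\tau\|_{L^4}^4\lesssim|\tau|^{-3}\log|\tau|$, as required.
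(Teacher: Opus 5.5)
Your reduction to $j=0$, the Young inequality step, the anisotropic rescalings and the layer summation for $p\neq4$ are essentially the paper's route. The rescalings are $\xi_3=2^k\eta_3$ near the horizontal plane and $\xih=2^{k/2}\eta_{\rm h}$ near the vertical axis, the latter after removing the constant phase $e^{\pm i\tau}$ on each half-cone $\pm\xi_3>0$. The rescaling you were worried about does work: the rescaled Hessian determinants are $|\eta_{\rm h}|^2|\eta_3|(|\eta_{\rm h}|^2+2^{2k}\eta_3^2)^{-9/2}$ and $|\eta_{\rm h}|^2|\eta_3|(2^k|\eta_{\rm h}|^2+\eta_3^2)^{-9/2}$, both bounded below uniformly in $k$.

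The genuine gap is the $p=4$ distribution estimate, on which both the $L^{4,\infty}$ bound and the $(\log)^{1/4}$ bound rest. Your split is one-sided: only the layers $2^k<\mu$ go into the $L^\infty$ part, and requiring their sup to be $\le\lambda/2$ forces $\mu\lesssim\lambda$. On the relevant range $\lambda\lesssim|\tau|^{-1}$, the tail $\sum_{2^k\ge\mu}K_{\tau,k}$ therefore always contains the transition layers $2^k\simeq|\tau|^{-1}$. For $q>4$ the layer bounds give $\|\sum_{2^k\ge\mu}K_{\tau,k}\|_{L^q}\lesssim|\tau|^{-(1-\frac1q)}$ and nothing better, whatever $\mu$ is. (Using only the stationary-phase bound down to $2^k=\mu$ gives the even larger $\mu^{\frac2q-\frac12}|\tau|^{-\frac32(1-\frac2q)}$.) Chebyshev then yields at best
\[
\lambda^{-q}|\tau|^{-(q-1)}=\lambda^{-4}|\tau|^{-3}\,(\lambda|\tau|)^{-(q-4)},
\]
which loses $|\tau|^{(q-4)/2}$ at $\lambda\simeq|\tau|^{-3/2}$. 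With $q=4$ you lose $(\log|\tau|)^4$. With $q<4$ the nondegenerate layers near $k=0$ dominate and you lose $(\lambda|\tau|^{3/2})^{4-q}$. So no choice of $\mu$ and $q$ gives $|\tau|^{-3}\lambda^{-4}$, and feeding the bound you actually get into the layer-cake formula produces a power loss instead of $\log|\tau|$.

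The missing idea is a truncation on both sides that depends on $\lambda$. Besides the very degenerate layers $2^k\le\delta\lambda$, also move the nearly nondegenerate layers $2^k\ge\nu:=\delta^{-2}|\tau|^{-3}\lambda^{-2}$ into the $L^\infty$ part. This is possible because their sup norms $2^{-k/2}|\tau|^{-3/2}$ sum to $\lesssim\nu^{-1/2}|\tau|^{-3/2}=\delta\lambda$, even though their $L^2$ mass is large. On the remaining middle layers $\delta\lambda<2^k<\nu$ (the paper's set $E(\tau,\lambda)$), apply Chebyshev with exponent $2$ together with almost orthogonality: $\lambda^{-2}\sum_{2^k<\nu}2^k\lesssim\lambda^{-2}\nu\simeq|\tau|^{-3}\lambda^{-4}$. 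A $q>4$ Chebyshev would be fine on the layers $2^k\ge\nu$, but the middle layers require an exponent below $4$. With this bound your layer-cake computation goes through as you describe. The spatial localization is unnecessary, because the $L^2$ bound $|\{|K_\tau|>\lambda\}|\lesssim\lambda^{-2}$ already handles $\lambda\le|\tau|^{-3/2}$.
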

Let us focus on the estimates for the  oscillatory integral:
\begin{align}
    e^{i\tau \frac{D_3}{|D|}}\varphi(x)
    =
    \frac{1}{(2\pi)^3}
    \int_{\R^3}
    e^{i x \cdot \xi}
    e^{i \tau \Psi(\xi)}
    \widehat{\varphi}(\xi)\, d\xi,
\end{align}
where we have defined 
\begin{align}
    \widehat{\varphi}(\xi):=\widehat\varphi_{-1}(\xi) + \widehat\varphi_0(\xi) + \widehat\varphi_1(\xi),
    \qquad
    \Psi(\xi):=\frac{\xi_3}{|\xi|}.
\end{align}
Note that $\widehat \varphi$ is supported in $\{\xi \in \R^3;\ 1/4 \leq |\xi| \leq 4\}$ and it holds 
\begin{align}
    \det 
    \nabla^2 \Psi(\xi)
    =
    \frac{|\xih|^2\xi_3}{|\xi|^9}.
\end{align}
Let $\varrho \in C_c^\infty(\R ; [0,1])$ be an even function satisfying $\varrho(s) = 1$ for $|s| \leq 1/8$ and $\varrho(s) =0$ for $|s|\geq 1/4$. We define 
\begin{align}
    \rho^{\rm h}(\xi):=\varrho\sp{\frac{|\xih|}{|\xi|}},
    \qquad
    \rho^{\rm v}(\xi):=\varrho\sp{\frac{\xi_3}{|\xi|}}, 
    \qquad 
    \rho^{0}(\xi):=1 - \rho^{\rm h}(\xi) - \rho^{\rm v}(\xi)
\end{align}
for $\xi \in \R^3\setminus\{0\}$.
Observe that the supports of $\rho^{\rm h}$ and $\rho^{\rm v}$ are disjoint, and thus it holds $0 \leq \rho_0(\xi) \leq 1$.
Let us define
\begin{align}
    I^0(\tau,x)
    &:=
    \frac{1}{(2\pi)^3}
    \int_{\R^3}
    e^{i x \cdot \xi}e^{i \tau \Psi(\xi)}
    a^0(\xi)\, d\xi,
    \qquad
    a^0(\xi)
    :=\widehat \varphi (\xi) \rho^0(\xi),
    \\
    I_k^{\rm h}(\tau,x)
    &:=
    \frac{1}{(2\pi)^3}
    \int_{\R^3}
    e^{i x \cdot \xi}e^{i \tau \Psi(\xi)}
    a_k^{\rm h}(\xi)\, d\xi,
    \qquad 
    a_k^{\rm h}(\xi)
    :=\widehat \varphi (\xi) \rho^{\rm h}(\xi) \phi(2^{-k}|\xih|^2),
    \\
    I_k^{\rm v}(\tau,x)
    &:=
    \frac{1}{(2\pi)^3}
    \int_{\R^3}
    e^{i x \cdot \xi}e^{i \tau \Psi(\xi)}
    a_k^{\rm v}(\xi)\, d\xi,
    \qquad
    a_k^{\rm v}(\xi)
    :=\widehat \varphi (\xi) \rho^{\rm v}(\xi) \phi(2^{-k}|\xi_3|).
\end{align}
We then see that 
\begin{align}
    e^{i \tau \frac{D_3}{|D|}}\varphi(x)
    =
    I^0(\tau,x)
    +
    \sum_{k \leq 1}
    I_k^{\rm h}(\tau,x)
    +
    \sum_{k \leq 1}
    I_k^{\rm v}(\tau,x).
\end{align}
\begin{lemm}\label{lemm:I}
There exists a positive constant $C$ such that 
\begin{align}
    \n{I^0(\tau,\cdot)}_{L^p}
    &\leq 
    C|\tau|^{-\frac{3}{2}(1-\frac{2}{p})}, \label{est:I^0}
    \\
    \n{I_k^{\#}(\tau,\cdot)}_{L^p}
    &\leq 
    C\min\Mp{2^{(1-\frac{1}{p})k},2^{(\frac{2}{p}-\frac{1}{2})k}|\tau|^{-\frac{3}{2}(1-\frac{2}{p})}} 
\end{align}
for all $\# \in \{{\rm h},{\rm v}\}$, $k \leq 1$, $2 \leq p \leq \infty$, and $\tau \in \R$ with $|\tau| \geq 2$.
\end{lemm}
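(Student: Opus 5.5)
For each piece I will prove an $L^2$ bound and an $L^\infty$ bound, and then interpolate with $\|\cdot\|_{L^p}\le \|\cdot\|_{L^2}^{2/p}\|\cdot\|_{L^\infty}^{1-2/p}$.

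\emph{The piece $I^0$.} Plancherel gives $\|I^0(\tau)\|_{L^2}\le C$. On $\supp a^0$ we have $|\xi_{\rm h}|/|\xi|\ge 1/8$ and $|\xi_3|/|\xi|\ge 1/8$, with $1/4\le|\xi|\le 4$. Hence $|\det\nabla^2\Psi|=|\xi_{\rm h}|^2|\xi_3|/|\xi|^9$ is bounded below there. Lemma \ref{lemm:sta}, applied with a single phase $\Psi$ and amplitude $a^0$, then gives $\|I^0(\tau)\|_{L^\infty}\le C|\tau|^{-3/2}$. Interpolating yields \eqref{est:I^0}.

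\emph{The bounds $2^{(1-\frac1p)k}$.} The relevant Fourier supports are $\{|\xi_{\rm h}|^2\simeq 2^k,\ |\xi|\simeq1\}$ and $\{|\xi_3|\simeq 2^k,\ |\xi|\simeq 1\}$, both of measure $\simeq 2^k$. Hence $\|I_k^\#\|_{L^\infty}\le\|a_k^\#\|_{L^1}\le C2^k$ and $\|I_k^\#\|_{L^2}\le C2^{k/2}$. Interpolating gives $2^{k(1-\frac2p)}2^{\frac kp}=2^{(1-\frac1p)k}$.

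\emph{The dispersive bounds.} Interpolating $L^2\lesssim 2^{k/2}$ with the target $L^\infty\lesssim |\tau|^{-3/2}2^{-k/2}$ gives exactly $2^{(\frac2p-\frac12)k}|\tau|^{-\frac32(1-\frac2p)}$, so it suffices to prove
\begin{align}
\|I_k^\#(\tau)\|_{L^\infty}\le C|\tau|^{-\frac32}2^{-\frac k2}.
\end{align}
I will obtain this by rescaling the degenerate direction so that Lemma \ref{lemm:sta} applies uniformly in $k$.

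For $\#={\rm v}$, substitute $\xi_3=2^k\eta_3$, $\xi_{\rm h}=\eta_{\rm h}$, with $|\eta_3|\simeq1$ and $|\eta_{\rm h}|\simeq1$. Then $\tau\Psi(\xi)=\tau2^k\,\eta_3/\sqrt{|\eta_{\rm h}|^2+2^{2k}\eta_3^2}=:\tau 2^k\psi_k(\eta)$, and the Jacobian is $2^k$. The phases $\psi_k$ converge smoothly to $\eta_3/|\eta_{\rm h}|$, whose Hessian determinant is nonzero on $|\eta_3|\simeq1$, $|\eta_{\rm h}|\simeq 1$. The amplitudes $a_k^{\rm v}(\eta_{\rm h},2^k\eta_3)$ have $C^4$ norms bounded uniformly in $k$. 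Lemma \ref{lemm:sta}, with the effective frequency $\tau2^k$ and the rescaled $x$ absorbed into the $\sup_x$, gives $2^k(|\tau|2^k)^{-3/2}=|\tau|^{-3/2}2^{-k/2}$ when $|\tau|2^k\ge1$. When $|\tau|2^k<1$, the trivial bound $2^k$ is already smaller, so the min holds.

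For $\#={\rm h}$, write $|\xi_{\rm h}|\simeq 2^{k/2}=:\delta$ and use $\xi_{\rm h}=\delta\eta_{\rm h}$, $\xi_3=\eta_3$, with Jacobian $\delta^2=2^k$. Expanding, $\Psi=\sgn\eta_3\,(1-\tfrac{\delta^2|\eta_{\rm h}|^2}{2\eta_3^2}+O(\delta^4))$. Dropping the constant phase factor, $\tau\Psi=\mathrm{const}+\tau\delta^2\psi_k(\eta)$ with $\psi_k\to -\sgn\eta_3\,|\eta_{\rm h}|^2/(2\eta_3^2)$. This limit has nondegenerate Hessian for $|\eta_3|\simeq1$: the determinant is proportional to $|\eta_{\rm h}|^2/\eta_3^6$ up to a constant, so it is nonzero where $|\eta_{\rm h}|\simeq1$. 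Lemma \ref{lemm:sta} then gives $\delta^2(|\tau|\delta^2)^{-3/2}=|\tau|^{-3/2}2^{-k/2}$, the same bound as in the vertical case.

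\emph{Main obstacle.} The delicate step is checking the hypotheses of Lemma \ref{lemm:sta} uniformly in $k$. This means verifying that the rescaled phases $\psi_k$ converge in $C^6$ on a fixed compact set $K$ with a uniform lower bound on $|\det\nabla^2\psi_k|$, and that the amplitudes (built from $\rho^\#$, $\phi(2^{-k}\cdot)$ and $\widehat\varphi$) have uniform $C^4$ bounds after rescaling. Because $k\le1$, only finitely many large $k$ fall outside the asymptotic regime, and those are covered directly by the $I^0$-type argument.
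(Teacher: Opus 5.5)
Your proposal is correct and follows essentially the same route as the paper: Plancherel and interpolation reduce everything to $L^\infty$, the trivial bound $C2^k$ comes from the support measure, and the anisotropic rescalings $\xi_3=2^k\eta_3$ and $\xi_{\rm h}=2^{k/2}\eta_{\rm h}$ let Lemma \ref{lemm:sta} apply uniformly with effective frequency $2^k\tau$. In the horizontal case you split by $\sgn\xi_3$ and factor out $e^{\pm i\tau}$, which the paper also does. The only difference is in how uniformity is checked: the paper computes $|\det\nabla^2|$ of the rescaled phases exactly for every $k\le1$, whereas you use a limiting argument; incidentally, your limiting horizontal determinant has modulus $|\eta_{\rm h}|^2/\eta_3^8$ rather than being proportional to $|\eta_{\rm h}|^2/\eta_3^6$, which is harmless since $|\eta_3|\simeq1$.
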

\begin{proof}
It follows from the Plancherel theorem that 
\begin{align}
    \n{I^0(\tau,\cdot)}_{L^2}
    &=
    (2\pi)^{-\frac{3}{2}}
    \n{a^0}_{L^2},
    \\
    \n{I_k^{\#}(\tau,\cdot)}_{L^2}
    &=
    (2\pi)^{-\frac{3}{2}}
    \n{a_k^{\#}}_{L^2}
    \leq 
    C2^{\frac{1}{2}k}.
\end{align}
Hence, by the interpolation, the proof is reduced to the case of $p=\infty$.

\noindent
{\it Step 1. $L^\infty$ estimates for $I^0$.}
Since it holds $|\det \nabla^2 \Psi(\xi)| = |\xih|^2|\xi_3|/|\xi|^9\geq 2^{-21}$ for $\xi \in \supp a^0$, we see by Lemma \ref{lemm:sta} that 
\begin{align}
    \n{I^0(\tau,\cdot)}_{L^\infty} \leq C|\tau|^{-\frac{3}{2}}. 
\end{align}

\noindent
{\it Step 2. $L^\infty$ estimates for $I_k^{\rm h}$.}
We see that 
\begin{align}\label{est:I^h-1}
    \n{I_k^{\rm h}(\tau,\cdot)}_{L^\infty}
    \leq 
    \frac{1}{(2\pi)^3}
    \abso{\supp a_k^{\rm h}}
    \leq 
    C2^k.
\end{align}
It follows from the change of variable $(\eta_{\rm h},\eta_3)=\sp{2^{-\frac{1}{2}k}\xih,\xi_3}$ that
\begin{align}
    I_k^{\rm h}(\tau,x)
    =
    \frac{2^k}{(2\pi)^3}
    \sum_{\sigma \in \{\pm\}}
    e^{\sigma i \tau}
    \int_{\R^3}
    e^{i (2^{\frac{1}{2}k}x_{\rm h},x_3) \cdot \eta}
    e^{-\sigma i 2^k\tau \Psi_{\sigma,k}^{\rm h}(\eta)}
    b_{\sigma,k}^{\rm h}(\eta)\, d\eta,
\end{align}
where we have defined
\begin{align}
    \Psi_{\pm,k}^{\rm h}(\eta)
    :={}&
    2^{-k} \sp{1\mp\Psi\sp{2^{\frac{1}{2}k}\eta_h,\eta_3}}
    \\
    ={}&
    |\eta_{\rm h}|^2
    \sp{2^k|\eta_{\rm h}|^2 + \eta_3^2}^{-\frac{1}{2}}
    \Mp{
    |\eta_3|+\sp{2^k|\eta_{\rm h}|^2 + \eta_3^2}^{\frac{1}{2}}
    }^{-1},
    \\
    b_{\pm,k}^{\rm h}(\eta)
    :={}&
    \widehat 
    \varphi \sp{2^{\frac{1}{2}k}\eta_{\rm h},\eta_3} 
    \varrho \sp{2^{\frac{1}{2}k}|\eta_{\rm h}|\sp{2^k|\eta_{\rm h}|^2 + \eta_3^2}^{-\frac{1}{2}}} 
    \phi(|\eta_{\rm h}|^2)
    \bm{1}_{(0,\infty)}(\pm \eta_3).
\end{align}
Elementary computations yield
\begin{align}
    &
    \bigcup_{k \leq 1}
    \supp b_{\pm, k}^{\rm h} 
    \subset 
    K^{\rm h}:=\Mp{
    \eta \in \R^3\ ;\ 
    \frac{1}{2} \leq |\eta_{\rm h}|^2 \leq 2,\ \frac{\sqrt{15}}{16} \leq |\eta_3| \leq 4
    },
    \\
    &
    \sum_{m=0}^6
    \sup_{k \leq 1}
    \n{\nabla^m\Psi_{\pm,k}^{\rm h}}_{L^\infty(K^{\rm h})}
    <\infty,
    \qquad
    \sum_{m=0}^4
    \sup_{k \leq 1}
    \n{\nabla^mb_{\pm, k}^{\rm h}}_{L^\infty(K^{\rm h})}
    <\infty,
\end{align}
and
\begin{align}
    \abso{
    \det \nabla^2 \Psi_{\pm,k}^{\rm h}(\eta)
    }
    =
    |\eta_{\rm h}|^2|\eta_3|\sp{2^k|\eta_{\rm h}|^2 + \eta_3^2}^{-\frac{9}{2}}
    \geq \sqrt{15} \cdot 2^{-23}, 
    \qquad k \leq 1,\ \eta \in K^{\rm h}.
\end{align}
From Lemma \ref{lemm:sta}, we see that
\begin{align}\label{est:I^h-2}
    \n{I_k^{\rm h}(\tau,\cdot)}_{L^\infty}
    \leq 
    C2^k\sp{2^k|\tau|}^{-\frac{3}{2}}
    =
    C2^{-\frac{1}{2}k}|\tau|^{-\frac{3}{2}}.
\end{align}
Gathering \eqref{est:I^h-1} and \eqref{est:I^h-2}, we obtain
\begin{align}
    \n{I_k^{\rm h}(\tau,\cdot)}_{L^\infty}
    \leq 
    C
    \min \Mp{2^k,2^{-\frac{1}{2}k}|\tau|^{-\frac{3}{2}}}.
\end{align}

\noindent
{\it Step 3. $L^\infty$ estimates for $I_k^{\rm v}$.}
For the estimates of $I_k^{\rm v}$,
we have 
\begin{align}\label{est:I^v-1}
    \n{I_k^{\rm v}(\tau,\cdot)}_{L^\infty}
    \leq 
    \frac{1}{(2\pi)^3}
    \abso{\supp a_k^{\rm v}}
    \leq 
    C2^k
\end{align}
It holds by the change of variable $(\eta_{\rm h},\eta_3)=(\xih,2^{-k}\xi_3)$ that
\begin{align}
    I_k^{\rm v}(\tau,x)
    =
    \frac{2^k}{(2\pi)^3}
    \int_{\R^3}
    e^{i (x_{\rm h}, 2^kx_3) \cdot \eta}
    e^{i 2^k\tau \Psi_k^{\rm v}(\eta)}
    b_k^{\rm v}(\eta)\, d\eta,
\end{align}
where we have set 
\begin{align}
    \Psi_k^{\rm v}(\eta)
    &:=
    \eta_3\sp{|\eta_{\rm h}|^2 + 2^{2k}\eta_3^2}^{-\frac{1}{2}},
    \\
    b_k^{\rm v}(\eta)
    &:=
    \widehat \varphi (\eta_{\rm h},2^k\eta_3) 
    \varrho\sp{2^k\eta_3\sp{|\eta_{\rm h}|^2 + 2^{2k}\eta_3^2}^{-\frac{1}{2}}} 
    \phi(|\eta_3|).
\end{align}
By direct calculations, 
it holds
\begin{align}
    &
    \bigcup_{k \leq 1}
    \supp b_k^{\rm v} 
    \subset 
    K^{\rm v}:=\Mp{
    \eta \in \R^3\ ;\ 
    \frac{\sqrt{15}}{16} \leq |\eta_{\rm h}| \leq 4, \ \frac{1}{2} \leq |\eta_3| \leq 2
    },
    \\
    &
    \sum_{m=0}^6
    \sup_{k \leq 1}
    \n{\nabla^m\Psi_k^{\rm v}}_{L^\infty(K^{\rm v})}
    <\infty,
    \qquad
    \sum_{m=0}^4
    \sup_{k \leq 1}
    \n{\nabla^mb_k^{\rm v}}_{L^\infty(K^{\rm v})}
    <\infty,
\end{align}
and
\begin{align}
    \abso{
    \det \nabla^2 \Psi_k^{\rm v}(\eta)
    }
    =
    |\eta_{\rm h}|^2|\eta_3|\sp{|\eta_{\rm h}|^2+2^{2k}\eta_3^2}^{-\frac{9}{2}}
    \geq 15 \cdot 2^{-27}, 
    \qquad k \leq 1,\ \eta \in K^{\rm v}.
\end{align}
Then, by Lemma \ref{lemm:sta}, we have
\begin{align}\label{est:I^v-2}
    \n{I_k^{\rm v}(\tau,\cdot)}_{L^\infty}
    \leq 
    C2^k\sp{2^k|\tau|}^{-\frac{3}{2}}
    =
    C2^{-\frac{1}{2}k}|\tau|^{-\frac{3}{2}}.
\end{align}
Combining \eqref{est:I^v-1} and \eqref{est:I^v-2}, we have 
\begin{align}
    \n{I_k^{\rm v}(\tau,\cdot)}_{L^\infty}
    \leq 
    C
    \min \Mp{2^k,2^{-\frac{1}{2}k}|\tau|^{-\frac{3}{2}}}
\end{align}
and we complete the proof.
\end{proof}
Using Lemma \ref{lemm:I}, we obtain the following $L^p$ decay estimates for $e^{i\tau \frac{D_3}{|D|}}\varphi$.
\begin{lemm}\label{lemm:dis-decay}
    For $2 \leq p \leq \infty$, 
    there exists a positive constant $C=C(p)$ such that 
    \begin{align}
        \n{e^{i\tau \frac{D_3}{|D|}}\varphi}_{L^p}
        \leq 
        C\mathcal{D}_p(\tau),
        \qquad
        \n{e^{i\tau \frac{D_3}{|D|}}\varphi}_{L^{4,\infty}}
        \leq C(1+|\tau|)^{-\frac{3}{4}}. \label{est:L4}
    \end{align}
    for all $\tau \in \R$.
\end{lemm}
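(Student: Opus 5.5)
For $|\tau|\le 2$ the bounds are trivial: $\widehat\varphi\in C_c^\infty$, so $\|e^{i\tau D_3/|D|}\varphi\|_{L^p}\le \|e^{i\tau D_3/|D|}\varphi\|_{L^2}+\|e^{i\tau D_3/|D|}\varphi\|_{L^\infty}\le \|\widehat\varphi\|_{L^2}+\|\widehat\varphi\|_{L^1}\le C$, and $L^p\cap L^\infty\subset L^{4,\infty}$ handles the weak norm. From now on let $|\tau|\ge2$ and use the decomposition $e^{i\tau D_3/|D|}\varphi=I^0+\sum_{k\le1}(I_k^{\rm h}+I_k^{\rm v})$ together with Lemma \ref{lemm:I}. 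The term $I^0$ already satisfies $\|I^0\|_{L^p}\le C|\tau|^{-\frac32(1-\frac2p)}$. This is at most $C\mathcal D_p(\tau)$ for $p\le4$. For $p>4$ we have $\frac32(1-\frac2p)\ge 1-\frac1p$, which is equivalent to $p\ge4$, so it is again bounded by $C\mathcal D_p(\tau)$.

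\textbf{Strong $L^p$ bounds.} Apply the triangle inequality to the dyadic sums and let $k_\tau$ be defined by $2^{k_\tau}\simeq|\tau|^{-1}$.
For $k\le k_\tau$, use the bound $2^{(1-\frac1p)k}$; its sum is $\simeq|\tau|^{-(1-\frac1p)}$ when $p>2$.
For $k_\tau<k\le1$, use $2^{(\frac2p-\frac12)k}|\tau|^{-\frac32(1-\frac2p)}$.
\begin{itemize}
\item If $p<4$, the exponent is positive, so the sum is dominated by $k=1$ and gives $|\tau|^{-\frac32(1-\frac2p)}$. The small-$k$ part is $|\tau|^{-(1-\frac1p)}\le|\tau|^{-\frac32(1-\frac2p)}$.
\item If $p>4$, the exponent is negative, so the sum is dominated by $k=k_\tau$ and gives $|\tau|^{-\frac12+\frac2p}|\tau|^{-\frac32+\frac3p}=|\tau|^{-(1-\frac1p)}$.
\item If $p=4$, there are $\simeq\log|\tau|$ terms of size $|\tau|^{-3/4}$, which only gives $|\tau|^{-3/4}\log|\tau|$.
\end{itemize}
The case $p=2$ is Plancherel. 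This settles all $p\neq4$.

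\textbf{The case $p=4$ and $L^{4,\infty}$.} The sharp $\log^{1/4}$ and the $L^{4,\infty}$ bound cannot come from the triangle inequality. Interpolating the strong endpoints $p=3$ and $p=6$ (which have the pure power bounds above) in Lorentz spaces gives $L^{4,q}$ bounds without any log, but via real interpolation of the operator $\varphi\mapsto$ solution only with fixed $\varphi$, which is not linear in a useful way. So I would argue directly through the distribution function and write $F=\sum_k I_k$ (both families together). For a level $\lambda>0$, split $F=\sum_{k\le k_\lambda}I_k+\sum_{k>k_\lambda}I_k$, with the cut chosen so that the low sum satisfies $\sum_{k\le k_\lambda}\|I_k\|_{L^\infty}\le\lambda/2$.

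The $L^\infty$ bound $\min\{2^k,2^{-k/2}|\tau|^{-3/2}\}$ makes the sum over $k\le k_\tau$ of size $\lesssim|\tau|^{-1}$, and over $k>k_\tau$ geometric with total $\sum 2^{-k/2}|\tau|^{-3/2}$ dominated by its smallest $k$. Hence, for $\lambda\lesssim|\tau|^{-1}$, one can take $2^{k_\lambda}\simeq(|\tau|^{-3/2}/\lambda)^2$, i.e.\ the layers with $2^{-k/2}|\tau|^{-3/2}\ge\lambda$ are discarded. The high part is estimated in $L^2$ by Plancherel, using that the supports of the $a_k$ are almost disjoint: $\|\sum_{k>k_\lambda}I_k\|_{L^2}^2\lesssim\sum_{k>k_\lambda}2^k\lesssim1$. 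That is too crude, so I would instead use Chebyshev in $L^2$ restricted to the layers with $k\le k'$, and an $L^\infty$-small tail for $k>k'$.

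Concretely, for $\lambda\le C|\tau|^{-1}$ choose $k_1<k_2$ such that
\begin{itemize}
\item $\sum_{k<k_1}2^k\le\lambda/4$, i.e.\ $2^{k_1}\simeq\lambda$;
\item $\sum_{k>k_2}2^{-k/2}|\tau|^{-3/2}\le\lambda/4$, i.e.\ $2^{k_2}\simeq|\tau|^{-3}\lambda^{-2}$.
\end{itemize}
Then
\[
|\{|F|>\lambda\}|\le|\{|I^0|>\lambda/4\}|+\lambda^{-2}\Bigl\|\sum_{k_1\le k\le k_2}I_k\Bigr\|_{L^2}^2\lesssim|\{|I^0|>\lambda/4\}|+\lambda^{-2}2^{k_2}\simeq|\{|I^0|>\lambda/4\}|+|\tau|^{-3}\lambda^{-4}.
\]
Hence $\lambda^4|\{|F|>\lambda\}|\lesssim|\tau|^{-3}$ plus the $I^0$ contribution, which is controlled by $\|I^0\|_{L^4}^4\lesssim|\tau|^{-3}$. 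For $\lambda\gtrsim|\tau|^{-1}$ the set is empty once the constant is chosen, since $\|F\|_\infty\lesssim|\tau|^{-1}$. This yields $\|F\|_{L^{4,\infty}}\lesssim|\tau|^{-3/4}$. (Almost-orthogonality holds because the supports of $\phi(2^{-k}\cdot)$ overlap only for neighbouring $k$.)

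For the strong $L^4$ bound, integrate the same estimate: $\|F\|_4^4=4\int_0^{C/|\tau|}\lambda^3|\{|F|>\lambda\}|\,d\lambda$. In the $\lambda$-range where $\lambda\lesssim|\tau|^{-3/2}$ one has $k_2\ge1$, so the $L^2$ bound is the global one, giving measure $\lesssim\lambda^{-2}$ and contribution $\int_0^{|\tau|^{-3/2}}\lambda\,d\lambda\lesssim|\tau|^{-3}$. On $|\tau|^{-3/2}\le\lambda\le|\tau|^{-1}$ the bound $|\tau|^{-3}\lambda^{-4}$ gives $\int\lambda^{-1}\,d\lambda\cdot|\tau|^{-3}=|\tau|^{-3}\log|\tau|^{1/2}$. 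Taking fourth roots gives $|\tau|^{-3/4}\log^{1/4}$.

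The main obstacle is making the $L^2$ almost-orthogonality and level-set bookkeeping precise, uniformly over the h and v families.
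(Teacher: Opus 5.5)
Your proposal is correct and follows the paper's proof essentially step by step. It uses the trivial bound for bounded $|\tau|$ and the triangle inequality over the dyadic degeneracy layers for $p\neq 4$. For $p=4$ it runs the same level-set argument: your window $k_1\le k\le k_2$ is exactly the paper's set $E(\tau,\lambda)$, handled by Chebyshev plus $L^2$ almost orthogonality, followed by the layer-cake integral split at $\lambda=|\tau|^{-3/2}$.

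There are two harmless slips. In the case $p>4$, the intermediate factor should be $2^{(\frac2p-\frac12)k_\tau}\simeq|\tau|^{\frac12-\frac2p}$ rather than $|\tau|^{-\frac12+\frac2p}$, although your final exponent $-(1-\frac1p)$ is right. Your discarded aside is also wrong: interpolating the $p=3$ and $p=6$ bounds would give only $|\tau|^{-2/3}$ at $p=4$, not a log-free $|\tau|^{-3/4}$, because the decay exponent is concave in $1/p$.
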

\begin{proof}
    Since 
    we see for $2 \leq p \leq \infty$ that 
    \begin{align}
        \n{e^{i\tau \frac{D_3}{|D|}}\varphi}_{L^p} 
        \leq 
        \n{e^{i\tau \frac{D_3}{|D|}}\varphi}_{L^2}^{\frac{2}{p}}
        \n{e^{i\tau \frac{D_3}{|D|}}\varphi}_{L^\infty}^{1-\frac{2}{p}}
        \leq
        C
        \| \varphi \|_{L^2}^{\frac{2}{p}}
        \| \widehat \varphi \|_{L^1}^{1-\frac{2}{p}},
    \end{align}
    it suffices to consider only for sufficiently large $|\tau|$.
    Moreover, \eqref{est:I^0} implies that it is enough to focus on the estimates for 
    \begin{align}
        I^\#(\tau,x):={}&
        \sum_{k \leq 1}I_k^{\#}(\tau,x)
        ={}
        \frac{1}{(2\pi)^3}
        \int_{\R^3}
        e^{ix\cdot \xi}
        e^{i\tau \Psi(\xi)}
        a_{k}^\#(\xi)\, 
        d\xi,
        \qquad
        \# \in \{{\rm h},{\rm v}\}.
    \end{align}
    
    \noindent
    {\it Step 1. The $L^p$ estimates with $p \neq 4$.}
    It follows from Lemma \ref{lemm:I} that 
    \begin{align}
        \n{I^\#(\tau,\cdot)}_{L^p} 
        \leq{}&
        \sum_{k \leq 1}
        \n{I_k^{\#}(\tau,\cdot)}_{L^p}
        \\
        \leq{}&
        C
        \sum_{k \leq 1}\min\Mp{2^{(1-\frac{1}{p})k},2^{(\frac{2}{p}-\frac{1}{2})k}|\tau|^{-\frac{3}{2}(1-\frac{2}{p})}}
        \\
        \leq{}&
        C
        \sum_{2^k \leq |\tau|^{-1}}
        2^{(1-\frac{1}{p})k}
        +
        C|\tau|^{-\frac{3}{2}(1-\frac{2}{p})}
        \sum_{|\tau|^{-1} < 2^k \leq 2}
        2^{(\frac{2}{p}-\frac{1}{2})k}
        \\
        \leq{}&
        C\mathcal{D}_p(\tau).
    \end{align}
    
    \noindent
    {\it Step 2. The $L^{4,\infty}$ estimates.}
    Let $\delta$ be a positive constant to be determined later and define
    \begin{align}
        \mathbb{Z}_{\leq 1}
        :=
        \Mp{k \in \mathbb{Z}\ ;\ k \leq 1},
        \qquad
        E(\tau,\lambda)
        :=
        \Mp{
        k \in \mathbb{Z}_{\leq 1}\ ;\ 
        \min \Mp{2^k,2^{-\frac{1}{2}k}|\tau|^{-\frac{3}{2}}} > \delta \lambda
        }.
    \end{align}
    We then see that 
    \begin{align}
        \n{\sum_{k \in \mathbb{Z}_{\leq 1} \setminus E(\tau,\lambda)}I_k^{\#}(\tau,\cdot)}_{L^\infty}
        &
        \leq 
        \sum_{k \in \mathbb{Z}_{\leq 1} \setminus E(\tau,\lambda)}
        \n{I_k^{\#}(\tau,\cdot)}_{L^\infty}
        \\
        &\leq
        C
        \sum_{k \in \mathbb{Z}_{\leq 1} \setminus E(\tau,\lambda)}
        \min \Mp{2^k,2^{-\frac{1}{2}k}|\tau|^{-\frac{3}{2}}}
        \\
        &\leq 
        C\delta \lambda.
    \end{align}
    Since the above constant $C$ may be chosen independent of $\delta$, choosing $\delta=(2C)^{-1}$ yields 
    \begin{align}
        \n{\sum_{k \in \mathbb{Z}_{k \leq 1} \setminus E(\tau,\lambda)}I_k^{\#}(\tau,\cdot)}_{L^\infty}
        \leq  
        \frac{\lambda}{2}.
    \end{align}
    This implies 
    \begin{align}\label{include}
        \Mp{x \in \R^3\ ;\ \abso{I^{\#}(\tau,x)} > \lambda}
        \subset 
        \Mp{x \in \R^3\ ;\ \abso{\sum_{k \in E(\tau,\lambda)}I_k^{\#}(\tau,\cdot)}>\frac{\lambda}{2}}
    \end{align}
    Thus, we see by \eqref{include}, Chebyshev inequality, Lemma \ref{lemm:I} with $p=2$, and the almost orthogonality that 
    \begin{align}\label{est:dist-1}
        \abso{\Mp{x \in \R^3\ ;\ \abso{I^{\#}(\tau,x)} > \lambda}}
        \leq{}&
        \abso{\Mp{x \in \R^3\ ;\ \abso{\sum_{k \in E(\tau,\lambda)}I_k^{\#}(\tau,\cdot)}>\frac{\lambda}{2}}}
        \\
        \leq{}&
        \frac{4}{\lambda^2}
        \n{\sum_{k \in E(\tau,\lambda)}I_k^{\#}(\tau,\cdot)}_{L^2}^2
        \\
        \leq{}&
        \frac{C}{\lambda^2}
        \sum_{k \in E(\tau,\lambda)}\n{I_k^{\#}(\tau,\cdot)}_{L^2}^2
        \\
        \leq{}&
        \frac{C}{\lambda^2}
        \sum_{2^k < \delta^{-2}|\tau|^{-3}\lambda^{-2}}2^k
        \\
        \leq{}&
        C|\tau|^{-3}\lambda^{-4}.
    \end{align}
    Hence, we obtain 
    \begin{align}
        \n{I^\#(\tau,\cdot)}_{L^{4,\infty}}
        =
        \sup_{\lambda>0}
        \lambda
        \abso{\Mp{x \in \R^3\ ;\ \abso{I^{\#}(\tau,x)} > \lambda}}^{\frac{1}{4}}
        \leq 
        C|\tau|^{-\frac{3}{4}}.
    \end{align}
    
    \noindent
    {\it Step 3. The $L^{4}$ estimate.}
    The $L^\infty$ estimate for $I^\#$ yields that 
    \begin{align}\label{est:dist-0}
        \abso{\Mp{x \in \R^3\ ;\ \abso{I^\#(\tau,x)} > \lambda}}
        =0
        \qquad
        {\rm for}
        \quad
        \lambda>C|\tau|^{-1}.
    \end{align}
    By the Chebyshev inequality and the Plancherel theorem, we have 
    \begin{align}\label{est:dist-2}
        \begin{split}
        \abso{\Mp{x \in \R^3\ ;\ \abso{I^\#(\tau,x)} > \lambda}}
        &
        \leq
        \n{I^\#(\tau,\cdot)}_{L^2}^2
        \lambda^{-2}
        \\
        &=
        (2\pi)^{-3}
        \n{\widehat \varphi \rho^\#}_{L^2}^2
        \lambda^{-2}
        =C\lambda^{-2}.
        \end{split}
    \end{align}
    Hence, from \eqref{est:dist-1}, \eqref{est:dist-0}, \eqref{est:dist-2}, and the layer-cake theorem, it follows that 
    \begin{align}
        \n{I^\#(\tau,\cdot)}_{L^4}^4
        ={}&
        4
        \int_0^{C|\tau|^{-1}}
        \lambda^4
        \abso{\Mp{x \in \R^3\ ;\ \abso{I^\#(\tau,x)} > \lambda}}
        \, \frac{d\lambda}{\lambda}
        \\
        \leq{}&
        C
        \int_0^{|\tau|^{-\frac{3}{2}}}
        \lambda\, 
        d\lambda
        +
        C
        |\tau|^{-3}
        \int_{|\tau|^{-\frac{3}{2}}}^{C|\tau|^{-1}}
        \frac{d\lambda}{\lambda}
        \\
        \leq{}&
        C|\tau|^{-3}\log |\tau|.
    \end{align}
    This completes the proof.
\end{proof}
Now, we are in a position to prove Proposition \ref{prop:dips-1}.
\begin{proof}[Proof of Proposition \ref{prop:dips-1}]
    By the almost orthogonality of the Littlewood--Paley decomposition, we have 
    \begin{align}
        \Delta_j
        e^{i \tau \frac{D_3}{|D|}}
        f(x)
        =
        2^{3j}
        e^{i \tau \frac{D_3}{|D|}} \varphi (2^j \cdot)* 
        \Delta_j f.
    \end{align}
    Thus, we see by the Young inequality and Lemma \ref{lemm:dis-decay} that 
    \begin{align}
        \n{\Delta_j
        e^{i \tau \frac{D_3}{|D|}}
        f}_{L^p}
        &
        \leq 
        2^{3j}
        \n
        {e^{i \tau \frac{D_3}{|D|}} \varphi (2^j \cdot)}_{L^p} 
        \n{\Delta_j f}_{L^1}
        \\
        &=
        2^{3(1-\frac{1}{p})j}
        \n
        {e^{i \tau \frac{D_3}{|D|}} \varphi}_{L^p} 
        \n{\Delta_j f}_{L^1}
        \\
        &\leq 
        C
        2^{3(1-\frac{1}{p})j}
        \mathcal{D}_p(\tau)
        \n{\Delta_j f}_{L^1}.
    \end{align}
    For the $L^{4,\infty}$ estimate, we have by the same token that 
    \begin{align}
        \n{\Delta_j
        e^{i \tau \frac{D_3}{|D|}}
        f}_{L^{4,\infty}}
        &
        \leq 
        C
        2^{3j}
        \n
        {e^{i \tau \frac{D_3}{|D|}} \varphi (2^j \cdot)}_{L^{4,\infty}} 
        \n{\Delta_j f}_{L^1}
        \\
        &=
        C2^{3(1-\frac{1}{4})j}
        \n
        {e^{i \tau \frac{D_3}{|D|}} \varphi}_{L^{4,\infty}} 
        \n{\Delta_j f}_{L^1}
        \\
        &\leq 
        C
        2^{3(1-\frac{1}{4})j}
        (1+|\tau|)^{-\frac{3}{4}}
        \n{\Delta_j f}_{L^1}.
    \end{align}
    Thus, we complete the proof.
\end{proof}
Next, we provide the $L^p$ decay estimates for the semigroup $\Mp{e^{t\Delta}e^{i\tau \Psi(D)}}_{t>0}$.
\begin{prop}\label{prop:I-decay}
    Let $2 \leq p \leq \infty$, $m \in \N \cup \{0\}$.
    Then, there exists a positive constant $C=C(p,m)$ such that 
    \begin{align}
        \n{\nabla^m
        e^{t\Delta}e^{i\tau \Psi(D)}f}_{L^p}
        &\leq 
        Ct^{-\frac{3}{2}(1-\frac{1}{p})-\frac{m}{2}}\mathcal{D}_p(\tau)
        \n{f}_{\dB_{1,\infty}^0},
        \\
        \n{\nabla^m
        e^{t\Delta}e^{i\tau \Psi(D)}f}_{L^{4,\infty}}
        &\leq 
        Ct^{-\frac{3}{2}(1-\frac{1}{4})-\frac{m}{2}}(1+|\tau|)^{-\frac{3}{4}}
        \n{f}_{\dB_{1,\infty}^0}
    \end{align}
    for all $t>0$, $\tau \in \R$ and $f \in \dB_{1,\infty}^0(\R^3)$.
\end{prop}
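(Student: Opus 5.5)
We decompose $f$ by Littlewood--Paley blocks and apply Proposition \ref{prop:dips-1} block by block, using the heat factor to sum over $j$. Since $\dB_{p,1}^0 \hookrightarrow L^p$, it suffices to bound
\begin{align}
    \sum_{j \in \Z}
    \n{\Delta_j \nabla^m e^{t\Delta}e^{i\tau \Psi(D)} f}_{L^p}.
\end{align}
We write $\Delta_j = \widetilde{\Delta}_j \Delta_j$ with $\widetilde{\Delta}_j := \Delta_{j-1}+\Delta_j+\Delta_{j+1}$. Because $\nabla^m e^{t\Delta}$ commutes with the Fourier multiplier $e^{i\tau\Psi(D)}$, we get
\begin{align}
    \Delta_j \nabla^m e^{t\Delta}e^{i\tau\Psi(D)} f
    = \Delta_j e^{i\tau\Psi(D)}\bigl(\widetilde{\Delta}_j \nabla^m e^{t\Delta} f\bigr).
\end{align}
Proposition \ref{prop:dips-1}, applied with $\widetilde{\Delta}_j\nabla^m e^{t\Delta}f$ in place of $f$, then gives
\begin{align}
    \n{\Delta_j \nabla^m e^{t\Delta}e^{i\tau\Psi(D)} f}_{L^p}
    \leq C 2^{3(1-\frac1p)j}\mathcal{D}_p(\tau)\n{\Delta_j\nabla^m e^{t\Delta} f}_{L^1}.
\end{align}
The same bound holds with $\Delta_j$ replaced by $\widetilde{\Delta}_j$ on the right-hand side, which causes only harmless index shifts.

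Next we use the classical heat estimate on frequency-localized functions. Its kernel $\nabla^m e^{t\Delta}\widetilde\varphi_j$ has $L^1$ norm at most $C2^{mj}e^{-ct2^{2j}}$, so
\begin{align}
    \n{\Delta_j\nabla^m e^{t\Delta}f}_{L^1} \leq C 2^{mj}e^{-ct2^{2j}}\n{\Delta_j f}_{L^1}
    \leq C2^{mj}e^{-ct2^{2j}}\n{f}_{\dB^0_{1,\infty}}.
\end{align}
Summing over $j$ leaves
\begin{align}
    \mathcal{D}_p(\tau)\n{f}_{\dB^0_{1,\infty}}\sum_{j\in\Z}2^{(3(1-\frac1p)+m)j}e^{-ct2^{2j}}.
\end{align}
Since the exponent $3(1-\frac1p)+m$ is positive, this series is bounded by $Ct^{-\frac32(1-\frac1p)-\frac m2}$: split at $2^{2j}\simeq t^{-1}$ and treat the low-frequency side as a geometric series and the high-frequency side through the exponential decay. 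Equivalently, one can cite Lemma \ref{lemm:heat} with $s_1=0$ and $s_2=3(1-\frac1p)+m$, after first moving the dispersive factor and the $2^{3(1-1/p)j}$ weight inside. This proves the $L^p$ estimate. The case $p=2$ is consistent because $\mathcal{D}_2\equiv1$.

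For the $L^{4,\infty}$ estimate, $L^{4,\infty}$ is not normable by the quasi-norm given, but it admits an equivalent norm (Grafakos). Under that norm the triangle inequality over $j$ holds up to a constant, and the same summation goes through with the second inequality of Proposition \ref{prop:dips-1}. Alternatively, one can apply Step 2 of Lemma \ref{lemm:dis-decay} directly to the rescaled kernel $2^{3j}e^{i\tau\Psi(D)}\varphi(2^j\cdot)$ weighted by the heat factor.

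The only delicate point is this summation in the quasi-Banach space $L^{4,\infty}$. Using the equivalent norm $\sup_{E}|E|^{-3/4}\int_E|g|$ removes the difficulty, since it is subadditive, so the argument is otherwise routine.
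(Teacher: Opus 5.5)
Your proof is correct and follows the same route as the paper: a Littlewood--Paley decomposition, Proposition~\ref{prop:dips-1} on each block, the localized heat factor $2^{mj}e^{-ct2^{2j}}$, and the summation $\sum_j 2^{(m+3(1-\frac1p))j}e^{-ct2^{2j}}\lesssim t^{-\frac32(1-\frac1p)-\frac m2}$. The differences are cosmetic: you apply the heat bound in $L^1$ before the dispersive estimate rather than in $L^p$ after it, and you spell out the passage to an equivalent norm on $L^{4,\infty}$, which the paper leaves to ``similarly''.
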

\begin{proof}
    By the Littlewood--Paley decomposition and the Bernstein inequality, we deduce from Proposition \ref{prop:dips-1} that  
    \begin{align}
        \n{\nabla^m e^{t\Delta}e^{i\tau \Psi(D)}f}_{L^p}
        \leq{}&
        \sum_{j \in \Z}
        \n{ \nabla^m e^{t\Delta}  \Delta_j {e^{i\tau \Psi(D)}  f} }_{L^p}
        \\
        \leq{}& 
        C
        \sum_{j \in \Z}
        2^{mj}e^{-c2^{2j}t}
        \n{ \Delta_j  { e^{i\tau \Psi(D)}  f} }_{L^p}
        \\
        \leq{}&
        C
        \mathcal{D}_p(\tau)
        \sum_{j \in \Z}
        2^{\{m+3(1-\frac{1}{p})\}j}e^{-c2^{2j}t}
        \n{\Delta_j f}_{L^1}
        \\
        \leq{}&
        Ct^{-\frac{3}{2}(1-\frac{1}{p})-\frac{m}{2}}\mathcal{D}_p(\tau)
        \n{f}_{\dB_{1,\infty}^0}.
    \end{align}
    The $L^{4,\infty}$ estimate is obtained similarly.
    Thus, we complete the proof.
\end{proof}
\begin{proof}[Proof of Theorem \ref{thm:lin-decay}]
    Let
    \begin{align}
        K_\Om^\pm(t,x)
        :=
        \mathscr{F}^{-1}
        \lp{
        e^{\pm i\Om t \frac{\xi_3}{|\xi|}}
        e^{-t|\xi|^2}
        P_{\pm}(\xi)
        }(x).
    \end{align}
    Since $u_0 \in L^1(\R^3)$ and $\div u_0=0$, the fact shown by Miyakawa \cite{Miy-98} yields
    \begin{align}
        \int_{\R^3} u_0(y)\, dy = 0.
    \end{align}
    Thus, we have 
    \begin{align}
        \mathcal{T}_\Om(t) u_0(x)
        &
        =
        \sum_{\sigma \in \{\pm\}}
        \int_{\R^3}
        K_\Om^\sigma(t,x-y)u_0(y)\, dy
        \\
        &
        =
        \sum_{\sigma \in \{\pm\}}
        \int_{\R^3}
        \sp{
        K_\Om^\sigma(t,x-y)-K_\Om^\sigma(t,x)
        }
        u_0(y)\, dy,
    \end{align}
    which implies 
    \begin{align}\label{eq:T-L^p}
        \n{\nabla^m\mathcal{T}_\Om(t) u_0}_{L^p}
        \leq 
        \sum_{\sigma \in \{\pm\}}
        \int_{\R^3}
        \n{
        \nabla^mK_\Om^\sigma(t,\cdot -y)-\nabla^mK_\Om^\sigma(t,\cdot)
        }_{L^p}
        |u_0(y)|\, dy. 
    \end{align}
    For $R>0$, we deduce from Proposition \ref{prop:I-decay} that  
    \begin{align}
        \n{\nabla^m\mathcal{T}_\Om u_0}_{L^p}
        \leq {}&
        \sum_{\sigma \in \{\pm\}}
        \int_{|y| \leq R}
        \int_0^1\n{
        \nabla^{m+1} K_\Om^\sigma(t,\cdot -\theta y)
        }_{L^p}\,
        d\theta
        |y|
        |u_0(y)|\, dy\\
        &
        +
        2\sum_{\sigma \in \{\pm\}}\n{\nabla^mK_\Om^{\sigma}(t,\cdot)}_{L^p}\int_{|y| \geq R}|u_0(y)|\, dy
        \\
        \leq {}&
        CR
        t^{-\frac{3}{2}(1-\frac{1}{p})-\frac{m+1}{2}}\mathcal{D}_p(|\Om| t)
        \int_{\R^3}
        |u_0(y)|\, dy\\
        &
        +
        C
        t^{-\frac{3}{2}(1-\frac{1}{p})-\frac{m}{2}}\mathcal{D}_p(|\Om| t)
        \int_{|y| \geq R}|u_0(y)|\, dy,
    \end{align}
    which implies 
    \begin{align}
        \limsup_{t \to \infty}
        t^{\frac{3}{2}(1-\frac{1}{p})+\frac{m}{2}}
        \mathcal{D}_p(|\Om| t)^{-1}
        \n{\nabla^m \mathcal{T}_{\Om}(t)u_0}_{L^p}
        \leq C\int_{|y| \geq R}|u_0(y)|\, dy.
    \end{align}
    Letting $R \to \infty$, we complete the first assertion.
    For the second assertion, we have by \eqref{eq:T-L^p} that
    \begin{align}
        \n{\nabla^m \mathcal{T}_\Om u_0}_{L^p}
        \leq {}&
        \sum_{\sigma \in \{\pm\}}
        \int_{\R^3}
        \int_0^1\n{
        \nabla^{m+1} K_\Om^\sigma(t,\cdot -\theta y)
        }_{L^p}\,
        d\theta
        |y|
        |u_0(y)|\, dy\\
        \leq {}&
        Ct^{-\frac{3}{2}(1-\frac{1}{p})-\frac{m+1}{2}}
        \mathcal{D}_p(|\Om| t)
        \int_{\R^3} |y| |u_0(y)|\, dy.
    \end{align}
    This completes the proof.
\end{proof}

\subsection{Sharpness of decay rates}
In this subsection, we show that the $L^p$ decay rates of the linear solution obtained in the above section is optimal.
\begin{thm}\label{Thm:sharp-lin}
    Let $\Om=1$ and define a divergence-free vector field  $u_0 \in \mathscr{S}(\R^3)$ by
    \begin{align}
        u_0(x):=-16^{-1}\pi^{-\frac{3}{2}}e^{-\frac{|x|^2}{4}}x^{\perp},
        \qquad
        x^\perp:=\sp{x_2,-x_1,0}
    \end{align}
    for $x \in \R^3$.
    Then, for any $2 \leq p \leq \infty$,
    there exist positive constants $T_0=T_0(p)$ and $c=c(p)$ such that 
    \begin{align}
        \n{\mathcal{T}_1(t)u_0}_{L^p}
        &
        \geq ct^{-\frac{3}{2}(1-\frac{1}{p})-\frac{1}{2}}
        \mathcal{D}_p(t),
        \\
        \n{\mathcal{T}_1(t)u_0}_{L^{4,\infty}}
        &
        \geq ct^{-\frac{3}{2}(1-\frac{1}{4})-\frac{1}{2}}
        (1+t)^{-\frac{3}{4}}
    \end{align}
    for all $t \geq T_0$.
\end{thm}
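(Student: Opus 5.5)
The plan is to compute $\mathcal{T}_1(t)u_0$ explicitly on the Fourier side, rescale away the heat factor, and reduce everything to a \emph{lower} bound for a dispersive integral $e^{i\tau\Psi(D)}g$ with a fixed Schwartz amplitude $g$ and $\tau=t$. The lower bound then comes from the asymptotic expansion in Lemma \ref{lemm:asy}, applied to the same nondegenerate and degenerate layers that gave the upper bound in Lemma \ref{lemm:I}.

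\emph{Step 1: explicit form and rescaling.} Since $\mathscr{F}[e^{-|x|^2/4}]=(4\pi)^{3/2}e^{-|\xi|^2}$, the normalisation gives
\begin{align}
\widehat{u_0}(\xi)=i(\xi_2,-\xi_1,0)e^{-|\xi|^2}.
\end{align}
Because $R(\xi)v=\xi\times v/|\xi|$, we get $R(\xi)\widehat{u_0}(\xi)=i|\xi|^{-1}(\xi_1\xi_3,\xi_2\xi_3,-|\xih|^2)e^{-|\xi|^2}$. Hence
\begin{align}
\widehat{\mathcal{T}_1(t)u_0}(\xi)=e^{-(1+t)|\xi|^2}\Bigl(\cos\bigl(t\Psi(\xi)\bigr)I-\sin\bigl(t\Psi(\xi)\bigr)R(\xi)\Bigr)\widehat{u_0}(\xi).
\end{align}
The amplitude is homogeneous of degree one times a Gaussian, and $\Psi$ is homogeneous of degree zero. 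Substituting $\xi=\eta/\sqrt{1+t}$ therefore yields
\begin{align}
\n{\mathcal{T}_1(t)u_0}_{L^p}=(1+t)^{-\frac32(1-\frac1p)-\frac12}\n{G(t,\cdot)},
\end{align}
and the same identity holds in $L^{4,\infty}$. Here $G(t,\cdot)$ is a finite combination of $e^{\pm it\Psi(D)}g_\pm$, with $g_\pm$ Schwartz amplitudes that are fixed, independent of $t$, explicit, and nonvanishing away from $\{\xih=0\}$. In particular $g_\pm$ does not vanish near the horizontal plane $\{\xi_3=0\}$. It remains to prove $\n{G(t)}_{L^p}\gtrsim\mathcal{D}_p(t)$ and $\n{G(t)}_{L^{4,\infty}}\gtrsim t^{-3/4}$. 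I will use the $\cos$ component, i.e.\ the first two coordinates, which reduces to $e^{\pm it\Psi}$ acting on $\xi^\perp e^{-|\xi|^2}$. The two signs $\pm$ have stationary points at reflected positions and are separated in $x$, so they can be handled one at a time.

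\emph{Step 2: the range $2\le p<4$ (nondegenerate part).} Write $x=ty$ with $y$ in a small compact set $K$ chosen so that $\Xi(y)$, the stationary point of $\xi\mapsto y\cdot\xi+\Psi(\xi)$ on a fixed radial shell, lies in $\supp\rho^0$, and so that the amplitude there is nonzero. Since $\Psi$ is degree-zero homogeneous, the stationary set is a ray. To make it a point, I localize the amplitude to a shell $|\xi|\simeq 1$ with a smooth cutoff. The remaining radial parts are then handled by nonstationary phase, or by choosing $K$ so that only one radius is stationary, since $\nabla\Psi(\xi)$ scales like $|\xi|^{-1}$. On $\{x=ty\}$, Lemma \ref{lemm:asy} gives $|G(t,x)|\ge ct^{-3/2}$. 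The pieces away from $\Xi(y)$ are $O(t^{-N})$ there by integration by parts. Integrating over a set of measure $\simeq t^3$ gives $\n{G}_{L^p}\gtrsim t^{-\frac32+\frac3p}=t^{-\frac32(1-\frac2p)}$. The same set gives the weak bound $\n{G}_{L^{4,\infty}}\gtrsim t^{-3/2}\cdot t^{3/4}=t^{-3/4}$.

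\emph{Step 3: the range $p\ge4$ (degenerate vertical layers).} Use the rescaling of Step 3 of Lemma \ref{lemm:I}, $\xi_3=2^k\eta_3$, with $2^k\simeq t^{-1}$. There the phase becomes $2^kt\Psi_k^{\rm v}(\eta)$, whose oscillation parameter is $O(1)$, and the amplitude is nonvanishing. I will show that for $|\xh|\le c$ and $|x_3|\le ct$ the layer sum is $\gtrsim t^{-1}$. Concretely, $\sum_{2^k\lesssim t^{-1}}I_k^{\rm v}$ equals $\int e^{ix\cdot\xi}e^{it\xi_3/|\xi|}a\,d\xi$ over $|\xi_3|\lesssim t^{-1}$. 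In that region the factor $e^{i(x_3+t/|\xi|)\xi_3}$ is a perturbation of $1$ when $|x_3|\le ct$, so the real part stays $\gtrsim t^{-1}$. The contributions of the layers with $2^k\gg t^{-1}$ and of $I^0$ at such $x$ are shown to be smaller, again by stationary phase with their stationary points located at $|x_3|\simeq 2^{-k}t$-type positions. This gives $\n{G}_{L^p}\gtrsim t^{-1}t^{1/p}$.

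For $p=4$, I apply Lemma \ref{lemm:asy} uniformly in $k$ to the rescaled phases $\Psi_k^{\rm v}$. This yields $|I_k^{\rm v}(t,x)|\simeq 2^{-k/2}t^{-3/2}$ on sets $S_k$ of measure $\simeq 2^{2k}t^2\cdot 2^{-k}t=2^kt^3$, namely the image of $K$ under $x_{\rm h}\sim t$, $x_3\sim 2^{-k}t$. These sets are essentially disjoint for dyadically separated $k$ in $t^{-1}\lesssim 2^k\lesssim 1$. Each contributes $\simeq t^{-3}$ to $\n{G}_{L^4}^4$, and summing over $\simeq\log t$ layers gives the logarithm.

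\emph{Main obstacle.} The hardest point is Step 3. I must show that on each $S_k$ the $k$-th layer genuinely dominates the sum of all other layers and of $I^0$, uniformly in $k$ and with an error $o(2^{-k/2}t^{-3/2})$. My plan is to use the uniform version of Lemma \ref{lemm:asy}, whose error is $(2^kt)^{-5/2}$ relative to scale $2^k$, together with non-stationary phase bounds for $I_{k'}$ on $S_k$ when $|k'-k|$ is large. For the finitely many neighbouring $k'$, I will merge them into the $k$-th piece before applying the asymptotics.
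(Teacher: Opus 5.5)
\textbf{There is a genuine gap in your Step 3 for $4<p\le\infty$.} The mechanism you propose for the lower bound $|G|\gtrsim t^{-1}$ does not work, and the idea that replaces it is missing.

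On $\{|\xi_3|\lesssim t^{-1}\}$ the factor $e^{i(x_3+t/|\xi|)\xi_3}$ is not a perturbation of $1$ for $|x_3|\le ct$. The term $t\xi_3/|\xi|$ is of order one there. The term $x_3\xi_3$ can cancel it only when $x_3\approx -t/|\xi|$, and for fixed $x$ this happens only on a thin shell of $|\xih|$.

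If you instead cut at $|\xi_3|\le\epsilon t^{-1}$ to make the factor close to $1$, the main term shrinks to $O(\epsilon t^{-1})$. The layers $\epsilon t^{-1}\lesssim 2^k\lesssim Mt^{-1}$ are then left over. Each has size up to $\simeq t^{-1}$, and they are neither small nor non-stationary.

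The location is also off. For $e^{it\Psi}$, vertical layer $k$ is stationary at $|\xh|\simeq 2^kt$, $x_3\simeq -t$, not at $|x_3|\simeq 2^{-k}t$. For $|x_3|\ll t$ the degenerate part is far below $t^{-1}$.

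What actually happens is this. At $\xh=0$, $x_3<0$, the stationary set of $x\cdot\xi+t\Psi(\xi)$ is the whole circle $\{\xi_3=0,\ |\xih|=t/|x_3|\}$. Integrating across the transitional range of $\xi_3$ gives $t^{-1}$ times the average of the amplitude over this circle, weighted by $e^{i\xh\cdot\xih}$.

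For the horizontal components you chose, the amplitude at $\xi_3=0$ is $\frac{i}{2}\xih^\perp e^{-|\xih|^2}$. It is odd, so this circle average vanishes on the axis, and there is no positive real part to exploit near $\xh=0$.

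The paper supplies exactly this computation. It uses the vertical component, whose amplitude $|\xih|^2/|\xi|$ is positive, and passes to polar coordinates. Via $H(\rho,\tau)$, the identity $\int_\R H(\rho,\tau)\,d\tau=2\pi J(\rho)$ and the substitution $\tau=t(ry_3-1)$, it extracts the leading term $\frac{2\pi}{ty_3^4}e^{-1/y_3^2}J(t|\yh|/y_3)$. This is $\simeq t^{-1}$ on $|\yh|\le\delta t^{-1}$, $1\le y_3\le2$, which is a set of measure $\simeq t$ in your variables.

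\textbf{Two further points.}

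First, the stationary set of $\xi\mapsto y\cdot\xi+\Psi(\xi)$ is not a ray. Since $\nabla\Psi$ is even and homogeneous of degree $-1$, for $y\in K$ it consists of two antipodal points $\pm\Xi(y)$ on the same shell. Their contributions carry phases $\pm t\phi(y)$ and can interfere. So Lemma \ref{lemm:asy} alone does not give a pointwise bound $|G|\ge ct^{-3/2}$.

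The paper handles this by computing $V=V^++V^-$ explicitly. The sum is a real multiple of $\sin\Theta\,y^\perp/|\yh|+\cos\Theta\,y/|y|$, a combination of orthogonal unit vectors, so $|V|$ is independent of $t$. A single component carries a factor $\sin\Theta$ or $\cos\Theta$ and would need an averaging argument.

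Also, the first two coordinates are not the $\cos$ part: they contain $-\sin(t\Psi)\,i\xih\xi_3/|\xi|$ as well.

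Second, your $p=4$ plan is a genuinely different and workable route. It uses uniform nondegenerate asymptotics layer by layer, with non-stationary bounds $2^{k'}(2^{\max(k,k')}t)^{-N}$ for distant layers, summed over $\simeq\log t$ disjoint sets. The paper instead gets the logarithm from $\int_1^{ct^{1/8}}|J(\rho)|^4\rho\,d\rho\simeq\log t$ near the axis.

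Your bookkeeping is wrong, however. Layer $k$ is stationary on $S_k\approx\{|\xh|\simeq 2^kt,\ |x_3|\simeq t\}$, of measure $\simeq 2^{2k}t^3$; this is forced by $\n{I_k^{\rm v}}_{L^2}^2\simeq 2^k$. With your measure $2^kt^3$, each layer would contribute $2^{-k}t^{-3}$. The sum would then be $\simeq t^{-2}$, contradicting the upper bound $\n{G}_{L^4}^4\lesssim\mathcal{D}_4(t)^4\simeq t^{-3}\log t$.

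The antipodal interference also has to be handled inside each layer, for example by working with the full vector.
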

\begin{proof}
Since a direct computation yields $\widehat{u_0}(\xi)=e^{-|\xi|^2}i\xi^{\perp}$,
it holds
\begin{align}
    \mathcal{T}_1(t)u_0(x)
    &=
    \frac{1}{(2\pi)^3}
    \sum_{\sigma \in \{ \pm \}}
    \int_{\R^3}
    e^{ix\cdot \xi}
    e^{\sigma i t \Psi(\xi)}
    e^{-(t+1)|\xi|^2}
    P_\sigma(\xi)i\xi^\perp\, d\xi\\
    &=
    \frac{1}{(t+1)^2}
    U\sp{ t, t^{-1}(t+1)^{-\frac{1}{2}}x },
\end{align}
where we have set 
\begin{align}
    &
    U(t,z)
    :=
    \sum_{\sigma \in \{ \pm \}}
    \frac{1}{(2\pi)^3}
    \int_{\R^3}
    e^{it \Phi^{\sigma}(y,\xi)}
    e^{-|\xi|^2}
    P_{\sigma}(\xi)i\xi^\perp\, d\xi,
    \\
    &\quad
    \Phi^\pm(y,\xi):=y\cdot \xi \pm \Psi(\xi).
\end{align}
Then, we have 
\begin{align}
    \n{\mathcal{T}_1(t)u_0}_{L^p}
    =
    \sp{t+1}^{-\frac{3}{2}(1-\frac{1}{p})-\frac{1}{2}}
    t^{\frac{3}{p}}
    \n{U(t,\cdot)}_{L^p},
\end{align}
which implies that it suffices to show
\begin{align}
    \n{U(t,\cdot)}_{L^p}
    \geq 
    \begin{cases}
        ct^{-\frac{3}{2}} & (2 \leq p < 4),\\
        ct^{-\frac{3}{2}}(\log t)^{\frac{1}{4}} & (p=4),\\
        ct^{-1-\frac{2}{p}} & (4 < p \leq \infty),
    \end{cases}
    \qquad
    \n{U}_{L^{4,\infty}} \geq ct^{-\frac{3}{2}}.
\end{align}

\noindent
{\it Step 1. $L^p$ estimates with $2 \leq p <4$ and $L^{4,\infty}$ estimate.}
Let 
\begin{align}
    K:=
    \Mp{
    y \in \R^3
    \ ;\ 
    \frac{1}{2} \leq | \yh | \leq 1,
    \quad
    -1 \leq y_3 \leq -\frac{1}{2}
    }.
\end{align}
Fix a $y \in K$.
Then, the stationary points of $\Phi^\pm(y,\cdot)$ are given by 
\begin{align}
    \Mp{\xi \in \R^3\ ;\ \nabla_\xi \Phi^\sigma (y,\xi) = 0}
    =
    \begin{cases}
        \Mp{\Xi^+(y),\Xi^-(y)} & (\sigma = +), \\
        \varnothing & (\sigma = -),
    \end{cases}
\end{align}
where we have put
\begin{align}
\Xi^\pm(y)
:=
\sp{\pm \frac{y_1y_3^2}{|\yh||y|^3},\pm \frac{y_2y_3^2}{|\yh||y|^3},\mp \frac{|\yh|y_3}{|y|^3}}.
\end{align}
Let $\chi \in C_c^{\infty}(\R^3;[0,1])$ and $\chi(0)=1$ and $\supp \chi \subset \{\xi \in \R^3\ ;\ |\xi| \leq 1\}$.
We define
\begin{align}
    \chi^\pm(\xi,y):=
    \chi \sp{1000(\xi-\Xi^\pm(y))},
    \qquad
    \chi^{{\rm r}}(\xi,y):=1-\chi^+(\xi,y)-\chi^-(\xi,y).
\end{align}
Along this decomposition, we decompose $U^+$ as
\begin{align}
    &
    U(t,y)
    =
    U^{+}(t,y)
    +
    U^-(t,y)
    +
    U^{{\rm r}}(t,y),
    \\
    &\quad
    \begin{aligned}
    U^{\pm}(t,y)
    :={}&
    \frac{1}{(2\pi)^3}
    \int_{\R^3}
    e^{it \Phi^{+}(y,\xi)}
    e^{-|\xi|^2}
    P_{+}(\xi)i\xi^\perp
    \chi^\pm(\xi,y)
    \, d\xi,
    \\
    U^{{\rm r}}(t,y)
    :={}&
    \frac{1}{(2\pi)^3}
    \int_{\R^3}
    e^{it \Phi^{+}(y,\xi)}
    e^{-|\xi|^2}
    P_{+}(\xi)i\xi^\perp
    \chi^{{\rm r}}(\xi,y)
    \, d\xi
    \\
    &
    +
    \frac{1}{(2\pi)^3}
    \int_{\R^3}
    e^{it \Phi^-(y,\xi)}
    e^{-|\xi|^2}
    P_{-}(\xi)i\xi^\perp
    \, d\xi.
    \end{aligned}
\end{align}
Lemmas \ref{lemm:asy} yields
\begin{align}
   \n{U^{\pm}(t,\cdot)-V^{\pm}(t,\cdot)}_{L^\infty(K)}
   \leq 
   Ct^{-\frac{5}{2}}.
\end{align}
for all $\sigma \in \{ \pm \}$ and $t \geq 1$, where
\begin{align}
    V^\pm(t,y)
    :=
    \frac
    {
    e^{it \Phi^+(y,\Xi^\pm(y))}
    e^{i\frac{\pi}{4} \sgn \nabla^2 \Psi(\Xi^\pm(y))}
    }
    {(2\pi t)^{\frac{3}{2}}|\det \nabla^2 \Psi(\Xi^\pm(y))|^\frac{1}{2}}
    e^{-|\Xi^\pm(y)|^2}
    P_{+}(\Xi^\pm(y))i\Xi^\pm(y)^\perp.
\end{align}
Since direct calculations show that 
\begin{align}
   |\det \nabla^2 \Psi(\Xi^\pm(y))|
   =
   \frac{|\yh||y|^9}{|y_3|^4}, 
   \qquad
   \sgn \nabla^2 \Psi(\Xi^\pm(y)) = \mp 1
\end{align}
and
\begin{align}
    |\Xi^\pm(y)|^2=\frac{y_3^2}{|y|^4},
    \qquad
    P_{+}(\Xi^\pm(y))i\Xi^\pm(y)^\perp
    =
    \frac{y_3^2}{2|y|^3}
    \sp{
    \pm 
    i
    \frac{y^\perp}{|\yh|}
    -
    \frac{y}{|y|}
    },
\end{align}
we may write $V(t,y)=V^+(t,y)+V^-(t,y)$ explicitly as 
\begin{align}
   V(t,y)
   ={}&
   \frac{1}{(2\pi t)^{\frac{3}{2}}}
   \frac{y_3^4}{|\yh|^{\frac{1}{2}}|y|^{\frac{15}{2}}}
   \exp\sp{-\frac{y_3^2}{|y|^4}}
   \sp{
   \sin \Theta(t,y)
   \frac{y^\perp}{|\yh|}
   +
   \cos \Theta(t,y)
   \frac{y}{|y|}
   },
\end{align}
where
\begin{align}
   \Theta(t,y):=\frac{|\yh|}{|y|}t-\frac{\pi}{4}.
\end{align}
Then, we see that 
\begin{align}
   |V(t,y)|
   =
   \frac{1}{(2\pi t)^{\frac{3}{2}}}
   \frac{y_3^4}{|\yh|^{\frac{1}{2}}|y|^{\frac{15}{2}}}
   \exp\sp{-\frac{y_3^2}{|y|^4}},
\end{align}
which provides
\begin{align}
   \n{V(t,\cdot)}_{\mathcal{L}^p(K)}
   =
   c_pt^{-\frac{3}{2}},
\end{align}
where $\mathcal{L}^p$ stands for $L^p$ for $2 \leq p <4$ and $L^{4,\infty}$ for $p=4$.
For the remainder term, since the phase function $\Phi^+(y,\cdot)$ has no stationary points on the support of $\chi^{{\rm r}}(\cdot,y)$ and $\Phi^-(y,\cdot)$ does not possess stationary points for $y \in K$, performing integration by parts three times yields
\begin{align}
    \n{U^{{\rm r}}(t,\cdot)}_{L^\infty(K)}
   \leq
   Ct^{-3}.
\end{align}
Hence, we obtain 
\begin{align}
   \n{U(t,\cdot)}_{\mathcal{L}^p}
   &\geq
   \n{U(t,\cdot)}_{\mathcal{L}^p(K)}
   \\
   &
   \begin{aligned}
    \geq 
    \n{V(t,\cdot)}_{\mathcal{L}^p(K)}
    &-
    \sum_{\sigma \in \{\pm\}}
    \n{U^{\sigma}(t,\cdot)-V^\sigma(t,\cdot)}_{L^\infty(K)}|K|^{\frac{1}{p}}
    \\
    &
    -
    \n{U^{{\rm r}}(t,\cdot)}_{L^\infty(K)}|K|^{\frac{1}{p}}
   \end{aligned}
   \\
   &\geq
   c_pt^{-\frac{3}{2}} - Ct^{-\frac{5}{2}}-Ct^{-3}
   \\
   &\geq
   \frac{c_p}{2}t^{-\frac{3}{2}}
\end{align}
for sufficiently large $t$.

\noindent
{\it Step 2. The $L^p$ estimates with $4<p \leq \infty$.}
Let us consider the vertical component
\begin{align}
    U_3(t,y)
    =
    \frac{1}{(2\pi)^3}
    \sum_{\sigma \in \{\pm\}}
    \frac{\sigma}{2}
    \int_{\R^3} 
    e^{i t \Phi^\sigma(y,\xi)}e^{-|\xi|^2}\frac{|\xih|^2}{|\xi|}\, d\xi
\end{align}
for $t \geq 1$ and $y \in \R^3$ with $1 \leq y_3 \leq 2$.
Using the $3$D polar coordinate $\xi=(r\sin \eta \cos \theta,r \sin \eta \sin \theta,r\cos \eta)$ and suitable rotation of the horizontal coordinate, we see that 
\begin{align}
    U_3(t,y)
    &={}
    \frac{1}{(2\pi)^3}
    \sum_{\sigma \in \{\pm\}}
    \frac{\sigma}{2}
    \int_0^\infty
    \int_0^\pi 
    \int_{-\pi}^{\pi}
    e^{it \phi^\sigma (y,r,\eta,\theta)}
    e^{-r^2}r^3\sin^3\eta 
    dr d\eta d \theta,
    \\ 
    &={}
    \frac{1}{(2\pi)^2}
    \sum_{\sigma \in \{\pm\}}
    \frac{\sigma}{2}
    \int_0^\infty
    r^3e^{-r^2}
    \int_0^\pi 
    J(tr|\yh|\sin \eta)
    e^{it(ry_3 \sigma 1)\cos \eta}
    \sin^3\eta \,
    dr d\eta\\
    &
    =
    \frac{1}{2(2\pi)^2}\sp{I^+(t,y)-I^-(t,y)},
\end{align}
where we have put
\begin{align}
    \phi^\pm(y,r,\eta,\theta)
    &:=
    r|\yh|\sin \eta \cos \theta+(ry_3 \pm 1)\cos \eta,
    \\
    J(\rho)
    &:=\frac{1}{2\pi} \int_{-\pi}^\pi e^{i\rho\cos \theta}\, d\theta
\end{align}
and 
\begin{align}
    I^\pm(t,y)
    :={}&
    \int_0^\infty
    r^3e^{-r^2}H(tr|\yh|,t(ry_3\pm 1))
    \, dr,
    \\
    H(\rho,\tau)
    :={}&
    \int_{0}^\pi
    J(\rho\sin \eta)
    e^{i\tau\cos \eta}
    \sin^3\eta \, d\eta\\
    ={}&
    \int_{-1}^1
    h(\rho,\zeta)
    e^{i\tau\zeta}
     \, d\zeta,\\
     h(\rho,\zeta)
     :={}&
     (1-\zeta^2)J\sp{\rho\sqrt{1-\zeta^2}}.
\end{align}

Let us investigate the properties of $H$.
By the integration by parts, we have 
\begin{align}
    H(\rho,\tau)
    ={}&
    -\frac{1}{i\tau}
    \int_{-1}^1
    \partial_\zeta h(\rho,\zeta) e^{i\tau\zeta}\, 
    d\zeta\\
    ={}&
    \frac{1}{\tau^2}
    \Big[\partial_\zeta h(\rho,\zeta) e^{i\tau\zeta}\Big]_{\zeta=-1}^{\zeta=1}
    -
    \frac{1}{\tau^2}
    \int_{-1}^1
    \partial^2_\zeta h(\rho,\zeta) e^{i\tau\zeta}\, 
    d\zeta\\
    ={}&
    \frac{4\cos \tau}{\tau^2}
    +
    \frac{2}{\tau^2}
    \int_{-1}^1
    J(\rho\sqrt{1-\zeta^2})
    e^{i\tau\zeta}\, 
    d\zeta
    \\
    &
    -
    \frac{\rho}{\tau^2}
    \int_{-1}^1
    \frac{4\zeta^2-1}{\sqrt{1-\zeta^2}}J'(\rho\sqrt{1-\zeta^2})
    e^{i\tau\zeta}\, 
    d\zeta
    \\
    &
    -
    \frac{\rho^2}{\tau^2}
    \int_{-1}^1
    \zeta^2J''(\rho\sqrt{1-\zeta^2})
    e^{i\tau\zeta}\, 
    d\zeta.
\end{align}
From this, we see that 
\begin{align}
    &
    \partial_\rho H(\rho,\tau)
    ={}
    -
    \frac{2\rho}{\tau^2}
    \int_{-1}^1
    \frac{\zeta}{\sqrt{1-\zeta^2}}
    J'(\rho\sqrt{1-\zeta^2})
    e^{i\tau\zeta}\, 
    d\zeta
    \\
    &\quad 
    -
    \frac{1}{\tau^2}
    \int_{-1}^1
    \frac{4\zeta^2-1}{\sqrt{1-\zeta^2}}J'(\rho\sqrt{1-\zeta^2})
    e^{i\tau\zeta}\, 
    d\zeta
    -
    \frac{\rho^2}{\tau^2}
    \int_{-1}^1
    \sp{4\zeta^2-1}J'(\rho\sqrt{1-\zeta^2})
    e^{i\tau\zeta}\, 
    d\zeta
    \\
    &
    \quad 
    -
    \frac{2\rho}{\tau^2}
    \int_{-1}^1
    \zeta^2J''(\rho\sqrt{1-\zeta^2})
    e^{i\tau\zeta}\, 
    d\zeta
    -
    \frac{\rho^2}{\tau^2}
    \int_{-1}^1
    \zeta^2\sqrt{1-\zeta^2}
    J'''(\rho\sqrt{1-\zeta^2})
    e^{i\tau\zeta}\, 
    d\zeta.
\end{align}
Then, using $|J(s)|+|J'(s)|+|J''(s)|+|J'''(s)| \lesssim 1$ and $|J'(s)| \lesssim |s|$,
we have
\begin{align}\label{est:H}
    |H(\rho,\tau)| \leq C \frac{1+\rho^2}{1+\tau^2},
    \qquad
    |\partial_\rho H(\rho,\tau)| \leq C \frac{|\rho|+\rho^2}{1+\tau^2}.
\end{align}
Moreover,
since
$H(\rho,\tau)=\mathscr{F}\lp{h(\rho,\cdot)\bm{1}_{[-1,1]}}(\tau)$,
it holds
\begin{align}
    \int_{\R}
    H(\rho,\tau)\, d\tau 
    =
    2\pi h(\rho,0)
    =
    2\pi J(\rho).
\end{align}

For the estimate of $I^-(\tau,y)$, the change of the variable $\tau=t(ry_3-1)$ yields
\begin{align}
    I^-(t,y)
    =
    \frac{1}{ty_3}
    \int_{-t}^\infty
    g\sp{\frac{\tau+t}{ty_3}}
    H\sp{\frac{(\tau+t)|\yh|}{y_3},\tau}\, 
    d\tau,
\end{align}
where $g(r):=r^3e^{-r^2}$.
Then, we see that
\begin{align}
    &
    I^-(t,y)
    -
    \frac{2\pi}{ty_3^4}
    \exp\sp{-\frac{1}{y_3^2}}
    J\sp{\frac{t|\yh|}{y_3}}
    \\
    &\quad
    =
    I^-(t,y)
    -
    \frac{1}{ty_3}
    g\sp{\frac{1}{y_3}}
    \int_{\R}
    H\sp{\frac{t|\yh|}{y_3},\tau}\, d\tau 
    \\
    &
    \quad
    =E_1(t,y)+E_2(t,y)+E_3(t,y)+E_4(t,y),
\end{align}
where 
\begin{align}
    E_1(t,y)
    &:=
    \frac{1}{ty_3}
    g\sp{\frac{1}{y_3}}
    \int_{-\infty}^{-t}
    H\sp{\frac{t|\yh|}{y_3},\tau}\,
    d\tau,
    \\
    E_2(t,y)
    &:=
    \frac{1}{ty_3}
    \int_{-t}^t
    \sp{
    g\sp{\frac{\tau+t}{ty_3}}
    -
    g\sp{\frac{1}{y_3}}
    }
    {H\sp{\frac{t|\yh|}{y_3},\tau}}
    \, d\tau,
    \\
    E_3(t,y)
    &:=
    \frac{1}{ty_3}
    \int_{-t}^t
    g\sp{\frac{\tau+t}{ty_3}}
    \sp{
    H\sp{\frac{(\tau+t)|\yh|}{y_3},\tau}
    -
    H\sp{\frac{t|\yh|}{y_3},\tau}}
    \, d\tau
    \\
    E_4(t,y)
    &:=
    \frac{1}{ty_3}
    \int_{t}^{\infty}
    g\sp{\frac{\tau+t}{ty_3}}
    H\sp{\frac{(\tau+t)|\yh|}{y_3},\tau}
    -
    g\sp{\frac{1}{y_3}}
    H\sp{\frac{t|\yh|}{y_3},\tau}\, d\tau,
\end{align}
Making use of \eqref{est:H}, we have
\begin{align}
    &
    \begin{aligned}
    \sum_{m=1,4}
    \sup_{1 \leq y_3 \leq 2}
    |E_m(t,y)|
    \leq {}&
    C
    \frac{1+t^2|\yh|^2}{t}
    \int_{|\tau|\geq t}
    \frac{1}{1+\tau^2}\, 
    d\tau \\    
    \leq {}&
    C\frac{1+t^2|\yh|^2}{t^2}
    \end{aligned}
    \\
    &
    \begin{aligned}
    \sup_{1 \leq y_3 \leq 2}
    |E_2(t,y)|
    \leq {}&
    \int_{-t}^t
    \frac{|\tau|}{t}
    \abso{H\sp{\frac{t|\yh|}{y_3},\tau}}
    \, d\tau
    \\
    \leq{}&
    C
    \frac{1+t^2|\yh|^2}{t^2}
    \int_{-t}^{t}
    \frac{|\tau|}{1+\tau^2}\, 
    d\tau\\
    \leq{}&
    C(1+t^2|\yh|^2)
    \frac{\log t}{t^2},
    \end{aligned}
    \\
    &
    \begin{aligned}
    \sup_{1 \leq y_3 \leq 2}
    |E_3(t,y)|
    \leq {}&
    \int_{-t}^t
    \frac{|\tau||\yh|}{t}
    \sup_{0 \leq \rho \leq 2t|\yh|/y_3}
    \abso{\partial_\rho H\sp{\rho,\tau}}
    \, d\tau
    \\
    \leq{}&
    C
    \int_{-t}^{t}
    \frac{|\tau||\yh|}{t}
    \frac{t|\yh|(1+t|\yh|)}{1+\tau^2}\, 
    d\tau\\
    \leq{}&
    C
    |\yh|^2(1+t|\yh|)
    \log t.
    \end{aligned}
\end{align}
Thus, we obtain
\begin{align}
    &
    \sup_{1\leq y_3 \leq 2}
    \abso{
    I^-(t,y)
    -
    \frac{2\pi}{ty_3^4}
    \exp\sp{-\frac{1}{y_3^2}}
    J\sp{\frac{t|\yh|}{y_3}}
    }
    \\
    &\quad 
    \leq 
    C(1+t^2|\yh|^2)\frac{\log t}{t^2} 
    + 
    C|\yh|^2(1+t|\yh|)\log t.
\end{align}
For the estimate of $I^+(t,y)$, we have by \eqref{est:H} that 
\begin{align}
    |I^+(t,y)|
    \leq {}&
    \int_0^\infty
    r^3e^{-r^2}|H(tr|\yh|,t(ry_3+ 1))|
    \, dr\\
    \leq {}&
    C
    \int_0^\infty
    r^3e^{-r^2}
    \frac{1+t^2r^2|\yh|^2}{1+t^2(ry_3+1)^2}
    \, dr\\
    \leq {}&
    \frac{C}{t^2}
    \int_0^\infty
    r^3e^{-r^2}
    \sp{1+t^2r^2|\yh|^2}
    \, dr
    =
    C\frac{1+t^2|\yh|^2}{t^2}.
\end{align}
Let $K_{t,\delta}:=\Mp{y \in \R^3\ ;\ |\yh| \leq \delta t^{-1},\ 1 \leq y_3 \leq 2}$ for $0<\delta \leq 1$.
Then,  
we have 
\begin{align}
    \n{U(t,\cdot)}_{L^p}
    \geq{}&
    \n{U_3(t,\cdot)}_{L^p(K_{t,\delta})}
    \\
    \geq{}& 
    c\n{
    \frac{2\pi}{ty_3^4}
    \exp\sp{-\frac{1}{y_3^2}}
    J\sp{\frac{t|\yh|}{y_3}}}_{L^p(K_{t,\delta})}\\
    &
    -
    C
    \n{
    I^-(t,y)
    -
    \frac{2\pi}{ty_3^4}
    \exp\sp{-\frac{1}{y_3^2}}
    J\sp{\frac{t|\yh|}{y_3}}
    }_{L^\infty(K_{t,\delta})}|K_{t,\delta}|^{\frac{1}{p}}
    \\
    &
    -
    C\n{I^+(t,\cdot)}_{L^\infty(K_{t,\delta})}|K_{t,\delta}|^{\frac{1}{p}}
    \\
    \geq{}&
    c\sp{\inf_{|r|\leq C\delta}|J(r)|
    -C\frac{\log t}{t}-\frac{C}{t}}\frac{|K_{t,\delta}|^{\frac{1}{p}}}{t}
    \\
    \geq{}&
    ct^{-1-\frac{2}{p}}
\end{align}
for $t \geq e$ and sufficiently small $\delta$.

\noindent
{\it Step 3. The $L^4$ estimate.}
Let $K_t:=\{y \in \R^3\ ;\ |\yh| \leq t^{-\frac{7}{8}},\ 1 \leq y_3 \leq 2\}$.
By the similar calculation as the last estimate in the above step, we have 
\begin{align}
    \n{U(t,\cdot)}_{L^4}
    \geq{}&
    \n{U_3(t,\cdot)}_{L^4(K_t)}
    \\
    \geq{}& 
    c\n{
    \frac{2\pi}{ty_3^4}
    \exp\sp{-\frac{1}{y_3^2}}
    J\sp{\frac{t|\yh|}{y_3}}}_{L^4(K_t)}\\
    &
    -
    C
    \n{
    I^-(t,y)
    -
    \frac{2\pi}{ty_3^4}
    \exp\sp{-\frac{1}{y_3^2}}
    J\sp{\frac{t|\yh|}{y_3}}
    }_{L^\infty(K_{t})}|K_t|^{\frac{1}{4}}
    \\
    &
    -
    C\n{I^+(t,\cdot)}_{L^\infty(K_t)}|K_t|^{\frac{1}{4}}
    \\
    \geq{}&
    \frac{c}{t}\n{J\sp{\frac{t|\yh|}{y_3}}}_{L^4(K_t)}
    -C\frac{\log t}{t^{\frac{13}{8}}}|K_t|^{\frac{1}{4}}
    -\frac{C}{t^{\frac{7}{4}}}|K_t|^{\frac{1}{4}}
    \\
    \geq{}&
    \frac{c}{t}\n{J\sp{\frac{t|\yh|}{y_3}}}_{L^4(K_t)}
    -C\frac{\log t}{t^{\frac{33}{16}}}.
\end{align}
Changing the horizontal variable to the polar coordinate, we estimate the leading term as 
\begin{align}
    \n{J\sp{\frac{t|\yh|}{y_3}}}_{L^4(K_t)}^4
    &=
    2\pi
    \int_1^2
    \int_0^{t^{-7/8}}
    \abso{J\sp{\frac{tr}{y_3}}}^4r\, drdy_3\\
    &=
    \frac{2\pi}{t^2}
    \int_1^2
    y_3^2
    \int_0^{t^{1/8}/y_3}
    \abso{J\sp{\rho}}^4\rho\, d\rho dy_3\\
    &\geq 
    \frac{c}{t^2}
    \int_1^{ct^{1/8}}
    \abso{J\sp{\rho}}^4\rho\, d\rho.
\end{align}
Since $J$ is the zeroth order Bessel function, its asymptotic expansion is given by 
\begin{align}
    \abso{J(\rho) - \sp{\frac{2}{\pi \rho}}^{\frac{1}{2}}\cos\sp{\rho-\frac{\pi}{4}}} \leq C\rho^{-\frac{3}{2}}
\end{align}
for $\rho \geq 1$; see \cite{Gra-14}*{Section B.8}.
Using this and $\cos^4 \theta = (3+4\cos 2\theta + \cos 4 \theta )/8$, we have
\begin{align}
    \int_1^{ct^{1/8}}
    \abso{J\sp{\rho}}^4\rho\, d\rho
    \geq{}& 
    c
    \int_1^{ct^{1/8}}
    \cos^4\sp{\rho-\frac{\pi}{4}}
    \frac{d\rho}{\rho}
    -C\int_1^{\infty}\rho^{-5}\, d\rho 
    \\
    \geq{}&
    c
    \int_1^{ct^{1/8}}
    \frac{d\rho}{\rho}
    \\
    &-
    C
    \abso{\int_1^{ct^{1/8}} \frac{\sin (2\rho)}{\rho}\, d\rho}
    -
    C
    \abso{\int_1^{ct^{1/8}} \frac{\cos (4\rho)}{\rho}\, d\rho}
    -
    C
    \\
    \geq{}&
    c\log t
\end{align}
for sufficiently large $t$.
Combining above estimates, we obtain
\begin{align}
    \n{U(t,\cdot)}_{L^4}
    \geq
    Ct^{-\frac{3}{2}}(\log t)^{\frac{1}{4}},
\end{align}
which completes the proof.
\end{proof}
    \begin{rem}
        Improving the calculations in the above proof with $2 \leq p <4$, one may prove the following asymptotic expansion of the linear solution.
        \begin{align}
            \lim_{t \to \infty}
            t^{\frac{3}{2}(1-\frac{1}{p})+\frac{1}{2}}(1+|\Om|t)^{\frac{3}{2}(1-\frac{2}{p})}
            \n{\mathcal{T}_\Om(t)u_0 - \mathcal{U}_\Om [u_0](t,\cdot)}_{L^p}
            =0
            \quad
            {\rm for}
            \quad
            2 \leq p < 4,
        \end{align}
        where 
        \begin{align}
            \mathcal{U}_\Om[u_0](t,x)
            :={}&
			\frac{(|\Omega|t)^{\frac{5}{2}}}{(2\pi)^{\frac{3}{2}}}
			\frac{x_3^2}{|\xh|^{\frac{1}{2}}|x|^{\frac{9}{2}}}
			\exp\sp{-\Omega^2t^3\frac{x_3^2}{|x|^4}}
			\left\{ \sin\sp{|\Omega| t\frac{|\xh|}{|x|}-\frac{\pi}{4}}I\right. \\
			&
			\left. +\sgn(\Omega)\cos \sp{|\Omega| t\frac{|\xh|}{|x|}-\frac{\pi}{4}}
			Q(x)\right\}
			M[u_0],
        \end{align}
        where
        \begin{align}
            Q(x)
			&:=
			\frac{1}{|\xh||x|}
			\begin{pmatrix}
				0 & |\xh|^2 & x_2x_3 \\
				-|\xh|^2 & 0 & -x_1x_3 \\
				-x_2x_3 & x_1x_3 & 0
			\end{pmatrix},
			\\
			M[u_0](x)
			&:=
			\sum_{k=1}^2
			\frac{x_k x_3^2}{|\xh||x|^3}
			\int_{\R^3}y_k u_0(y)\, dy
			-
			\frac{|\xh|x_3}{|x|^3}
			\int_{\R^3}y_3 u_0(y)\, dy,
        \end{align}
        provided that $|x| u_0(x) \in L^1(\R^3)$.
        We do not present the precise proof of this expansion since the asymptotic analysis is not our main purpose of this paper.
    \end{rem}

\section{Nonlinear analysis}\label{sec:non}
In this section, we present the proofs of our main results.
We first recall the global well-posedness.
\begin{lemm}\label{lemm:GWP}
    For any $u_0 \in H^{\frac{1}{2}}(\R^3)$ with $\div u_0 = 0$,
    there exists a positive constant $\Omega_0=\Omega_0(u_0)$ suitably large such that if $|\Om| \geq \Om_0$,
    the system \eqref{eq:nonlin} possesses a unique global solution $u$ satisfying
    \begin{align}
        u \in C([0,\infty);H^{\frac{1}{2}}(\R^3)),
        \quad
        \nabla u \in L^2(0,\infty;H^{\frac{1}{2}}(\R^3))
    \end{align}
    and, for every $m \in \N$, it holds $\nabla^m u \in C((0,\infty);H^{\frac{1}{2}}(\R^3))$ with the estimate
    \begin{align}
        \sup_{t>0}
        t^{\frac{m}{2}}\n{\nabla^m u(t)}_{H^{\frac{1}{2}}} < \infty.
    \end{align}
    Moreover if we additionally assume $u_0 \in L^1(\R^3)$,
    then there exists a positive constant $K_{0,m}=K_{0,m}(\n{u_0}_{L^1},\n{u_0}_{\dH^{\frac{1}{2}}})$ such that 
    \begin{align}
        \n{\nabla^mu(t)}_{L^2} \leq K_{0,m}(1+t)^{-\frac{3}{4}}t^{-\frac{m}{2}}
    \end{align}
    for all $t > 0$.
    Furthermore, for $0 < \varepsilon \leq 1$, it holds $K_{0,m}=O_m(\varepsilon)$ if $\n{u_0}_{L^1 \cap \dH^{\frac{1}{2}}}\leq \varepsilon$.
\end{lemm}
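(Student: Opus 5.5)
The statement to be proved is Lemma \ref{lemm:GWP}: global well-posedness for large $|\Om|$, smoothing bounds, and the $L^2$ decay. It is essentially known, and I would prove it by assembling the large-rotation existence theory with an energy/Fourier-splitting decay argument.

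\textbf{Existence.} I would split the datum as $u_0=v_0+w_0$, where $v_0=\mathscr{F}^{-1}[\bm{1}_{\{r\leq|\xi|\leq R\}}\widehat{u_0}]$ has compact Fourier support away from the origin and $w_0$ is small in $\dH^{\frac12}$. This is possible because $u_0\in H^{\frac12}$, with $r,R$ depending on $u_0$.
\begin{itemize}
\item For the linear part $\mathcal{T}_\Om(t)v_0$, I would use the dispersive bound of Lemma \ref{lemm:dis-decay} (the $p=\infty$ case, i.e.\ the Koh--Lee--Takada estimate), combined with heat smoothing. Together with the Strichartz-type bound it yields, this makes a suitable Chemin--Lerner-type critical norm, for instance $L^2(0,\infty;L^\infty)$ or $\widetilde L^1(0,\infty;\dB^{3/p+1}_{p,1})$ with $p>2$, as small as desired once $|\Om|\geq\Om_0(u_0)$. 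The small factor is of order $|\Om|^{-\theta}$ times constants depending on $r,R$.
\item For the part coming from $w_0$, smallness is automatic.
\item A standard fixed-point argument in $\dH^{\frac12}$-energy plus these auxiliary norms then gives the global mild solution. Equivalently, one may solve for the perturbation $w=u-\mathcal{T}_\Om v_0$, which satisfies a Navier--Stokes system with a small drift. An energy estimate in $\dH^{\frac12}$ gives $w\in L^\infty\dH^{\frac12}\cap L^2\dH^{\frac32}$.
\item Uniqueness follows from the same estimates, and the $L^2$ part of $H^{\frac12}$ follows from the standard energy inequality.
\end{itemize}

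\textbf{Higher regularity.} For the bounds $t^{m/2}\n{\nabla^m u}_{H^{1/2}}$ I would use the usual induction on $m$: multiply the equation for $\nabla^m u$ by $t^m$ and run energy estimates in $H^{\frac12}$. The Coriolis term is skew-symmetric and drops out. The nonlinear terms are controlled via the product estimate Lemma \ref{lemm:prod-est} together with the already established $L^2_tH^{\frac32}$ bound; an alternative is to use the mild formulation with Lemma \ref{lemm:heat}.

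\textbf{$L^2$ decay.} When $u_0\in L^1$, I would use Schonbek's Fourier splitting. In the energy inequality
\begin{align}
\frac{d}{dt}\n{u}_{L^2}^2+2\n{\nabla u}_{L^2}^2\leq 0
\end{align}
(the Coriolis term vanishes), the low-frequency part is controlled through the mild formula, since the rotation multiplier is unimodular on $\widehat u$:
\begin{align}
|\widehat u(t,\xi)|\leq |\widehat{u_0}(\xi)|+|\xi|\int_0^t\n{u(s)}_{L^2}^2\,ds .
\end{align}
Here $P_\pm$ are bounded, and $\mathbb{P}$ and the phase are harmless. This gives a first decay rate, which is then bootstrapped to $(1+t)^{-3/4}$.

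The derivative bounds $\n{\nabla^m u(t)}_{L^2}\leq K(1+t)^{-3/4}t^{-m/2}$ then follow from time-weighted energy estimates on $[t/2,t]$, again by induction.

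The dependence $K_{0,m}=O(\varepsilon)$ comes from tracking the linearity of every estimate in the data when the data are small. For $\varepsilon$ small no largeness of $\Om$ is even needed, since the fixed point closes by smallness.

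\textbf{Main obstacle.} The delicate step is making the large-$\Om$ smallness of $\mathcal{T}_\Om v_0$ precise in a norm strong enough to close the nonlinear argument with only $\dH^{\frac12}$ data. I would rely on the established results of Koh--Lee--Takada and Fujii cited in the introduction for this part rather than redo the Strichartz analysis.
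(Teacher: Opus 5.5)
Your proposal is correct and follows essentially the same route as the paper, which omits the proof and simply combines the large-rotation global well-posedness theory (citing Iwabuchi--Takada rather than Koh--Lee--Takada or Fujii) with the Schonbek--Wiegner Fourier splitting method for the $L^2$ decay. Your sketch fills in the details that this citation stands for: the splitting of the data, the dispersive smallness of $\mathcal{T}_\Omega(t)v_0$ for large $|\Omega|$, the unitarity of the rotation multiplier on divergence-free fields in the low-frequency bound, and the bootstrap to $(1+t)^{-3/4}$.
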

We omit the proof of this lemma since it is obtained by the global well-posedness theory of \cite{Iwa-Tak-13} 
and the Fourier splitting method established in \cite{Sch-Wie-96}.

Now we focus on the decay estimates of the nonlinear term given by
\begin{align}
\mathcal{N}_{\Omega}[u](t)
        :=
        \int_0^{t} \mathcal{T}_\Omega(t-\tau)\mathbb{P}\div (u \otimes u)(\tau)\, d\tau . 
\end{align}

\begin{prop}\label{prop:nonl-dec}
    Let $u_0 \in \dot H^{\frac{1}{2}}(\R^3) \cap L^1(\R^3)$ with $\div u_0 = 0$,
    and let $u$ be the associated global solution to \eqref{eq:nonlin} with $|\Omega| \geq \Omega_0$, constructed in Lemma \ref{lemm:GWP}.
    Then, for every $2 \leq p < \infty$ and $ m \in \N \cup \{0\}$, there exists a positive constant $K_{p,m}=K_{p,m}(\n{u_0}_{L^1},\n{u_0}_{\dH^{\frac{1}{2}}})$, independent of $\Om$, such that 
    \begin{align}
        \n{\nabla^m\mathcal{N}_\Om[u](t)}_{L^p}
        &
        \leq 
        K_{p,m}
        t^{-\frac{3}{2}(1-\frac{1}{p})-\frac{m+1}{2}}
        \mathcal{D}_p(|\Om| t), \label{Prop:dec-p}
        \\
        \n{\nabla^m\mathcal{N}_\Om[u](t)}_{L^{4,\infty}}
        &
        \leq 
        K_{4,m}
        t^{-\frac{3}{2}(1-\frac{1}{4})-\frac{m+1}{2}}
        (1+|\Om|t)^{-\frac{3}{4}} \label{Prop:dec-p-44}
    \end{align}
    for all $t>0$.
    Moreover, for the case of $p=\infty$, there exist a positive constant $K_{\infty,m}=K_{\infty,m}(\n{u_0}_{L^1},\n{u_0}_{\dH^{\frac{1}{2}}})$ and an absolute positive constant $C$ such that
    \begin{align}
        \n{\nabla^m\mathcal{N}_\Om[u](t)}_{L^\infty}
        \leq {}&
        K_{\infty,m}
        t^{-\frac{3}{2}-\frac{m+1}{2}}
        \mathcal{D}_\infty(|\Om| t) 
        \\
        &
        +
        Ct^{-\frac{5}{2}-\frac{m}{2}}
        \mathcal{D}_4(|\Om| t)^2
        \n{u}_{X_{\Om,4}^{0}(t)}
        \n{u}_{X_{\Om,4}^{m+1}(t)}^{\frac{1}{2}}
        \n{u}_{X_{\Om,4}^{m+2}(t)}^{\frac{1}{2}}
    \end{align}
    for all $t>0$, where 
    \begin{align}
        \n{u}_{X_{\Om,p}^{m}(t)}
        :=
        \sup_{\frac{t}{2} \leq \tau \leq t}
        \tau^{\frac{3}{2}(1-\frac{1}{p})+\frac{m}{2}}
        \mathcal{D}_p(|\Om|\tau)^{-1}\n{\nabla^m u(\tau)}_{L^p}.
    \end{align}
    Furthermore, it holds $K_{p,m} = O_{p,m}(\varepsilon^2)$ for $2 \leq p \leq \infty$, provided that  $\n{u_0}_{L^1 \cap \dH^{\frac{1}{2}}} \leq \varepsilon$ with $0<\varepsilon \leq 1$.
\end{prop}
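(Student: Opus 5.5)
We split the Duhamel integral at $t/2$:
\begin{align}
\mathcal{N}_\Om[u](t)=\int_0^{t/2}+\int_{t/2}^{t}=:\mathcal{N}_1(t)+\mathcal{N}_2(t),
\end{align}
and estimate the two pieces differently.

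\emph{The piece $\mathcal{N}_1$.} For $0\le\tau\le t/2$ we have $t-\tau\simeq t$, so we apply Proposition~\ref{prop:I-decay}. The operator $\mathbb{P}P_\pm(D)$ is a zeroth order homogeneous multiplier, hence bounded on $\dB^0_{1,\infty}$ (indeed on each dyadic block). We put one derivative on the heat kernel, which gives the extra $t^{-1/2}$:
\begin{align}
\n{\nabla^m\mathcal{T}_\Om(t-\tau)\mathbb{P}\div(u\otimes u)(\tau)}_{L^p}\le C(t-\tau)^{-\frac32(1-\frac1p)-\frac{m+1}2}\mathcal{D}_p(|\Om|(t-\tau))\n{u\otimes u}_{\dB^0_{1,\infty}},
\end{align}
with $\n{u\otimes u}_{\dB^0_{1,\infty}}\le C\n{u(\tau)}_{L^2}^2$. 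Lemma~\ref{lemm:GWP} gives $\n{u(\tau)}_{L^2}^2\le K_{0,0}^2(1+\tau)^{-3/2}$, which is integrable on $(0,\infty)$. Since $\mathcal{D}_p$ is essentially nonincreasing and doubling, $\mathcal{D}_p(|\Om|(t-\tau))\le C\mathcal{D}_p(|\Om|t)$. Together these give the bound $K t^{-\frac32(1-\frac1p)-\frac{m+1}2}\mathcal{D}_p(|\Om|t)$ with $K=O(\varepsilon^2)$. The $L^{4,\infty}$ estimate follows from the same computation.

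\emph{The piece $\mathcal{N}_2$ for $p<\infty$.} Here $t-\tau$ may be small, so we cannot use the $L^1$ dispersive bound with its singular heat factor. Instead we write $\nabla^m\div(u\otimes u)$ via Leibniz and distribute derivatives so that the heat kernel absorbs at most a factor $(t-\tau)^{-1/2}$. We use the $L^2$–$L^p$ version of the dispersive estimate, obtained by interpolating Proposition~\ref{prop:dips-1} with the $L^2$ unitarity: for $2\le p$, the $L^r\to L^p$ bound with $\frac1r=\frac12+\frac1p$, interpolating dyadically between $L^1\to L^p$ and $L^p\to L^p$ type bounds. A simpler alternative, which I would try first, avoids dispersion for the $\Om$-factor on $[t/2,t]$. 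The gain $\mathcal{D}_p(|\Om|t)$ would then have to come from the input rather than from the semigroup. Since a priori we only know $L^2$ decay of $u$, this is circular. I would therefore proceed by bootstrap: take $\n{\nabla^k u(\tau)}_{L^{2}}$ from Lemma~\ref{lemm:GWP}, use Sobolev/Gagliardo–Nirenberg to get $\n{u\otimes u}_{L^1}$-type norms of derivatives, and apply the $L^1\to L^p$ estimate of Proposition~\ref{prop:I-decay} with the Besov smoothing of Lemma~\ref{lemm:heat} and the interpolation of Lemma~\ref{lemm:interp}. Concretely, we bound $\n{\nabla^{m+1}(u\otimes u)(\tau)}_{\dB^{-\theta}_{1,\infty}}$ by interpolating between derivative orders so that the time singularity $(t-\tau)^{-(3/2)(1-1/p)-\theta/2}$ stays integrable near $\tau=t$. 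The resulting factor $(t-\tau)^{-\alpha}$ with $\alpha<1$ integrates to $t^{1-\alpha}$. The $\tau$-decay of $\n{\nabla^k u}_{L^2}^2\sim t^{-3/2-k}$ then supplies the right power of $t$, while $\mathcal{D}_p(|\Om|(t-\tau))$ still needs comparison with $\mathcal{D}_p(|\Om|t)$.

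\emph{The main obstacle} is that last comparison. For $\tau$ near $t$ we have $\mathcal{D}_p(|\Om|(t-\tau))\approx 1$, which is much larger than $\mathcal{D}_p(|\Om|t)$. I would compensate with the excess time decay: $\n{u\otimes u}$ at time $\sim t$ decays like $t^{-3/2}$ in $L^1$, which is stronger than the needed $t^{-1/2}$ by a full $t^{-1}$ factor. Provided $p<\infty$, the worst rate $\mathcal{D}_p\gtrsim(1+|\Om|t)^{-1}$ is beaten as long as $|\Om|\ge1$, and $|\Om|\ge\Om_0$ can be assumed. I would split $[t/2,t]$ at $t-\tau=|\Om|^{-1}$: on the near part use the trivial bound and short integration length, and on the far part use the dispersive bound. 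For $p=\infty$, the $L^1\to L^\infty$ route near $\tau=t$ has a nonintegrable singularity $(t-\tau)^{-3/2-(m+1)/2}$. So we instead use $L^2\to L^\infty$ bounds combined with the already established $L^4$ control $X^{k}_{\Om,4}$ of $u$ and its derivatives, through H\"older and Gagliardo–Nirenberg ($\n{\nabla^{m+1}(u\otimes u)}_{L^2}\lesssim\n{u}_{L^4}\n{\nabla^{m+1}u}_{L^4}$, interpolated with $\nabla^{m+2}u$). This produces exactly the second term with $\mathcal{D}_4(|\Om|t)^2$ and the $X_{\Om,4}$ norms. The $O(\varepsilon^2)$ dependence is inherited from $K_{0,m}=O(\varepsilon)$, since every bound is quadratic in $u$.
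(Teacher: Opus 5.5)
\textbf{There is a genuine gap.} Your treatment of $\int_0^{t/2}$ matches the paper, and so does your treatment of $\int_{t/2}^t$ for $p=\infty$ (heat smoothing, no dispersion, $L^4$ products interpolated between $\nabla^{m+1}u$ and $\nabla^{m+2}u$). The gap is in $\int_{t/2}^t$ for $2<p<\infty$, which is the real content of the proposition. There, the mechanism you propose does not give a constant independent of $\Om$.

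\emph{Excess time decay cannot compensate.} It cannot pay for replacing $\mathcal{D}_p(|\Om|(t-\tau))$ by something of size $1$. Even with no heat singularity, bounding $\mathcal{D}_p\le 1$ on $[t/2,t]$ gives $t(1+t)^{-\frac32}\,t^{-\frac32(1-\frac1p)-\frac{m+1}{2}}$. The interval length eats a factor $t$, so the net gain is only $t(1+t)^{-3/2}$. At $t=1$ this exceeds the target by $\mathcal{D}_p(|\Om|)^{-1}\to\infty$ as $|\Om|\to\infty$. Assuming $|\Om|\ge 1$ only makes the target smaller.

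\emph{The criterion $\alpha<1$ is too weak.} Keeping a heat singularity $(t-\tau)^{-\alpha}$ with merely $\alpha<1$ also fails for large $p$. Write $\mathcal{D}_p(s)\simeq(1+s)^{-\gamma_p}$ (up to a logarithm at $p=4$), with $\gamma_p=\frac32(1-\frac2p)$ or $1-\frac1p$. Then $\int_0^{t/2}s^{-\alpha}\mathcal{D}_p(|\Om|s)\,ds\simeq|\Om|^{\alpha-1}$ whenever $\alpha+\gamma_p>1$ and $|\Om|t\ge 2$. This exceeds the required $t^{1-\alpha}\mathcal{D}_p(|\Om|t)$ by a factor $(|\Om|t)^{\alpha+\gamma_p-1}$. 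The near part $t-\tau\le|\Om|^{-1}$ of your split already produces this $|\Om|^{\alpha-1}$. Moreover, in a $\dB_{1,\infty}$-based version of Proposition~\ref{prop:I-decay}, some $\alpha>0$ is unavoidable, because the low-frequency dyadic sum needs a strictly positive exponent.

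\textbf{What is missing is the correct way to integrate the dispersive factor.} The paper uses no heat smoothing on $[t/2,t]$ and pays for every derivative with the regularity of $u$. Summing Proposition~\ref{prop:dips-1} over dyadic blocks in $\ell^1$ gives
\[
\n{\nabla^m\mathcal{T}_\Om(t-\tau)\mathbb{P}\div F}_{L^p}\le C\,\mathcal{D}_p(|\Om|(t-\tau))\,\n{F}_{\dB^{m+1+3(1-\frac1p)}_{1,1}} .
\]
Lemmas~\ref{lemm:prod-est}, \ref{lemm:interp} and \ref{lemm:GWP} then give $\n{u\otimes u}_{\dB^{m+1+3(1-\frac1p)}_{1,1}}\le C\n{u}_{L^2}\n{\nabla^{m+1}u}_{L^2}^{\frac1p}\n{\nabla^{m+4}u}_{L^2}^{1-\frac1p}\le C(1+\tau)^{-\frac32}\tau^{-\frac{m+1}{2}-\frac32(1-\frac1p)}$.

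The only remaining time integral is $\int_0^{t/2}\mathcal{D}_p(|\Om|s)\,ds\le Ct\,\mathcal{D}_p(|\Om|t)$ (Lemma~\ref{lemm:elem}), and the leftover factor $t(1+t)^{-3/2}$ is bounded. This bound holds precisely because $\gamma_p<1$. That sublinear averaging of $\mathcal{D}_p$, not the lower bound $\mathcal{D}_p\gtrsim(1+|\Om|t)^{-1}$, is where $p<\infty$ enters. At $p=\infty$ the same integral is $|\Om|^{-1}\log(e+|\Om|t)$, which is why that case needs the separate $X_{\Om,4}$ term.

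\textbf{How your route could be repaired.} Impose $\alpha+\gamma_p<1$ and prove the weighted analogue $\int_0^{t/2}s^{-\alpha}\mathcal{D}_p(|\Om|s)\,ds\le Ct^{1-\alpha}\mathcal{D}_p(|\Om|t)$. With $\alpha=0$, your split at $t-\tau=|\Om|^{-1}$ becomes harmless, but its far part is exactly this integral, which your proposal leaves unspecified. You also did not address the $L^{4,\infty}$ bound for this piece. It follows in the same way from the second estimate of Proposition~\ref{prop:dips-1}.
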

To show this proposition, we use the following elementary estimate.
\begin{lemm}\label{lemm:elem}
    For $2 \leq p \leq \infty$, there exists a positive constant $C=C(p)$ such that 
    \begin{align}
        \int_0^t \mathcal{D}_p(|\Om|s)\, ds \leq 
        \begin{cases}
            Ct\mathcal{D}_p(|\Om| t) & (2 \leq p<\infty), \\
            C|\Om|^{-1}\log (e + |\Om| t) & (p=\infty)
        \end{cases}
    \end{align}
    for $t > 0$ and $\Om \in \R \setminus \{0\}$.
\end{lemm}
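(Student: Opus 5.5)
The plan is to substitute $s' = |\Om| s$ in the integral, which gives
\begin{align}
    \int_0^t \mathcal{D}_p(|\Om|s)\, ds = |\Om|^{-1}\int_0^{T} \mathcal{D}_p(\sigma)\, d\sigma, \qquad T:=|\Om| t .
\end{align}
This reduces the lemma to a one-variable estimate. For $2 \leq p<\infty$ it suffices to show
\begin{align}
    \int_0^T \mathcal{D}_p(\sigma)\, d\sigma \leq C T\mathcal{D}_p(T), \qquad T>0.
\end{align}
Indeed, multiplying by $|\Om|^{-1}$ then gives exactly $Ct\mathcal{D}_p(|\Om|t)$. For $p=\infty$ it suffices to show $\int_0^T(1+\sigma)^{-1}d\sigma \leq C\log(e+T)$, and this is immediate since the left side equals $\log(1+T)$.

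For $2\le p<\infty$ I will split into the ranges $T\leq 1$ and $T\geq 1$.

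\emph{The range $T \leq 1$.} Here $\mathcal{D}_p$ is bounded above and below by positive constants on $[0,1]$. Hence both sides are comparable to $T$, and the bound holds.

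\emph{The range $T\geq 1$.} For $2\le p<4$ and $4<p<\infty$, $\mathcal{D}_p(\sigma)=(1+\sigma)^{-\alpha}$ with $\alpha=\frac32(1-\frac2p)$ or $\alpha=1-\frac1p$. In both cases $0\le\alpha<1$: in the first, $\alpha<\frac34$; in the second, $\alpha<1$ exactly because $p<\infty$. Therefore
\begin{align}
    \int_0^T(1+\sigma)^{-\alpha}\,d\sigma\le \frac{(1+T)^{1-\alpha}}{1-\alpha}\le C T(1+T)^{-\alpha}\qquad (T\ge1).
\end{align}
For $p=4$, I will use that $(\log(e+\sigma))^{1/4}$ is increasing, so it can be bounded by its value at $\sigma=T$ and pulled out of the integral. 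The remaining integral is handled by the previous case with $\alpha=\frac34$.

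The only real point to check is that the exponent stays strictly below $1$ for every finite $p$; this is exactly what breaks at $p=\infty$ and produces the logarithm there. Otherwise the argument is routine, and the constant $C$ depends only on $p$ through $(1-\alpha)^{-1}$.
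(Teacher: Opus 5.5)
Your proof is correct and follows the paper's argument: rescale by $|\Om|$, bound $\int_0^T(1+\sigma)^{-\alpha}\,d\sigma$ by $C\,T(1+T)^{-\alpha}$ using $\alpha<1$ for finite $p$, pull the increasing factor $(\log(e+\sigma))^{1/4}$ out at $\sigma=T$ when $p=4$, and compute $\log(1+T)$ directly when $p=\infty$. The only difference is cosmetic. The paper avoids your split into $T\le 1$ and $T\ge 1$ by using the inequality $(1+T)^{1-\alpha}-1\le T(1+T)^{-\alpha}$, which holds for all $T>0$ and gives the constant $(1-\alpha)^{-1}$ directly.
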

\begin{proof}
The case of $p=2$ is trivial. For the case of $p=\infty$, we see that 
\begin{align}
 \int_0^t \sp{1+ |\Om| \tau }^{-1} \, d\tau 
 & = |\Om|^{-1} \int_0^{|\Om| t} \sp{1+s}^{-1} \,ds
 \\
 &
 =|\Om|^{-1} \log \sp{ 1+ |\Om| t } \leq |\Om|^{-1} \log \sp{ e+ |\Om| t }.
\end{align}
Next, we focus on the case $2<p<\infty$. In case of $p \neq 4$, we use the estimate
\begin{align}
    \int_0^t (1+s)^{-\gamma}\, ds = \frac{(1+t)^{1-\gamma}-1}{1-\gamma}
    \leq \frac{t(1+t)^{-\gamma}}{1-\gamma}
\end{align}
for $0<\gamma<1$ and $t>0$.
Let 
\begin{align}
    \gamma_p:=
    \begin{cases}
        \frac{3}{2}(1-\frac{2}{p}) & (2 < p < 4), \\
        1 - \frac{1}{p} & (4 < p < \infty).
    \end{cases}
\end{align}
Then, we have for $p \neq 4$ that
\begin{align}
    \int_0^t \mathcal{D}_p(|\Om| \tau)\, d\tau 
    &=
    |\Om|^{-1}
    \int_0^{|\Om|t}
    (1+s)^{-\gamma_p}\, 
    ds
    \\
    &\leq \frac{t(1+|\Om|t)^{-\gamma_p}}{1-\gamma_p}
    =
    \frac{1}{1-\gamma_p}t\mathcal{D}_p(|\Om| t).
\end{align}
For the case of $p=4$, we have 
\begin{align}
    \int_0^t \mathcal{D}_4(|\Om| \tau)\, d\tau 
    &= 
    |\Om|^{-1}
    \int_0^{|\Om| t}
    (1+s)^{-\frac{3}{4}}(\log (e+s))^{\frac{1}{4}}\, ds
    \\
    &\leq 
    |\Om|^{-1}
    \int_0^{|\Om| t}
    (1+s)^{-\frac{3}{4}}\, ds
    (\log (e+|\Om|t))^{\frac{1}{4}}
    \\
    &\leq
    4t(1+|\Om|t)^{-\frac{3}{4}}(\log (e+|\Om|t))^{\frac{1}{4}}
    =4t\mathcal{D}_4(|\Om| t),
\end{align}
which completes the proof.
\end{proof}

\begin{proof}[Proof of Proposition \ref{prop:nonl-dec}]
    Let us decompose the nonlinear term as 
    \begin{align}
        &
        \mathcal{N}_\Om [u](t)
        =
        \underline{\mathcal{N}}_{\Omega}[u](t)
        +
        \overline{\mathcal{N}}_{\Omega}[u](t),
        \\
        &\quad 
        \underline{\mathcal{N}}_{\Omega}[u](t)
        :=
        \int_0^{\frac{t}{2}} \mathcal{T}_\Omega(t-\tau)\mathbb{P}\div (u \otimes u)(\tau)\, d\tau,
        \\
        &\quad 
        \overline{\mathcal{N}}_{\Omega}[u](t)
        :=
        \int_{\frac{t}{2}}^t \mathcal{T}_\Omega(t-\tau)\mathbb{P}\div (u \otimes u)(\tau)\, d\tau.
    \end{align}
    We see by Theorem \ref{thm:lin-decay} and Lemma \ref{lemm:GWP} that for every $2 \leq p \leq \infty$,
    \begin{align}
        \n{\nabla^m\underline{\mathcal{N}}_{\Om}[u](t)}_{L^{p}}
        &
        \leq 
        C\int_0^{\frac{t}{2}} 
        (t-\tau)^{-\frac{3}{2}(1-\frac{1}{p})-\frac{m+1}{2}}\mathcal{D}_p(|\Om|(t-\tau))
        \n{u(\tau) \otimes u(\tau)}_{L^1}\, d\tau
        \\
        &
        \leq 
        C
        t^{-\frac{3}{2}(1-\frac{1}{p})-\frac{m+1}{2}}\mathcal{D}_p(|\Om| t)
        \int_0^\infty \n{u(\tau)}_{L^2}^2\, d\tau \label{dec-L^p-1}
        \\
        &
        \leq 
        C
        t^{-\frac{3}{2}(1-\frac{1}{p})-\frac{m+1}{2}}\mathcal{D}_p(|\Om| t)
        \int_0^\infty (1+\tau)^{-\frac{3}{2}}\, d\tau
        \\
        &
        =
        C
        t^{-\frac{3}{2}(1-\frac{1}{p})-\frac{m+1}{2}}\mathcal{D}_p(|\Om| t).
    \end{align}
		
    \noindent
    {\it Step 1. The $L^p$ estimate of $\overline{\mathcal{N}}_\Om[u]$ for $2 \leq p < \infty$.}
    Based on Lemma \ref{lemm:GWP}, Proposition \ref{prop:dips-1}, the product estimate from Lemma \ref{lemm:prod-est} and the interpolation inequality from Lemma \ref{lemm:interp}, we infer that 
    \begin{align}
        \n{\nabla^m\overline{\mathcal{N}}_{\Om}[u]}_{L^p}
        &
        \leq
        C\int_\frac{t}{2}^t 
        \mathcal{D}_p(\Omega(t-\tau))
        \n{u(\tau) \otimes u(\tau)}_{\dB_{1,1}^{m+1+3(1-\frac{1}{p})}}\, d\tau
        \\
        &
        \leq
        C\int_\frac{t}{2}^t 
        \mathcal{D}_p(\Omega(t-\tau))
        \n{u(\tau)}_{L^2}
        \n{u(\tau)}_{\dot B_{2,1}^{m+1+3(1-\frac{1}{p})}}\, d\tau
        \\ 
        &
        \leq
        C\int_\frac{t}{2}^t 
        \mathcal{D}_p(\Omega(t-\tau))
        \n{u(\tau)}_{L^2}
        \n{\nabla^{m+1} u(\tau)}_{L^2}^{\frac{1}{p}}
        \n{\nabla^{m+4} u(\tau)}_{L^2}^{1-\frac{1}{p}}
        \, d\tau  
        \\
        &
        \leq
        C
        \int_\frac{t}{2}^t 
        \mathcal{D}_p(\Omega(t-\tau))
        (1+\tau)^{-\frac{3}{2}}\tau^{-\frac{m+1}{2}-\frac{3}{2}(1-\frac{1}{p})}\, d\tau  \label{dec-L^p-2}
        \\
        &
        \leq
        C
        (1+t)^{-\frac{3}{2}}
        t^{-\frac{m+1}{2}-\frac{3}{2}(1-\frac{1}{p})}
        \int_0^\frac{t}{2}
        \mathcal{D}_p(|\Omega|s)
        \, ds
        \\
        &
        \leq
        C
        (1+t)^{-\frac{3}{2}}
        t^{-\frac{m-1}{2}-\frac{3}{2}(1-\frac{1}{p})}
        \mathcal{D}_p(|\Omega| t).
    \end{align}
    Combining \eqref{dec-L^p-1} and \eqref{dec-L^p-2}, we complete the proof of \eqref{Prop:dec-p} with $2 \leq p<\infty$. 
    We emphasize that the same argument, due to the second assertion of Proposition \ref{prop:dips-1}, leads to 
    \begin{align}
      \n{\nabla^m \overline{\mathcal{N}}_{\Om}[u]}_{L^{4,\infty}}
      \leq 
      C
        (1+t)^{  -\frac{3}{2} }
        t^{-\frac{m-1}{2}-\frac{3}{2}(1-\frac{1}{4})} 
        \sp{ 1+ |\Om|t  }^{-\f{3}{4} }. 
    \end{align}
    Similarly, we get
    \begin{align}
     \n{\nabla^m \underline{\mathcal{N}}_{\Om}[u]}_{L^{4,\infty}}
      \leq 
      C
      t^{-\frac{3}{2}(1-\frac{1}{4})-\frac{m+1}{2}} \sp{ 1+ |\Om|t  }^{-\f{3}{4} }.
    \end{align}
    The two estimates above verify \eqref{Prop:dec-p-44}.  
		
    \noindent
    {\it Step 2. The $L^\infty$ estimate of  $\overline{\mathcal{N}}_\Om[u]$.}
    By Proposition \ref{prop:dips-1} and Lemmas \ref{lemm:heat} and \ref{lemm:prod-est}, it holds
    \begin{align}
        &
        \n{\nabla^m\overline{\mathcal{N}}_\Om [u](t)}_{L^\infty}
        \leq{}
        C
        \sum_{\sigma \in \{\pm\}}
        \int_{\frac{t}{2}}^t 
        \n{e^{(t-\tau)\Delta}P_{\sigma}(D)\mathbb{P}\div(u(\tau) \otimes u(\tau))}_{\dB_{2,1}^{m+\frac{3}{2}}}
        \, d\tau 
        \\
        &\quad 
        \leq{}
        C\int_{\frac{t}{2}}^t 
        (t-\tau)^{-\frac{1}{2}}\n{u(\tau) \otimes u(\tau)}_{\dB_{2,\infty}^{m+\frac{3}{2}}}
        \, d\tau 
        \\
        &\quad 
        \leq{}
        C\int_{\frac{t}{2}}^t 
        (t-\tau)^{-\frac{1}{2}}
        \n{u(\tau)}_{L^4}
        \n{u(\tau)}_{\dB_{4,\infty}^{m+\frac{3}{2}}}
        \, d\tau 
        \\
        &\quad 
        \leq{}
        C\int_{\frac{t}{2}}^t 
        (t-\tau)^{-\frac{1}{2}}
        \n{u(\tau)}_{L^4}
        \n{\nabla^{m+1} u(\tau)}_{L^4}^{\frac{1}{2}}
        \n{\nabla^{m+2} u(\tau)}_{L^4}^{\frac{1}{2}}
        \, d\tau 
        \\
        &\quad 
        \leq{}
        C\int_{\frac{t}{2}}^t 
        (t-\tau)^{-\frac{1}{2}}
        \tau^{-3-\frac{m}{2}}\mathcal{D}_4(|\Om|\tau)^2\, d\tau 
        \n{u}_{X_{\Om,4}^{0}(t)}
        \n{u}_{X_{\Om,4}^{m+1}(t)}^{\frac{1}{2}}
        \n{u}_{X_{\Om,4}^{m+2}(t)}^{\frac{1}{2}}
        \\
        &\quad 
        \leq{}
        Ct^{-\frac{5}{2}-\frac{m}{2}}\mathcal{D}_4(\Om t)^2
        \n{u}_{X_{\Om,4}^{0}(t)}
        \n{u}_{X_{\Om,4}^{m+1}(t)}^{\frac{1}{2}}
        \n{u}_{X_{\Om,4}^{m+2}(t)}^{\frac{1}{2}}.
    \end{align}
    Hence, gathering the above estimates, we complete the proof.
\end{proof}
    
Now, we are in a position to prove our main results.
\begin{proof}[Proof of Theorems \ref{main-thm-1} and \ref{thm:main-2}]
Let us first prove the decay rate of small order \eqref{non-dec-L^p}.
It follows from Theorem \ref{thm:lin-decay} and Proposition \ref{prop:nonl-dec} that
for $2 \leq p < \infty$,
\begin{align}
    \n{\nabla^m  u(t)}_{L^p} 
    \leq{}& 
    \n{ \nabla^m \mathcal{T}_\Omega(t)u_0}_{L^p}
    +
    \n{\nabla^m\mathcal{N}_\Om[u](t)}_{L^p} \\
    \leq{}& 
    \n{ \nabla^m \mathcal{T}_\Omega(t)u_0}_{L^p}
    +
    K_{p,m}
    t^{-\frac{3}{2}(1-\frac{1}{p})-\frac{m+1}{2}}
    \mathcal{D}_p(|\Om| t) \label{nonlin:p-small}\\
    ={}& 
    o\sp{t^{-\frac{3}{2}(1-\frac{1}{p})-\frac{m}{2}}
    \mathcal{D}_p(|\Om| t)}
\end{align}
as $t \to \infty$.
On the $L^\infty$ case, we see by Theorem \ref{thm:lin-decay} and Proposition \ref{prop:nonl-dec} that
\begin{align}
    \n{\nabla^m  u(t)}_{L^p} 
    \leq{}& 
    \n{ \nabla^m \mathcal{T}_\Omega(t)u_0}_{L^\infty}
    +
    \n{\nabla^m\mathcal{N}_\Om[u](t)}_{L^\infty} \\
    \leq{}& 
    \n{ \nabla^m \mathcal{T}_\Omega(t)u_0}_{L^\infty}
    + 
    K_{\infty,m}
    t^{-\frac{3}{2}-\frac{m+1}{2}}
    \mathcal{D}_\infty(|\Om| t) \\
    &
    +
    Ct^{-\frac{5}{2}-\frac{m}{2}}
    \mathcal{D}_4(|\Om| t)^2
    \n{u}_{X_{\Om,4}^{0}(t)}
    \n{u}_{X_{\Om,4}^{m+1}(t)}^{\frac{1}{2}}
    \n{u}_{X_{\Om,4}^{m+2}(t)}^{\frac{1}{2}}
    \\
    ={}& 
    o \sp{t^{-\frac{3}{2}-\frac{m}{2}}
    \mathcal{D}_\infty(|\Om| t)}
\end{align}
as $t \to \infty$,
where we have used $\n{u}_{X_{\Om,4}^{0}(t)}
\n{u}_{X_{\Om,4}^{m+1}(t)}^{\frac{1}{2}}
\n{u}_{X_{\Om,4}^{m+2}(t)}^{\frac{1}{2}}=o(1)$ ($t \to \infty$) due to \eqref{nonlin:p-small} with $p=4$.

Next, we focus on the enhanced decay estimates \eqref{enh-non-dec-L^p}, where we additionally assume that $|x|u_0(x)\in L^1(\R^3)$. In case of $2\leq p<\infty$, it follows from Theorem \ref{thm:lin-decay} and \eqref{Prop:dec-p} that 
\begin{align}
    \n{\nabla^m  u(t)}_{L^p} 
    \leq{}& 
    \n{ \nabla^m \mathcal{T}_\Omega(t)u_0}_{L^p}
    +
    \n{\nabla^m\mathcal{N}_\Om[u](t)}_{L^p} \\
    \leq{}& 
    Ct^{-\frac{3}{2}(1-\frac{1}{p})-\frac{m+1}{2}}
    \mathcal{D}_p(|\Omega| t)\n{|x|u_0(x)}_{L^1(\R^3_x)} \label{est:nonlin-Lp}
    \\
    &
    + 
     K_{p,m}
    t^{-\frac{3}{2}(1-\frac{1}{p})-\frac{m+1}{2}}
    \mathcal{D}_p(|\Om| t) \\
    \leq{}& 
    C t^{-\frac{3}{2}(1-\frac{1}{p})-\frac{m+1}{2}}
    \mathcal{D}_p(|\Om| t)
\end{align}
for all $t>0$.
It holds by Theorem \ref{thm:lin-decay}, Proposition \ref{prop:I-decay}, and \eqref{Prop:dec-p-44} that
\begin{align}
    \n{\nabla^m u(t) }_{L^{4,\infty}}
    \leq{}& 
    C \n{ \nabla^m \mathcal{T}_\Omega(t)u_0}_{L^{4,\infty}}
    +
    C\n{\nabla^m\mathcal{N}_\Om[u](t)}_{L^{4,\infty}} 
    \\ 
    \leq{}&
    Ct^{-\frac{3}{2}(1-\frac{1}{4})-\frac{m+1}{2}}(1+|\Om|t)^{-\frac{3}{4}}
    \n{ |x| u_0(x)}_{L^1(\R^3_x)} 
    \\
    &
    +  K_{4,m}
    t^{-\frac{3}{2}(1-\frac{1}{4})-\frac{m+1}{2}}
    (1+|\Om|t)^{-\frac{3}{4}} \\
    \leq{}& 
    C t^{-\frac{3}{2}(1-\frac{1}{4})-\frac{m+1}{2}}(1+|\Om|t)^{-\frac{3}{4}}
\end{align}
for all $t>0$.
Finally, we consider the decay estimates in $L^\infty$.
We have by Proposition \ref{prop:nonl-dec} that
\begin{align}
    \n{\nabla^m  u(t)}_{L^\infty} 
    \leq{}& 
    \n{ \nabla^m \mathcal{T}_\Omega(t)u_0}_{L^\infty}
    +
    \n{\nabla^m\mathcal{N}_\Om[u](t)}_{L^\infty} \\
    \leq{}& 
    Ct^{-\frac{3}{2}-\frac{m+1}{2}}
    \mathcal{D}_\infty(|\Omega| t)\n{|x|u_0(x)}_{L^1(\R^3_x)} 
    \\
    &
    + 
    K_{\infty,m}
    t^{-\frac{3}{2}-\frac{m+1}{2}}
    \mathcal{D}_\infty(|\Om| t) \label{est:nonlin-Linf}\\
    &
    +
    Ct^{-\frac{5}{2}-\frac{m}{2}}
    \mathcal{D}_4(|\Om| t)^2
    \n{u}_{X_{\Om,4}^{0}(t)}
    \n{u}_{X_{\Om,4}^{m+1}(t)}^{\frac{1}{2}}
    \n{u}_{X_{\Om,4}^{m+2}(t)}^{\frac{1}{2}}
    \\
    \leq{}& 
    C t^{-\frac{3}{2}-\frac{m+1}{2}}
    \mathcal{D}_\infty(|\Om| t)
\end{align}
for all $t \geq 1$,
where we have used 
\begin{align}
    \sup_{t>0}
    \sp{
    \n{u}_{X_{\Om,4}^{0}(t)}
    \n{u}_{X_{\Om,4}^{m+1}(t)}^{\frac{1}{2}}
    \n{u}_{X_{\Om,4}^{m+2}(t)}^{\frac{1}{2}}
    } 
    \leq C,
\end{align}
which is implied by \eqref{est:nonlin-Lp}.
Thus, we complete the proof.
\end{proof}

\begin{proof}[Proof of Theorem \ref{Thm:sharp}]
    Let $0<\varepsilon \leq 1$ and 
    \begin{align}
        u_0(x):=\varepsilon e^{-\frac{|x|^2}{4}}x^{\perp}.
    \end{align}
    Then, we see that $\n{u_0}_{L^1 \cap \dot H^{\frac{1}{2}}} \leq C\varepsilon$.
    Choosing $\varepsilon$ sufficiently small, the Fujita--Kato principle \cite{Fuj-Kat-64} implies that $u_0$ generates a global small smooth solution $u$ to \eqref{eq:nonlin} with $\Om=1$.
    From Theorem \ref{Thm:sharp-lin} and the decay estimates of nonlinear term $\mathcal{N}_\Om[u]$ obtained in the proof of Theorem \ref{main-thm-1}, it holds
    \begin{align}
        &
        \begin{aligned}
        \n{u(t)}_{L^p}
        \geq{}& \n{\mathcal{T}_\Om u_0}_{L^p} - \n{\mathcal{N}_\Om [u](t)}_{L^p}
        \\
        \geq{}& c\varepsilon t^{-\frac{3}{2}(1-\frac{1}{p})-\frac{1}{2}}\mathcal{D}_p(t) - C\varepsilon^2 t^{-\frac{3}{2}(1-\frac{1}{p})-\frac{1}{2}}\mathcal{D}_p(t) ,
        \end{aligned}
        \\
        &
        \begin{aligned}
        \n{u(t)}_{L^{4,\infty}}
        \geq{}& \n{\mathcal{T}_\Om u_0}_{L^{4,\infty}} - \n{\mathcal{N}_\Om [u](t)}_{L^{4,\infty}}
        \\
        \geq{}& 
        c\varepsilon t^{-\frac{3}{2}(1-\frac{1}{4})-\frac{1}{2}}(1+t)^{-\frac{3}{4}}
        - 
        C\varepsilon^2 t^{-\frac{3}{2}(1-\frac{1}{4})-\frac{1}{2}}(1+t)^{-\frac{3}{4}}
        \end{aligned}
    \end{align}
    for all sufficiently large $t$. Choosing $\varepsilon$ sufficiently small, we complete the proof.
\end{proof}

\vspace{10mm}

\noindent{{\bf{Acknowledgements}}} 
The work of Mikihiro Fujii was supported by JSPS KAKENHI, Grant Number JP25K17279. 
The work of Yang Li was supported by National Natural Science Foundation of China, Grant number 12571228 and Natural Science Foundation of Anhui Province, Grant number 2408085MA018. 
Jiang Xu is partially supported by National Natural Science Foundation of China, Grant Number 12271250. 
\vspace{4mm}

\noindent{{\bf{Data Availability}}} Data sharing is not applicable to this article as no datasets were generated or analysed
during the current study.

\vspace{4mm}

\noindent{{\bf{Conflicts of interest}}} All authors certify that there are no conflicts of interest for this work.

\appendix

\def\thesection{\Alph{section}}

\end{document}